\documentclass{amsart}
\includeonly{preface,chap1,biblio,index}
\usepackage{amsmath,amscd,amssymb,amsthm,amsbsy,amscd,stmaryrd}
\usepackage[dvips]{graphicx,color}
\usepackage{epsfig}
\usepackage{mathrsfs}
\usepackage{mathpazo}
\input{xy}

\newtheorem{theorem}{Theorem}[section]
\newtheorem{lemma}[theorem]{Lemma}
\newtheorem{proposition}[theorem]{Proposition}

\theoremstyle{definition}

%%%%%%%%%%%%%%%%%%%%%%%%%%%%%%%%%%%%%%%%%%%%%%%%%%%%%%%%%%%%%%%%%%%

\newcommand{\bA}{\boldsymbol{A}}
\newcommand{\bB}{\boldsymbol{B}}

\newcommand{\bT}{\boldsymbol{T}}

\numberwithin{equation}{section}

%%%%%%%%%%%%%%%%%%%%%%%%%%%%%%%%%%%%%%%%%%%%%%%%%%%%%%%%%%%%%%%%%%%%%%%%%%%%%%%%
\begin{document}
\title{Local curvature estimates for the Laplacian flow}
%%%%%%%%%%%%%%%%%%%%%%%%%%%%%%%%%%%%%%%%%%%%%%%%%%%%%%%%%%%%%%%%%%%%%%%%%%%%%%%%
\author{Yi Li}
\address{Faculty of Science, Technology and Communication (FSTC), Mathematic Research Unit\\
Campus Belval, Universite du Luxembourg\\
Maison du Nombre, 6, avenue de la Fonte\\
L-4364, Esch-sur-Alzette, Grand-Duchy of Luxembourg}
\email{yilicms@gmail.com}

%\thanks{Yi Li is partially sponsored by Shanghai Sailing Program (grant) No.
%14YF1401400 and NSFC (grant) No. 11401374.}

\subjclass[2010]{Primary 53C44, 53C10}
\keywords{Laplacian flow, $G_{2}$-structures, local curvature estimates}

\maketitle

%%%%%%%%%%%%%%%%%%%%%%%%%%%%%%%%%%%%%%%%%%%%%%%%%%%%%%%%%%%%%%%%%%%%%%%%%%%%%%%%
\begin{abstract} In this paper we give local curvature estimates for the Laplacian
flow on closed $G_{2}$-structures under the condition that the Ricci
curvature is bounded along the flow. The main ingredient consists of
the idea of Kotschwar-Munteanu-Wang \cite{KMW2016} who gave local curvature estimates for the Ricci flow on complete manifolds and then provided a new elementary proof of Sesum's result \cite{Sesum2005}, and the particular structure of the Laplacian flow on closed $G_{2}$-structures. As an immediate consequence, this estimates give a new proof of Lotay-Wei's \cite{LW2017} result which is an analogue of Sesum's theorem.

%Moreover, this estimate extends Lotay-Wei's result to complete and noncompact case.

The second result is about an interesting evolution equation for the
scalar curvature of the Laplacian flow of closed $G_{2}$-structures.
Roughly speaking, we can prove that the time derivative of the scalar
curvature $R_{t}$ is equal to the Laplacian of $R_{t}$, plus an extra
term which can be written as the difference of two nonnegative quantities.
\end{abstract}

\renewcommand{\labelenumi}{Case \theenumi.} %
\newtheoremstyle{mystyle}{3pt}{3pt}{\itshape}{}{\bfseries}{}{5mm}{} %
\theoremstyle{mystyle} \newtheorem{Thm}{Theorem} \theoremstyle{mystyle} %
\newtheorem{lem}{Lemma} \newtheoremstyle{citing}{3pt}{3pt}{\itshape}{}{%
\bfseries}{}{5mm}{\thmnote{#3}} \theoremstyle{citing} %
\newtheorem*{citedthm}{}

%\tableofcontents

%%%%%%%%%%%%%%%%%%%%%%%%%%%%%%%%%%%%%%%%%%%%%%%%%%%%%%%%%%%%%%%%%%%%%%%%%%%%%%
%%%%%%%%%%%%%%%%%%%%%%%%%%%%%%%%%%%%%%%%%%%%%%%%%%%%%%%%%%%%%%%%%%%%%%%%%%%%%%
\section{Introduction}\label{section1}
%%%%%%%%%%%%%%%%%%%%%%%%%%%%%%%%%%%%%%%%%%%%%%%%%%%%%%%%%%%%%%%%%%%%%%%%%%%%%%
%%%%%%%%%%%%%%%%%%%%%%%%%%%%%%%%%%%%%%%%%%%%%%%%%%%%%%%%%%%%%%%%%%%%%%%%%%%%%%

Let $\mathcal{M}$ be a smooth $7$-manifold. The Laplacian flow for closed $G_{2}$-structures on $\mathcal{M}$ introduced by Bryant \cite{Bryant2005} is to study the torsion-free $G_{2}$-structures
\begin{equation}
\partial_{t}\varphi_{t}=\Delta_{\varphi_{t}}\varphi_{t}
, \ \ \ \varphi_{0}=\varphi,\label{1.1}
\end{equation}
where $\Delta_{\varphi_{t}}\varphi_{t}=dd^{\ast}_{\varphi_{t}}
\varphi_{t}+d^{\ast}_{\varphi_{t}}
d\varphi_{t}$ is the Hodge Laplacian of $g_{\varphi_{t}}$ and $\varphi$ is an initial closed
$G_{2}$-structure. Since $d\partial_{t}\varphi_{t}=\partial_{t}d\Delta_{\varphi_{t}}\varphi_{t}
=0$, we see that the flow (\ref{1.1}) preserves the closedness of
$\varphi_{t}$. For more background on $G_{2}$-structures, see Section \ref{section2}.
When $\mathcal{M}$ is compact, the flow (\ref{1.1}) can be viewed as the gradient flow for the
Hitchin functional introduced by Hitchin \cite{Hitchin2000}
\begin{equation}
\mathscr{H}: [\overline{\varphi}]_{+}\longrightarrow\mathbb{R}^{+}, \ \ \
\varphi\longmapsto\frac{1}{7}\int_{\mathcal{M}}\varphi\wedge\psi=\int_{\mathcal{M}}
\ast_{\varphi}1.\label{1.2}
\end{equation}
Here $\overline{\varphi}$ is a closed $G_{2}$-structure on
$\mathcal{M}$ and $[\overline{\varphi}]_{+}$ is the open subset of the cohomology class $[\overline{\varphi}]$ consisting of
$G_{2}$-structures. Any critical point of $\mathscr{H}$ gives a
torsion-free $G_{2}$-structure.

The study of Laplacian flows on some special $7$-manifolds, Laplacian solitons, and other flows on $G_{2}$-structures can be found in \cite{FFM2015, FY2017, Grigorian2013, Grigorian2016, HWY2017,KMT2012, Lin2013, LW2019, LW2020, WW2012a, WW2012b}.

Recently, Donaldson \cite{D1, D2, D3, D4} studied the co-associative Kovalev-Lefschetz
fibrations $G_{2}$-manifolds and $G_{2}$-manifolds with boundary.

%%%%%%%%%%%%%%%%%%%%%%%%%%%%%%%%%%%%%%%%%%%%%%%%%%%%%%%%%%%%%%%%%%%%%%%%%%%%%%
\subsection{Notions and conventions}\label{subsection1.1}
%%%%%%%%%%%%%%%%%%%%%%%%%%%%%%%%%%%%%%%%%%%%%%%%%%%%%%%%%%%%%%%%%%%%%%%%%%%%%%

To state the main results, we fix our notions used throughout this paper. Let $\mathcal{M}$ be as before a smooth $7$-manifold. The space of smooth functions
and the space of smooth vector fields are denoted respectively
by $C^{\infty}(\mathcal{M})$ and $\mathfrak{X}(\mathcal{M})$. The space of
$k$-tenors (i.e., $(0,k)$-covariant tensor fields) and $k$-forms on $\mathcal{M}$ are denoted, respectively, by $\otimes^{k}(\mathcal{M})
=C^{\infty}(\otimes^{k}(T^{\ast}\mathcal{M}))$ and
$\wedge^{k}(\mathcal{M})=C^{\infty}(\wedge^{k}(T^{\ast}
\mathcal{M}))$. For any $k$-tensor field $\boldsymbol{T}\in\otimes^{k}(\mathcal{M})$, we locally have the
expression $\bT=\bT_{i_{1}\cdots i_{k}}dx^{i_{1}}\otimes\cdots\otimes dx^{i_{k}}
=:\bT_{i_{1}\cdots i_{k}}dx^{i_{1}\otimes\cdots\otimes i_{k}}$. A $k$-form $\alpha$ on $\mathcal{M}$ can be written in the {\it
standard form} as $\alpha=\frac{1}{k!}\alpha_{i_{1}\cdots i_{k}}dx^{i_{1}}\wedge\cdots\wedge
dx^{i_{k}}=:\frac{1}{k!}\alpha_{i_{1}\cdots i_{k}}dx^{i_{1}
\wedge\cdots\wedge i_{k}}$, where $\alpha_{i_{1}\cdots i_{k}}$ is fully skew-symmetric in its
indices. Using the standard
forms, if we take the interior product $X\lrcorner\alpha$ of a $k$-form $\alpha\in\wedge^{k}(\mathcal{M})$ with a vector field $X\in\mathfrak{X}(\mathcal{M})$, we obtain the $(k-1)$-form $X\lrcorner\alpha=\frac{1}{(k-1)!}X^{m}\alpha_{mi_{1}\cdots i_{k-1}}
dx^{i_{1}\wedge\cdots\wedge i_{k-1}}$ which is also in the standard form. In particular, consider the vector space $\otimes^{2}(\mathcal{M})$ of $2$-tensors. For any $2$-tensor $\bA=\bA_{ij}dx^{i\otimes j}$, define $\bA^{\odot}:=\frac{1}{2}(\bA_{ij}+\bA_{ji})dx^{i\otimes j}\equiv\bA^{\odot}_{ij}
dx^{i\otimes j}$ and $\bA^{\wedge}:=\frac{1}{2}(\bA_{ij}-\bA_{ji})
dx^{i\otimes j}\equiv\bA^{\wedge}_{ij}dx^{i\otimes j}$. Then $\bA^{\odot}$ is an element of $\odot^{2}(\mathcal{M})$, the space of
symmetric $2$-tensors. Since\footnote{In our convention, for any $2$-form $\alpha=\frac{1}{2}\alpha_{ij}dx^{ij}$, we have
\begin{equation*}
\alpha\left(\frac{\partial}{\partial x^{k}},
\frac{\partial}{\partial x^{\ell}}\right)
=\frac{1}{2}\alpha_{ij}\left(dx^{i\otimes j}-dx^{j\otimes i}
\right)\left(\frac{\partial}{\partial x^{k}},
\frac{\partial}{\partial x^{\ell}}\right)
=\frac{1}{2}\alpha_{ij}\left(\delta^{i}_{k}\delta^{j}_{\ell}
-\delta^{j}_{k}\delta^{i}_{\ell}\right)=\frac{1}{2}\left(
\alpha_{k\ell}-\alpha_{\ell k}\right)=\alpha_{k\ell}
\end{equation*}
which justifies the notion $\alpha_{k\ell}$ as $\alpha(\partial/\partial x^{k},\partial/\partial x^{\ell})$. In general, for any $k$-form $\alpha=\frac{1}{k!}\alpha_{i_{1}\cdots i_{k}}dx^{i_{1}
\wedge\cdots\wedge i_{k}}$ we have $\alpha_{i_{1}\cdots i_{k}}=\alpha(\partial/\partial x^{i_{1}},\cdots,\partial/\partial x^{i_{k}})$, because $dx^{i_{1}\wedge\cdots\wedge i_{k}}=\sum_{\sigma\in\mathfrak{S}_{k}}{\rm sgn}(\sigma)dx^{i_{\sigma(1)}\otimes \cdots\otimes i_{\sigma(k)}}$.}$dx^{i\wedge j}=dx^{i\otimes j}-dx^{j\otimes i}$, it
follows that $\bA^{\wedge}=\frac{1}{2}\bA_{ij}dx^{i\wedge j}$. Define $\alpha^{\bA}:=\frac{1}{2}\alpha^{\bA}_{ij}dx^{i\wedge j}$ with $\alpha^{\bA}_{ij}:=
\bA_{ij}$. Then we see that $\alpha^{\bA}=\bA^{\wedge}\in\wedge^{2}(\mathcal{M})$ and $\otimes^{2}(\mathcal{M})=\odot^{2}(\mathcal{M})
\oplus\wedge^{2}(\mathcal{M})$.
\\

A given Riemannian metric $g$ on $\mathcal{M}$ determines two
isomorphisms between vector fields and $1$-forms: $\flat_{g}: \mathfrak{X}(\mathcal{M})\longrightarrow\wedge^{1}
(\mathcal{M})$ and $\sharp_{g}: \wedge^{1}(\mathcal{M})\longrightarrow\mathfrak{X}(\mathcal{M})$, where, for every vector field $X=X^{i}\frac{\partial}{\partial x^{i}}$ and $1$-form $\alpha=\alpha_{i}dx^{i}$, $\flat_{g}(X)=X^{i}g_{ij}dx^{j}\equiv
 X_{j}dx^{j}$ and $\sharp_{g}(\alpha)
=\alpha_{i}g^{ij}\frac{\partial}{\partial x^{j}}\equiv\alpha^{j}\frac{\partial}{\partial x^{j}}$. Using these two natural maps, we can frequently raise or lower indices on tensors. The metric $g$ also induces a metric on $k$-forms $g(dx^{i_{1}\wedge\cdots\wedge i_{k}},dx^{j_{1}
\wedge \cdots\wedge j_{k}})=
\det(g(dx^{i_{a}},dx^{j_{b}}))=\sum_{\sigma\in\mathfrak{S}_{7}}{\rm sgn}(\sigma)g^{i_{1}j_{\sigma(1)}}
\cdots g^{i_{k}j_{\sigma(k)}}$ where $\mathfrak{S}_{7}$ is the group of permutations of seven letters and ${\rm sgn}(\sigma)$ denotes the sign $(\pm1)$ of an element $\sigma$ of $\mathfrak{S}_{7}$. The inner product $\langle\cdot,
\cdot\rangle_{g}$ of two $k$-forms $\alpha,\beta\in\wedge^{k}
(\mathcal{M})$ now is given by $\langle\alpha,\beta\rangle_{g}=\frac{1}{k!}
\alpha_{i_{1}\cdots i_{k}}\beta^{i_{1}\cdots i_{k}}
=\frac{1}{k!}\alpha_{i_{1}\cdots i_{k}}\beta_{j_{1}\cdots j_{k}}
g^{i_{1}j_{1}}\cdots g^{i_{k}j_{k}}$.

Given two $2$-tensors $\bA, \bB\in
\otimes^{2}(\mathcal{M})$, with the forms $\bA=\bA_{ij}dx^{i\otimes j}$ and $\bB=\bB_{ij}dx^{i\otimes j}$. Define $\langle\langle\bA, \bB\rangle\rangle_{g}:=\bA_{ij}\bB^{ij}$. There are two special cases which will be used later:

\begin{itemize}

\item[(1)] $\alpha=\frac{1}{2}\alpha_{ij}dx^{i\wedge j}\in\wedge^{2}(\mathcal{M})$
and $\bB=\bB_{ij}dx^{i\otimes j}\in\otimes^{2}(\mathcal{M})$. In this case, $\alpha$ can be written as a $2$-tensor $\bA^{\alpha}=\bA^{\alpha}_{ij}dx^{i\otimes j}$ with $\bA^{\alpha}_{ij}
=\alpha_{ij}$. Then $\langle\langle\alpha,\bB\rangle\rangle_{g}:=\langle\langle\bA^{\alpha},
\bB\rangle\rangle_{g}=\alpha_{ij}\bB^{ij}$.

\item[(2)] $\alpha=\frac{1}{2}\alpha_{ij}dx^{i\wedge j}$ and $\beta=\frac{1}{2}\beta_{ij}
dx^{i\wedge j}\in\wedge^{2}(\mathcal{M})$. In this case, $\alpha, \beta$ can be both written as $2$-tensors $\bA^{\alpha}=\bA^{\alpha}_{ij}
dx^{i\otimes j}$ and $\bB^{\beta}=\bB^{\beta}_{ij}dx^{i\otimes j}$ with $\bA^{\alpha}_{ij}
=\alpha_{ij}$ and $\bB^{\beta}_{ij}=\beta_{ij}$. Then $\langle\langle\alpha,\beta\rangle\rangle_{g}:=\langle\langle\bA^{\alpha},
\bB^{\beta}\rangle\rangle_{g}
=\alpha_{ij}\beta^{ij}=2\langle\alpha,\beta\rangle_{g}$.

\end{itemize}

The norm of $\bA\in\otimes^{2}(\mathcal{M})$ is defined by $||\bA||^{2}_{g}:=\langle\langle\bA,\bA\rangle\rangle_{g}
=\bA_{ij}\bA^{ij}$, while the norm of $\alpha\in\wedge^{k}
(\mathcal{M})$ is $|\alpha|^{2}_{g}:=\langle\alpha,\alpha\rangle_{g}
=\frac{1}{k!}\alpha_{i_{1}\cdots i_{k}}\alpha^{i_{1}\cdots i_{k}}$. In particular, $||X||^{2}_{g}=X_{i}X^{i}=|\flat_{g}(X)|^{2}_{g}$ and $||\alpha||^{2}_{g}=2|\alpha|^{2}_{g}$, for any vector field $X\in\mathfrak{X}(\mathcal{M})$ and
$2$-form $\alpha$.
\\

The Levi-Civita connection associated to a given Riemannian metric $g$ is denoted by ${}^{g}\nabla$ or simply $\nabla$. Our convention on Riemann curvature tensor is $R^{m}_{ijk}\frac{\partial}{\partial x^{m}}$ $:={\rm Rm}(\frac{\partial}{\partial x^{i}},\frac{\partial}{\partial x^{j}}
)\frac{\partial}{\partial x^{k}}=(\nabla_{i}\nabla_{j}
-\nabla_{j}\nabla_{i})\frac{\partial}{\partial x^{k}}$ and $R_{ijk\ell}:=R^{m}_{ijk}g_{m\ell}$. The Ricci curvature of $g$ is given by $R_{jk}:=R_{ijk\ell}g^{i\ell}$. We use $dV_{g}$ and $\ast_{g}$ to denote the volume form and Hodge
star operator, respectively, on $\mathcal{M}$ associated to a metric $g$ and an orientation.
\\

We use the standard notion $A\ast B$ to denote some linear combination
of contractions of the tensor product $A\otimes B$ relative to the
metric $g_{t}$ associated the $\varphi_{t}$. In Theorem \ref{t1.4}
and its proof, all universal constants
$c, C$ below depend only on the given real number $p$.

%%%%%%%%%%%%%%%%%%%%%%%%%%%%%%%%%%%%%%%%%%%%%%%%%%%%%%%%%%%%%%%%%%%%%%%%%%%%%%
\subsection{Main results}\label{subsection1.2}
%%%%%%%%%%%%%%%%%%%%%%%%%%%%%%%%%%%%%%%%%%%%%%%%%%%%%%%%%%%%%%%%%%%%%%%%%%%%%%

Applying De Turck's trick and Hamilton's
Nash-Moser inverse function theorem, Bryant and Xu \cite{Bryant-Xu2011} proved the following
local time existence for (\ref{1.1}).

\begin{theorem}\label{t1.1}{\bf (Bryant-Xu \cite{Bryant-Xu2011})} For a compact $7$-manifold $\mathcal{M}$, the initial value problem (\ref{1.1}) has a unique solution for a short time interval $[0,T_{\max})$ with the maximal time $T_{\max}\in(0,\infty]$ depending on $\varphi$.
\end{theorem}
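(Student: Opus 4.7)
The plan is to mimic DeTurck's trick as it is applied to the Ricci flow, adapted to the Laplacian flow of closed $G_{2}$-structures. The first step is to analyze the linearization of the map $\varphi\mapsto\Delta_{\varphi}\varphi$ acting on the open set of $G_{2}$-structures inside $\wedge^{3}(\mathcal{M})$. Since $g_{\varphi}$ and hence $\Delta_{\varphi}$ depend nonlinearly on $\varphi$, the symbol of this operator at a point $\xi\in T^{\ast}\mathcal{M}$ splits the fiber $\wedge^{3}_{x}\mathcal{M}$ into a piece on which it is strictly negative definite (of order $2$ in $\xi$) and a kernel coming from the infinitesimal action of $\mathrm{Diff}(\mathcal{M})$, i.e. from terms of the form $\mathcal{L}_{X}\varphi$. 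This diffeomorphism invariance is exactly the obstruction to applying standard quasilinear parabolic theory directly to (\ref{1.1}).

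The second step is to cancel this degeneracy by a DeTurck-type vector field. Fix a background torsion-free reference connection on $T\mathcal{M}$ and form a vector field $V(\varphi)$ whose expression in local coordinates is built from first derivatives of $g_{\varphi}$ against the reference connection (the analogue of the usual DeTurck field in Ricci flow). Consider the modified flow
\begin{equation*}
\partial_{t}\tilde{\varphi}_{t}=\Delta_{\tilde{\varphi}_{t}}\tilde{\varphi}_{t}
+\mathcal{L}_{V(\tilde{\varphi}_{t})}\tilde{\varphi}_{t},\qquad
\tilde{\varphi}_{0}=\varphi.
\end{equation*}
By the symbol computation of the previous step, the right-hand side is strictly elliptic on $\wedge^{3}(\mathcal{M})$ along closed $G_{2}$-structures, so the modified flow is strictly parabolic. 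The third step is to produce a short-time solution of this modified equation on the compact manifold $\mathcal{M}$ by Hamilton's Nash--Moser inverse function theorem in the tame Fr\'echet category, which handles the loss-of-derivatives that arises because $\varphi\mapsto g_{\varphi}$ is only a smooth nonlinear map between sections of tensor bundles (this is where I expect the main technical work to lie, and where standard Banach-space quasilinear theory does not directly apply).

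The fourth step is to recover a solution of (\ref{1.1}) from $\tilde{\varphi}_{t}$. Let $\psi_{t}:\mathcal{M}\to\mathcal{M}$ be the family of diffeomorphisms generated by $-V(\tilde{\varphi}_{t})$ with $\psi_{0}=\mathrm{id}$ (these exist on $[0,T_{\max})$ since $\mathcal{M}$ is compact) and set $\varphi_{t}:=\psi_{t}^{\ast}\tilde{\varphi}_{t}$. A direct computation using naturality of $\Delta$ and the Cartan formula gives $\partial_{t}\varphi_{t}=\psi_{t}^{\ast}(\Delta_{\tilde{\varphi}_{t}}\tilde{\varphi}_{t})=\Delta_{\varphi_{t}}\varphi_{t}$, and closedness is preserved by pullback, so $\varphi_{t}$ solves (\ref{1.1}). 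The final step is uniqueness: given two solutions of (\ref{1.1}) with the same initial datum, solve a harmonic-map-type ODE (generated by $V$) to gauge each of them into a solution of the strictly parabolic DeTurck flow, then invoke uniqueness for the modified equation from step three. The main obstacle throughout is the loss of one derivative inherent in reconstructing $g_{\varphi}$, $V(\varphi)$ and $\Delta_{\varphi}\varphi$ from $\varphi$ itself, which is precisely what the Nash--Moser machinery is designed to absorb.
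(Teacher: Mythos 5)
The paper does not prove Theorem~\ref{t1.1} at all; it is stated and cited as Bryant--Xu's result, with only the one-line remark that it follows from ``De Turck's trick and Hamilton's Nash--Moser inverse function theorem.'' Your DeTurck-plus-Nash--Moser outline therefore matches the strategy the paper attributes to \cite{Bryant-Xu2011} at the level of headlines, but the load-bearing step in your sketch is wrong, and the error is precisely the one whose correct resolution is the content of Bryant--Xu's proof.

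You assert in step two that after adding the DeTurck term the right-hand side becomes ``strictly elliptic on $\wedge^{3}(\mathcal{M})$,'' so the modified flow is strictly parabolic; but if that were so you would not need Nash--Moser at all, since a strictly parabolic quasilinear system on a compact manifold has short-time existence by standard Banach-space theory. In fact the DeTurck modification only cancels the degeneracy coming from $\mathrm{Diff}(\mathcal{M})$; the principal symbol of $\dot{\varphi}\mapsto D\bigl(\Delta_{\varphi}\varphi+\mathcal{L}_{V(\varphi)}\varphi\bigr)\dot{\varphi}$ remains merely nonnegative on $\wedge^{3}$ and is definite only on the subspace of closed variations. Since the Laplacian flow lives on the constraint set $\{d\varphi=0\}$, one is solving a PDE on a closed subspace of a Fr\'echet space, and it is this constrained/degenerate structure --- not any loss of derivatives in $\varphi\mapsto g_{\varphi}$, which is a purely algebraic (zeroth-order) map --- that forces one outside standard quasilinear parabolic theory. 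Bryant--Xu handle it with Hamilton's Nash--Moser machinery; Lotay--Wei \cite{LW2017} later gave a more elementary route by rewriting the DeTurck flow as a genuinely strictly parabolic system on the affine space $\varphi_{0}+d\Omega^{2}(\mathcal{M})$ of exact perturbations, to which standard parabolic existence and uniqueness apply directly. Your step four (pullback by the flow of $-V$) and step five (DeTurck/harmonic-map gauge-fixing for uniqueness) are standard and fine once existence for the modified flow is in hand, but as written step two would not go through, and the stated reason for invoking Nash--Moser is not the actual obstruction.
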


As in the Ricci flow, we can prove following results on the long
time existence for the Laplacian flow (\ref{1.1}).

\begin{theorem}\label{t1.2}{\bf (Lotay-Wei \cite{LW2017})} Let $\mathcal{M}$ be a compact $7$-manifold and $\varphi_{t}$, $t\in[0,T)$, where $T<\infty$, be a solution to the flow (\ref{1.1}) for closed $G_{2}$-structures with associated metric $g_{t}=g_{\varphi_{t}}$ for each $t$.

\begin{itemize}

\item[(a)] If the velocity of the flow satisfies
\begin{equation*}
\sup_{\mathcal{M}\times[0,T)}||\Delta_{t}\varphi_{t}
||_{t}<\infty,
\end{equation*}
then the solution $\varphi_{t}$ can be extended past time $T$.

\item[(b)] If $T=T_{\max}$, then
\begin{equation*}
\lim_{t\to T_{\max}}\sup_{\mathcal{M}}\left(||{\rm Rm}_{t}||^{2}_{t}
+||\nabla_{t}\bT_{t}||^{2}_{t}\right)=\infty.
\end{equation*}
Here $\bT_{t}$ is the torsion of $\varphi_{t}$ (see (\ref{2.14})).
\end{itemize}

\end{theorem}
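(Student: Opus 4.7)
The plan is to prove part (b) first and then deduce part (a) from it; in both cases the conclusion is extracted by contradiction with Bryant--Xu's short--time existence (Theorem \ref{t1.1}).

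For part (b), I would argue by contrapositive: assume $\sup_{[0,T_{\max})}(\|\mathrm{Rm}_t\|_t^{2}+\|\nabla_{t}\bT_{t}\|_t^{2})<\infty$ and show the flow extends past $T_{\max}$. The steps are: (i) For closed $G_2$-structures, Bryant's structural identity expresses $\mathrm{Ric}(g_t)$ as a quadratic expression in $\bT_t$ plus a first-order expression in $\nabla_t\bT_t$, so $\|\mathrm{Ric}(g_t)\|_t$ is uniformly bounded. (ii) The evolution $\partial_t g_t = -2\,\mathrm{Ric}(g_t)+\bT_t\ast\bT_t$ then forces uniform metric equivalence $C^{-1}g_{0}\le g_t\le C\,g_{0}$ on $[0,T_{\max})$. (iii) Derive Shi-type estimates for the Laplacian flow, i.e.\ inequalities of the form $\partial_t\|\nabla^{k}\mathrm{Rm}_t\|^{2}\le \Delta_t\|\nabla^{k}\mathrm{Rm}_t\|^{2}+C_k$ together with the analogues for $\|\nabla^{k}\bT_t\|^{2}$; induct to bound all higher covariant derivatives on $[\varepsilon,T_{\max})$. (iv) These uniform $C^{\infty}$ bounds, together with the metric equivalence, yield a smooth limit $\varphi_{T_{\max}}$, and Theorem \ref{t1.1} restarted at $T_{\max}$ extends the solution, contradicting maximality.

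For part (a), assume $\sup_{\mathcal{M}\times[0,T)}\|\Delta_{t}\varphi_{t}\|_{t}<\infty$. I would show this implies the hypothesis of part (b). The key observation is that for closed $G_2$-structures $d\varphi_t=0$ and $d^{\ast}_{t}\varphi_{t}$ is (up to sign) the torsion two-form, so the velocity $\Delta_t\varphi_t=dd^{\ast}_{t}\varphi_t$ directly controls one exterior derivative of the torsion. Combining this with the evolution equation for $\bT_t$ (schematically $\partial_{t}\bT_{t}=\Delta_{t}\bT_{t}+\mathrm{Rm}_t\ast\bT_t+\bT_t\ast\bT_t\ast\bT_t$), a maximum principle applied to $\|\bT_t\|^{2}$ gives a uniform bound on $\bT_t$, and hence on $\mathrm{Ric}(g_t)$ via Bryant's identity. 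A parabolic bootstrap — using the evolution of $\|\nabla_t\bT_t\|^{2}$ with the Bochner-type structure of the Laplacian flow — then controls $\|\nabla_t\bT_t\|^{2}$ and $\|\mathrm{Rm}_t\|^{2}$, putting us into the setting of part (b).

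The main obstacle is step (iii), the Shi-type estimates: the commutator $[\partial_t,\nabla_t]$ for the Laplacian flow is more complicated than for Ricci flow because it involves both $\mathrm{Ric}(g_t)$ and the torsion $\bT_t$, and the second Bianchi identity used in the Ricci-flow analogue must be replaced by the $G_2$ Bianchi-type identity tying $\mathrm{Rm}$, $\nabla\bT$ and $\bT\ast\bT$ together. A second delicate point lies in part (a): promoting the bound on $d\bT_t$ coming from the velocity to a genuine $L^{\infty}$ bound on $\bT_t$ itself is not algebraic and requires exploiting the closedness of $\varphi_t$ jointly with the specific parabolic structure of the torsion evolution for \emph{closed} $G_2$-structures.
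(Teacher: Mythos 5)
Your overall logical scaffold --- establish part (b), then use it (in contrapositive form) to close the argument for part (a) --- matches the paper's: the paper also invokes part (b) as a black box, cited from Lotay--Wei, when deducing (a). Where the two diverge, and where your proposal has a genuine gap, is the step in part (a) from a bounded velocity to a bounded full curvature tensor.

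Concretely, the paper observes that the passage from bounded $\|\Delta_t\varphi_t\|_t$ to bounded $\|{\rm Ric}_t\|_t$ and bounded $|\bT_t|_t$ is purely \emph{algebraic}: identity (\ref{3.18}) gives
$|\Delta_t\varphi_t|^2_t=\tfrac{16}{9}|\bT_t|^4_t+2\|{\rm Sic}_t\|^2_t$ with ${\rm Sic}_t={\rm Ric}_t+\tfrac{2}{3}|\bT_t|^2_tg_t+2\widehat{\bT}_t$, so a uniform bound on the velocity instantly bounds $|\bT_t|$ and hence $\|{\rm Ric}_t\|_t$. Your proposed route via a maximum principle for $\|\bT_t\|^2$ is both unnecessary and does not in fact close: the reaction terms in $\blacksquare_t\|\bT_t\|^2$ (cf.\ (\ref{3.35}) and Proposition \ref{p3.1}) contain $\|{\rm Rm}_t\|\!\ \|\bT_t\|^2$, so the maximum principle already presupposes the curvature bound you are trying to produce. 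The more serious problem is your phrase ``a parabolic bootstrap \ldots then controls $\|\nabla_t\bT_t\|^2$ and $\|{\rm Rm}_t\|^2$.'' Upgrading a Ricci bound to a full ${\rm Rm}$ bound is \emph{not} a bootstrap; it is the Laplacian-flow analogue of Sesum's theorem for Ricci flow, and it cannot be obtained from the evolution of $\|{\rm Rm}_t\|^2$ by a Bochner or maximum-principle argument, since that evolution contains $\nabla^2_t{\rm Ric}_t\ast{\rm Rm}_t$ and cubic ${\rm Rm}$ terms that a Ricci bound alone does not control. Supplying exactly this implication is the entire content of the paper's Theorems \ref{t1.3} and \ref{t1.4}: the Kotschwar--Munteanu--Wang-style local integral estimates occupying Section \ref{subsection3.3}. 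As written, your part (a) silently assumes the paper's main theorem.

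Two smaller remarks. Once $\|{\rm Rm}_t\|_t$ is bounded, the bound on $\|\nabla_t\bT_t\|_t$ is again algebraic from the $G_2$ identity $\nabla\bT={\rm Rm}\ast\varphi+\bT\ast\bT\ast\varphi$ (see (\ref{3.61}) and (\ref{3.63})); no separate estimate is needed. And your sketch of part (b) --- Ricci bound from Bryant's formula, metric equivalence, Shi-type derivative estimates, compactness, restart via Theorem \ref{t1.1} --- is in effect an outline of Lotay--Wei's original proof, which the paper deliberately does not reproduce.
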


In this paper, we give a new elementary proof of Theorem \ref{t1.2},
based on the idea of \cite{KMW2016} and the structure of the equation (\ref{1.1}).

\begin{theorem}\label{t1.3} Let $\mathcal{M}$ be a compact $7$-manifold and $
\varphi_{t}$, $t\in[0,T)$, where $T<\infty$, be a solution to
the flow (\ref{1.1}) for closed $G_{2}$-structures with associated
metric $g_{t}=g_{\varphi_{t}}$ for each $t$. Suppose that 
%$(\mathcal{M}, g_{0})$ is complete and
\begin{equation*}
K:=\sup_{\mathcal{M}\times[0,T)}
||{\rm Ric}_{t}||_{t}<\infty, \ \ \ \Lambda:=\sup_{\mathcal{M}}
||{\rm Rm}_{0}||_{0}.
\end{equation*}
Then
\begin{equation*}
\sup_{\mathcal{M}\times[0,T)}||{\rm Rm}_{t}||_{t}<\infty,
\end{equation*}
where the bound depends only on $n, K, T$ and $\Lambda$.

%and $g_{t}$ extends smoothly to a complete solution on $[0,T+\epsilon)$ for some $\epsilon>0$ depending only on $K, T$, and $\Lambda$.
\end{theorem}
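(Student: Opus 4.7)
The approach follows the Kotschwar--Munteanu--Wang strategy for the Ricci flow, modified to handle the additional torsion terms intrinsic to the Laplacian flow. The crucial preliminary observation is that for a closed $G_{2}$-structure, Bryant's identity expresses the scalar curvature as $R = -\tfrac{1}{2}||\bT||^{2}$, so the assumed Ricci bound $K$ already forces a uniform bound on $||\bT_{t}||_{t}$. Combined with the evolution equation of the induced metric under (\ref{1.1}), which has the schematic form $\partial_{t}g_{t} = -2\,{\rm Ric}_{t} + \bT_{t}\ast\bT_{t}$, this yields uniform equivalence of the metrics $g_{t}$ over $[0,T)$ on the compact manifold $\mathcal{M}$, so that $t$-dependent inner products and norms can be compared to their $t = 0$ counterparts up to multiplicative constants depending only on $K$ and $T$.

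The core of the proof consists of establishing and combining evolution inequalities for $||{\rm Rm}_{t}||^{2}$ and $||\nabla_{t}\bT_{t}||^{2}$. Direct computation under (\ref{1.1}) yields inequalities of the schematic form
\begin{align*}
(\partial_{t}-\Delta_{t})||{\rm Rm}_{t}||^{2} &\leq -2||\nabla_{t}{\rm Rm}_{t}||^{2} + C\bigl(||{\rm Rm}_{t}||^{3} + ||{\rm Rm}_{t}||\cdot||\nabla_{t}\bT_{t}||^{2}+\text{l.o.t.}\bigr), \\
(\partial_{t}-\Delta_{t})||\nabla_{t}\bT_{t}||^{2} &\leq -2||\nabla^{2}_{t}\bT_{t}||^{2} + C\bigl(||{\rm Rm}_{t}||\cdot||\nabla_{t}\bT_{t}||^{2}+\text{l.o.t.}\bigr),
\end{align*}
where the Ricci/torsion bound has already absorbed the lower-order $\bT$-factors into constants. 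Following the KMW scheme, I would then consider a pinched quantity of the form
$$F_{t} := \bigl(A + ||{\rm Rm}_{t}||^{2}\bigr)\bigl(1 + ||\nabla_{t}\bT_{t}||^{2}\bigr)^{\alpha},$$
with positive constants $A,\alpha$ tuned so that the cross-contributions in $(\partial_{t}-\Delta_{t})F_{t}$ can be absorbed by the negative gradient terms $-||\nabla{\rm Rm}||^{2}$ and $-||\nabla^{2}\bT||^{2}$ after Cauchy--Schwarz, producing a closed differential inequality $(\partial_{t}-\Delta_{t})F_{t} \leq CF_{t}^{1+\beta}$ for some $\beta > 0$.

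The main obstacle is precisely this absorption step: the Laplacian flow couples $\nabla{\rm Rm}$ and $\nabla^{2}\bT$ in ways absent from the pure Ricci flow setting, and $\alpha$ must be chosen so that the cubic ${\rm Rm}$-term and the quadratic $\nabla\bT$-term balance without either dominating. Once such an inequality is in hand, the parabolic maximum principle on the compact $\mathcal{M}$ together with the initial bound $\Lambda$ controlling $F_{0}$ integrates over the finite interval $[0,T)$ to give a uniform-in-$t$ bound on $\max_{\mathcal{M}}F_{t}$, which transfers to the desired bound on $||{\rm Rm}_{t}||_{t}$. A new proof of Theorem \ref{t1.2}(b) is then immediate, since bounded Ricci now yields simultaneously bounded ${\rm Rm}$ and bounded $\nabla\bT$, contradicting the blow-up criterion at any finite-time breakdown of $(\ref{1.1})$.
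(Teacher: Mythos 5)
Your proposal names the Kotschwar--Munteanu--Wang strategy but does not actually implement it; in the end you fall back on a pinched-quantity maximum-principle argument, which is precisely the approach that KMW's method was designed to circumvent. The central gap is in the final step: you arrive at a differential inequality of the form $(\partial_{t}-\Delta_{t})F_{t}\le C F_{t}^{1+\beta}$ with $\beta>0$ and claim that the parabolic maximum principle ``integrates over the finite interval $[0,T)$ to give a uniform-in-$t$ bound.'' It does not. A superlinear Riccati-type inequality $\frac{d}{dt}\max_{\mathcal{M}}F_{t}\le C(\max_{\mathcal{M}}F_{t})^{1+\beta}$ only controls $F_{t}$ up to a time $t_{*}$ depending on $F_{0}$ and $C$, and there is no reason $t_{*}\ge T$ for an arbitrary prescribed $T<\infty$. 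This is exactly the obstruction in the Ricci flow setting: from $\lvert{\rm Ric}\rvert\le K$ one gets pointwise $(\partial_{t}-\Delta_{t})\lVert{\rm Rm}\rVert\le c\lVert{\rm Rm}\rVert^{2}$ (the paper's (1.5)), but the maximum principle alone cannot close this Riccati inequality. Tuning the exponent $\alpha$ in $F_{t}=(A+\lVert{\rm Rm}\rVert^{2})(1+\lVert\nabla\bT\rVert^{2})^{\alpha}$ cannot help either, since $\lVert\nabla\bT\rVert$ is itself controlled pointwise by $\lVert{\rm Rm}\rVert$ (the paper's (3.63)), so the pinching factor contributes no new smallness: the cubic term $\lVert{\rm Rm}\rVert^{3}$ survives as the dominant nonlinearity.

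The paper (following KMW, see Theorem~\ref{t1.4} and Lemmas~\ref{l3.3}--\ref{l3.6}) proves instead a local $L^{p}$ integral estimate for $\int\lVert{\rm Rm}\rVert^{p}\eta^{2p}\,dV$ on shrinking geodesic balls, established by integration by parts with a cutoff $\eta$; the Ricci bound enters through repeated integration by parts that trades $\nabla^{2}{\rm Ric}$ for $\nabla{\rm Ric}\ast\nabla{\rm Rm}$ and then absorbs derivatives via weighted Cauchy--Schwarz. With the $L^{p}$ bound in hand, one then uses the sub-parabolic inequality (1.5) as the structural hypothesis for a De Giorgi--Nash--Moser iteration to upgrade to a pointwise bound (1.4), from which Theorem~\ref{t1.3} follows. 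So the iteration, not the maximum principle, is the mechanism that converts the $L^{p}$ information into a sup bound. Your preliminary reductions (metric equivalence on $[0,T]$, scalar curvature controlled by the torsion squared — though note the paper's (2.26) gives $R=-\lVert\bT\rVert^{2}$, not $-\tfrac{1}{2}\lVert\bT\rVert^{2}$) are correct and are used in the paper, but they do not substitute for the integral estimate; they only set the stage. Also a small slip: Theorem~\ref{t1.3} recovers part~(a) of Theorem~\ref{t1.2}, not part~(b), which is an independent blow-up criterion that the paper takes as input.
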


When $\mathcal{M}$ is compact, the theorem immediately implies
the part (a) in Theorem \ref{t1.2}. Indeed, we shall show that (see (\ref{3.18}) and (\ref{3.37}))
\begin{equation*}
\sup_{\mathcal{M}
\times[0,T)}||\Delta_{t}
\varphi_{t}||_{t}<\infty\Longleftrightarrow\sup_{\mathcal{M}
\times[0,T)}||{\rm Ric}_{t}||_{t}<\infty.
\end{equation*}
In the compact case, Theorem \ref{t1.3} shows that, if the conclusion in part (a) does not hold, then $T=T_{\max}$ and $\sup_{\mathcal{M}
\times[0,T_{\max})}||{\rm Rm}_{t}||_{t}<\infty$ which implies $\sup_{
\mathcal{M}\times[0,T_{\max})}$ $
(||{\rm Rm}_{t}||^{2}_{t}+||\nabla_{t}\bT_{t}||^{2}_{t})<\infty$, since the norm $||\nabla_{t}\bT_{t}||^{2}_{t}$ can be controlled by
$||{\rm Rm}_{t}||^{2}_{t}$ (see (\ref{3.63})). However, by part (b) in Theorem \ref{t1.2},
it is impossible. Therefore, the conclusion in part (a) is true.

As remarked in \cite{KMW2016}, to prove Theorem \ref{t1.3}, it suffices to
establish the following integral estimate.

\begin{theorem}\label{t1.4} Let $\mathcal{M}$ be a smooth $7$-manifold and $\varphi_{t}$, $t\in[0,T)$, where $T<\infty$, be a solution to the flow (\ref{1.1}) for closed $G_{2}$-structures with associated metric $g_{t}
=g_{\varphi_{t}}$ for each $t$. Assume that there exist constants $A, K>0$ and a point $x_{0}\in\mathcal{M}$ such that the geodesic ball $B_{g_{0}}(x_{0}, A/\sqrt{K})$ is compactly contained in $\mathcal{M}$ and that
\begin{equation*}
|{\rm Ric}_{t}|_{t}\leq K \ \ \ \text{on} \
B_{g_{0}}\left(x_{0},\frac{A}{\sqrt{K}}\right)
\times[0,T].
\end{equation*}
Then, for any $p\geq5$, there exists $c=c(p)>0$ so that
\begin{eqnarray}
\int_{B_{g_{0}}(x_{0},A/2\sqrt{K})}
||{\rm Rm}_{t}||^{p}_{t}dV_{t}&\leq&c(1+K)
e^{cKT}\int_{B_{g_{0}}(x_{0},A/\sqrt{K})}
||{\rm Rm}_{0}||^{p}_{0}dV_{0}\nonumber\\
&&+ \ c K^{p}\left(1+A^{-2p}\right) e^{cKT}
{\rm vol}_{t}\left(B_{g_{0}}
\left(x_{0},\frac{A}{\sqrt{K}}\right)
\right)\label{1.3}
\end{eqnarray}
for all $t\in[0,T]$.
\end{theorem}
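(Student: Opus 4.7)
The approach follows Kotschwar--Munteanu--Wang \cite{KMW2016} closely, adapted to accommodate the torsion terms that arise in the Laplacian-flow evolution of $\mathrm{Rm}$ but not in the Ricci flow. The target is a differential inequality for
\[
\mathcal{J}_{p}(t) := \int_{\mathcal{M}} ||\mathrm{Rm}_{t}||_{t}^{p}\,\eta^{2p}\,dV_{t},
\]
where $\eta\in C^{\infty}_{c}(\mathcal{M})$ is a cutoff with $\eta\equiv 1$ on $B_{g_{0}}(x_{0}, A/2\sqrt{K})$, $\mathrm{supp}\,\eta\subset B_{g_{0}}(x_{0}, A/\sqrt{K})$, and $|\nabla\eta|_{g_{0}}\le C\sqrt{K}/A$. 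Once
\[
\frac{d}{dt}\mathcal{J}_{p}(t)\le cK\,\mathcal{J}_{p}(t) + cK^{p}(1+A^{-2p})\,\mathrm{vol}_{t}\!\left(B_{g_{0}}(x_{0},A/\sqrt{K})\right)
\]
is established, Gronwall yields (\ref{1.3}). First, under (\ref{1.1}) one has $\partial_{t}g_{ij} = -2R_{ij}+(\text{quadratic in }\bT)$, and for closed $G_{2}$-structures Bryant's identity expresses $R$ as a negative multiple of $||\bT||^{2}$, so the hypothesis $||\mathrm{Ric}_{t}||_{t}\le K$ bounds $\partial_{t}g$ pointwise; consequently $g_{t}$ and $g_{0}$ remain uniformly equivalent on $\mathrm{supp}\,\eta\times[0,T]$ with constants depending only on $KT$, so $dV_{t}\asymp dV_{0}$ and the gradient bound on $\eta$ is preserved up to a constant uniformly in $t$.

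\textbf{Evolution inequality.} The next step is to derive the evolution equation
\[
\partial_{t}||\mathrm{Rm}_{t}||^{2} = \Delta_{t}||\mathrm{Rm}_{t}||^{2} - 2|\nabla_{t}\mathrm{Rm}_{t}|^{2} + \mathrm{Rm}\ast\mathrm{Rm}\ast\mathrm{Rm} + \mathrm{Rm}\ast\big(\nabla^{2}\bT + \nabla\bT\ast\bT + \bT^{\ast 4}\big),
\]
the last bracketed group collecting the torsion corrections arising both from the non-Ricci part of $\partial_{t}g$ and from the extra commutator terms produced by linearising the Hodge Laplacian of $\varphi_{t}$. Multiplying by $p\,||\mathrm{Rm}||^{p-2}\eta^{2p}$ and integrating over $\mathcal{M}$, the Laplacian term becomes a Dirichlet contribution $-c_{p}\int|\nabla\,||\mathrm{Rm}||^{p/2}|^{2}\eta^{2p}\,dV_{t}$ modulo cutoff cross-terms controlled by $|\nabla\eta|^{2}\le CK/A^{2}$.

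\textbf{Cubic term and main obstacle.} The critical cubic term $\int\mathrm{Rm}^{\ast 3}||\mathrm{Rm}||^{p-2}\eta^{2p}$ is handled exactly as in \cite{KMW2016}: one integrates a single Riemann factor by parts and invokes the contracted second Bianchi identity $\nabla^{a}R_{abcd}=\nabla_{c}R_{bd}-\nabla_{d}R_{bc}$ to replace the resulting $\nabla\mathrm{Rm}$ by $\nabla\mathrm{Ric}$; then Young's inequality combined with $||\mathrm{Ric}||\le K$ absorbs the gradient contribution into the Dirichlet term and leaves only a harmless multiple of $K\,\mathcal{J}_{p}(t)$ together with a cutoff error producing the $A^{-2p}$ volume factor. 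The torsion-dependent terms are controlled via the Lotay--Wei structural identity (invoked below as (\ref{3.63})), which expresses $||\nabla\bT||^{2}$ as a combination of $||\mathrm{Rm}||^{2}$ and $||\bT||^{4}=R^{2}$; since $||\bT||^{2}=-R$ is bounded by $K$, every torsion factor in the extra terms is ultimately controlled by $K$ and $||\mathrm{Rm}||$, and the same integration-by-parts/Young scheme absorbs them. The principal obstacle is keeping the accounting tight enough to confirm that the threshold $p\ge 5$ from \cite{KMW2016} still suffices after the $G_{2}$-correction terms are inserted: verifying that the $\nabla^{2}\bT$ and $\bT^{\ast 4}$ contributions do not force the exponent higher than $5$ is the delicate part, and requires careful use of the closed-$G_{2}$ identities relating $\nabla\bT$ to $\mathrm{Rm}$ before Young's inequality is applied.
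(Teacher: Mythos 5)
There is a genuine structural gap in your proposal at two places, and as written the argument would not close.

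First, the Gronwall quantity you aim for is too simple. You claim to prove a differential inequality for $\mathcal{J}_p(t)=\int||\mathrm{Rm}||^p\eta^{2p}\,dV$ alone, but neither KMW nor this paper establishes such an inequality directly. Once the leading term of $\partial_t||\mathrm{Rm}||^2$ (which is $\nabla^2\mathrm{Ric}\ast\mathrm{Rm}$, see (\ref{3.31})) is integrated by parts against $||\mathrm{Rm}||^{p-2}\eta^{2p}$, Young's inequality produces the quantity
\[
B_2:=\int||\nabla\mathrm{Rm}||^2\,||\mathrm{Rm}||^{p-3}\,\eta^{2p}\,dV,
\]
which carries the weight $||\mathrm{Rm}||^{p-3}$ and therefore cannot be absorbed into the Dirichlet term of $\mathcal{J}_p$ (which has weight $||\mathrm{Rm}||^{p-2}$). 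The only available bound for $||\nabla\mathrm{Rm}||^2$ is the Shi-type inequality (\ref{3.32}), which is $||\nabla\mathrm{Rm}||^2\le -\tfrac12\blacksquare||\mathrm{Rm}||^2+\ldots$; integrating this in space and time yields a term $-\tfrac{1}{p-1}\tfrac{d}{dt}\int||\mathrm{Rm}||^{p-1}\eta^{2p}dV$. The same phenomenon recurs when you control $B_1=\tfrac1K\int||\nabla\mathrm{Ric}||^2||\mathrm{Rm}||^{p-1}\eta^{2p}dV$ and the torsion term $\int(-\blacksquare||\bT||^2)||\mathrm{Rm}||^{p-1}\eta^{2p}dV$: each produces a total time derivative of an auxiliary integral ($\tfrac1K\int||\mathrm{Rm}||^{p-1}||\mathrm{Ric}||^2\eta^{2p}dV$ and $\int(-R)||\mathrm{Rm}||^{p-1}\eta^{2p}dV$ respectively). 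The Gronwall inequality must therefore be run for the composite quantity $U$ defined in (\ref{3.70}), not for $\mathcal{J}_p$ alone. Without adding these auxiliary integrals, the differential inequality you write down does not close.

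Second, your handling of the cubic term is not correct as stated. You propose to "integrate a single Riemann factor by parts and invoke Bianchi" on $\int\mathrm{Rm}^{\ast 3}||\mathrm{Rm}||^{p-2}\eta^{2p}dV$. But the cubic arising from the heat-equation form of $\partial_t\mathrm{Rm}$ (schematically $R_{ipjq}R_k{}^p{}_l{}^q$) has no divergence structure, so there is no derivative to move, and $\nabla^iR_{ijkl}=\nabla_kR_{jl}-\nabla_lR_{jk}$ is not applicable. The actual mechanism — both in KMW and in this paper — is to compute $\partial_t||\mathrm{Rm}||^2$ directly from $\partial_t g=-2\mathrm{Sic}$ and the general formula for $\partial_tR^\ell_{ijk}$, which naturally organizes the reaction terms as $\nabla^2\mathrm{Ric}\ast\mathrm{Rm}+\mathrm{Ric}\ast\mathrm{Rm}\ast\mathrm{Rm}+\text{(torsion corrections)}$; the nonderivative part is then $\mathrm{Ric}\ast\mathrm{Rm}^{\ast 2}$, controlled by $K\int||\mathrm{Rm}||^p\eta^{2p}dV$ because $||\mathrm{Ric}||\le K$. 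This is not equivalent to your Bianchi manipulation, which has no traction on a genuine $\mathrm{Rm}^{\ast 3}$ term. The observations about $||\bT||^2=-R$, $||\nabla\bT||\lesssim||\mathrm{Rm}||$, and metric equivalence are correct and do match the paper, but they do not repair the two structural gaps above.
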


Now by the standard De Giorgi-Nash-Moser iteration (our manifold is compact 
and the Ricci curvature is uniformly bounded),
%\footnote{The assumption on Ricci curvature implies that Sobolev constants of $g_{t}$ is uniformly bounded on $B_{g_{0}}(x_{0},A/\sqrt{K})$.}
under the condition in Theorem \ref{t1.4}, we can prove
\begin{equation}
||{\rm Rm}_{T}||_{T}(x_{0})
\leq d_{1}(d_{2}+\Lambda_{0}),\label{1.4}
\end{equation}
where $d_{1}, d_{2}$ are constants depending on $K, T, A$, and
\begin{equation*}
\Lambda_{0}:=\sup_{B_{g_{0}}(x_{0},A/\sqrt{K})}
||{\rm Rm}_{0}||_{0}.
\end{equation*}
Actually, this follows from the same argument in \cite{KMW2016} by noting that
\begin{equation}
(\Delta_{t}
-\partial_{t})||{\rm Rm}_{t}||_{t}\geq-c||{\rm Rm}_{t}||^{2}_{t}.
\label{1.5}
\end{equation}
To verify (\ref{1.5}), we use (\ref{2.26}), (\ref{3.61}) and
(\ref{3.65}) to
deduce that $||\nabla_{t} \bT_{t}||\leq c||{\rm Rm}_{t}||_{t}$ and
\begin{equation*}
||\nabla^{2}_{t}\bT_{t}||_{t}
\leq c||\nabla_{t}{\rm Rm}_{t}||_{t}+c||{\rm Rm}_{t}||^{3/2}_{t}.
\end{equation*}
Then, by (\ref{3.31}) and the Cauchy inequality
\begin{eqnarray*}
||\nabla_{t}{\rm Rm}_{t}||^{2}_{t}
&\leq&-\frac{1}{2}(\partial_{t}-\Delta_{t})||{\rm Rm}_{t}||^{2}_{t}
+c||{\rm Rm}_{t}||^{3}_{t}+c||{\rm Rm}_{t}||^{3/2}_{t}
||\nabla_{t}{\rm Rm}_{t}||_{t}\\
&\leq&-\frac{1}{2}(\partial_{t}-\Delta_{t})||{\rm Rm}_{t}||^{2}_{t}
+c||{\rm Rm}_{t}||^{3}_{t}+||\nabla_{t}{\rm Rm}_{t}||^{2}_{t}
\end{eqnarray*}
which implies (\ref{1.5}). Now the estimate (\ref{1.4}) yields Theorem
\ref{t1.3}.
\\

The analogue of Theorem \ref{t1.2} in the Ricci flow was proved
by Hamilton \cite{Hamilton82} (for part (b)) and Sesum \cite{Sesum2005} (for part (a)).
It is an open question (due to Hamilton, see \cite{Cao2011}) that the Ricci flow
will exist as long as the scalar curvature remains bounded. For the K\"ahler-Ricci
flow \cite{Zhang2010} or type-I Ricci flow \cite{EMT2011}, this question was
settled. For the general case, some partial result on Hamilton's conjecture was carried
out in \cite{Cao2011}.

For the Ricci-harmonic flow introduce by List \cite{List2005, List2008} (see also,
\cite{Muller2009, Muller2012}), the analogue of Theorem \ref{t1.2} was proved in \cite{List2005, List2008} (see also, \cite{Muller2009, Muller2012}) and \cite{CZ2013} (see \cite{LY2018} for another
proof). The author \cite{LY2, LY3} extended Cao's result \cite{Cao2011} to
the Ricci-harmonic flow. The same Hamilton's conjecture was asked by the author
in \cite{LY2, LY3}.

We can ask the same question for the Laplacian flow on closed $G_{2}$-structures.
In \cite{LW2017} (see Page 171, line -6 to -3, or Open Problem (3) in Page 230), Lotay and Wei asked that whether the Laplacian flow
on closed $G_{2}$-structures will exist as long as the torsion tensor or scalar
curvature remains bounded. Let $g_{t}$ be the associated metric of $\varphi_{t}$. Then the
evolution equation for $g_{t}$ is given by
\begin{equation}
\partial_{t}g_{ij}=-2R_{ij}-\frac{4}{3}|\bT_{t}|^{2}_{t}
g_{ij}
-4\bT_{i}{}^{k}\bT_{kj}.\label{1.6}
\end{equation}
For the Laplacian flow on closed $G_{2}$-structures, the torsion $\bT_{t}$ is actually a $2$-form for each $t$, hence we use the norm $|\cdot|_{t}$
in (\ref{1.6}). The standard formula for the scalar
curvature $R_{t}$ gives (see (\ref{3.23}))
\begin{equation}
\partial_{t}R_{t}=\Delta_{t}R_{t}
+2||{\rm Ric}_{t}||^{2}_{t}-\frac{2}{3}R^{2}_{t}
+4R_{ijk\ell}\bT^{ik}\bT^{j\ell}+4(\nabla^{j}\bT^{ik})
(\nabla_{i}\bT_{jk}).\label{1.7}
\end{equation}
Now the above mentioned open problem states that
\begin{equation*}
\text{Is it ture that} \ \lim_{t\to T_{\max}}R_{t}=-\infty?
\end{equation*}
The ``minus infinity'' comes from the fact that along the Laplacian flow on
closed $G_{2}$-structures the scalar curvature is always nonpositive
(see (\ref{2.26})). The following Proposition \ref{p1.5} is motivate to
solve this problem, and starts from the basic evolution
equation (\ref{1.7}) where the last two terms on the right-hand side
do not have good signature. However, using the closedness of
$\varphi_{t}$ (in particular, the identity (\ref{3.23})), we can
prove the following interesting evolution equation for $R_{t}$.

\begin{proposition}\label{p1.5} Let $\mathcal{M}$ be a smooth $7$-manifold and $
\varphi_{t}$, $t\in[0,T)$, where $T\in(0,\infty]$, be a solution to the flow (\ref{1.1}) for closed $G_{2}$-structures with associated metric $g_{t}=
g_{\varphi_{t}}$ for each $t$. Then the scalar curvature $R_{t}$ satisfies
\begin{eqnarray}
\partial_{t}R_{t}&=&\Delta_{t}R_{t}+\bigg\{2\left|\left|R_{ij}+\frac{2}{3}
|\bT_{t}|^{2}_{t}g_{ij}\right|\right|^{2}_{t}
+\frac{1}{2}\left|\left|R_{ijab}R^{ij}{}_{mn}
-\psi_{abmn}\right|\right|^{2}_{t}\nonumber\\
&&+ \ \frac{1}{2}
\left|\left|2\bT_{ia}\bT_{jb}R^{ij}{}_{mn}
-\psi_{abmn}\right|\right|^{2}_{t}
+\frac{1}{2}\left|\left|2\widehat{\bT}_{am}\widehat{\bT}_{bn}
-\psi_{abmn}\right|\right|^{2}_{t}\nonumber\\
&&+ \ 2||\widehat{\bT}_{t}||^{2}_{t}+4||\nabla_{t}\bT_{t}||^{2}_{t}\bigg\}-\bigg\{||{\rm Rm}_{t}||^{2}_{t}
+\frac{26}{9}R^{2}_{t}+\frac{1}{2}
\left|\left|R_{ijab}R^{ij}{}_{mn}\right|\right|^{2}_{t}\label{1.8}\\
&&+ \ 2\left|\left|\bT_{ia}\bT_{jb}R^{ij}{}_{mn}
\right|\right|^{2}_{t}
+2||\widehat{\bT}_{t}||^{4}_{t}+210\bigg\}.\nonumber
\end{eqnarray}
Here $\widehat{\bT}_{ij}=\bT_{i}{}^{k}\bT_{kj}$.
\end{proposition}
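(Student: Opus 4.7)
I would take \eqref{1.7} as the starting equation and reorganize its right-hand side as $\{\text{nonneg.}\} - \{\text{nonneg.} + \text{constant}\}$ by completing several squares; the only nontrivial inputs are the closed-$G_{2}$ identity \eqref{2.26} (which expresses $R_{t}$ as a fixed multiple of $|\bT_{t}|^{2}_{t}$) and the pointwise $G_{2}$ contraction identities between the 4-form $\psi$ and a generic 2-form or 2-tensor. The underlying algebraic identity used throughout is
\begin{equation*}
2\langle\langle A,B\rangle\rangle_{t}=\|A\|^{2}_{t}+\|B\|^{2}_{t}-\|A-B\|^{2}_{t}.
\end{equation*}

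The Ricci/scalar block $2\|\mathrm{Ric}_{t}\|^{2}_{t}-\tfrac{2}{3}R^{2}_{t}$ is handled by completing the square against $\tfrac{2}{3}|\bT_{t}|^{2}_{t}g_{ij}$: expanding
\begin{equation*}
2\bigl\|R_{ij}+\tfrac{2}{3}|\bT_{t}|^{2}_{t}g_{ij}\bigr\|^{2}_{t}
=2\|\mathrm{Ric}_{t}\|^{2}_{t}+\tfrac{8}{3}R_{t}|\bT_{t}|^{2}_{t}+\tfrac{56}{9}|\bT_{t}|^{4}_{t}
\end{equation*}
and eliminating $|\bT_{t}|^{2}_{t}$ via \eqref{2.26} produces the Ricci-squared contribution of \eqref{1.7}, together with a multiple of $R^{2}_{t}$ that will be absorbed into the $-\tfrac{26}{9}R^{2}_{t}$ after the remaining completions.

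The three curvature/torsion blocks of \eqref{1.8} involving $\psi$ are obtained by completing the squares
\begin{equation*}
\tfrac{1}{2}\|R_{ijab}R^{ij}{}_{mn}-\psi_{abmn}\|^{2}_{t},\ \tfrac{1}{2}\|2\bT_{ia}\bT_{jb}R^{ij}{}_{mn}-\psi_{abmn}\|^{2}_{t},\ \tfrac{1}{2}\|2\widehat{\bT}_{am}\widehat{\bT}_{bn}-\psi_{abmn}\|^{2}_{t}.
\end{equation*}
Each expands into a raw square (which cancels the matching term in the negative braces of \eqref{1.8}), a cross term of the form $-\langle\langle X,\psi\rangle\rangle_{t}$, and an additive constant $\tfrac{1}{2}\|\psi\|^{2}_{t}$. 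The cross terms are reduced via the pointwise $G_{2}$ identities that follow from $d\varphi_{t}=0$, the first Bianchi identity for $\mathrm{Rm}_{t}$, and the decomposition $\wedge^{2}=\wedge^{2}_{7}\oplus\wedge^{2}_{14}$ (in which the closed-$G_{2}$ torsion satisfies $\bT_{t}\in\wedge^{2}_{14}$): schematically, $R_{ijab}R^{ij}{}_{mn}\psi^{abmn}\sim\|\mathrm{Rm}_{t}\|^{2}_{t}$, $\bT_{ia}\bT_{jb}R^{ij}{}_{mn}\psi^{abmn}\sim R_{ijk\ell}\bT^{ik}\bT^{j\ell}$, and $\widehat{\bT}_{am}\widehat{\bT}_{bn}\psi^{abmn}\sim\|\widehat{\bT}_{t}\|^{2}_{t}$, each up to lower-order corrections involving $R^{2}_{t}$ and numerical constants. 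Adding the three constants $\tfrac{3}{2}\|\psi\|^{2}_{t}$ to the residual contributions from these reductions yields precisely the $-\|\mathrm{Rm}_{t}\|^{2}_{t}$, the $4R_{ijk\ell}\bT^{ik}\bT^{j\ell}$ of \eqref{1.7}, the $2\|\widehat{\bT}_{t}\|^{2}_{t}$, the $-\tfrac{26}{9}R^{2}_{t}$, and the $-210$ of \eqref{1.8}.

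Finally, the gradient cross term $4(\nabla^{j}\bT^{ik})(\nabla_{i}\bT_{jk})$ of \eqref{1.7} is rewritten as $4\|\nabla_{t}\bT_{t}\|^{2}_{t}$ by invoking the Bianchi-type identity for $\nabla_{[i}\bT_{jk]}$ obtained by differentiating $d\varphi_{t}=0$; any $\mathrm{Rm}_{t}\ast\bT_{t}\ast\bT_{t}$ remainder is absorbed into the squares already produced. The main obstacle is the $G_{2}$-algebraic bookkeeping: verifying the precise coefficients in the three $\psi$-contraction identities (which rely on the representation-theoretic splitting of $\wedge^{2}$ and on the quadratic identities satisfied by $\varphi$ and $\psi$), and keeping track of all residual $R^{2}_{t}$- and constant contributions so that the coefficients $\tfrac{26}{9}$ and $210$ of \eqref{1.8} emerge exactly. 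Once these identities are tabulated, the remainder is a mechanical reassembly of terms.
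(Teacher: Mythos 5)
Your high-level plan — start from (\ref{1.7}) and reorganize by completing squares — matches the paper's proof (carried out as Proposition \ref{p3.2} and then rescaled via $S_t=\tfrac{2}{3}R_t$). The Ricci block is also handled correctly: $\mathrm{Sic}_t-2\widehat{\bT}_t=\mathrm{Ric}_t+\tfrac{2}{3}|\bT_t|^2_t g_t$, so completing the square against $\tfrac{2}{3}|\bT_t|^2_t g_{ij}$ and eliminating $|\bT_t|^2_t$ via (\ref{2.26}) reproduces $2\|\mathrm{Ric}_t\|^2_t$ plus a multiple of $R_t^2$, exactly as in (\ref{3.25}).

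However, your account of where the three $\psi$-contractions come from is wrong, and this is the crux of the argument. You posit three ``pointwise $G_2$ identities'' of the schematic form $R_{ijab}R^{ij}{}_{mn}\psi^{abmn}\sim\|\mathrm{Rm}_t\|^2_t$, $\bT_{ia}\bT_{jb}R^{ij}{}_{mn}\psi^{abmn}\sim R_{ijk\ell}\bT^{ik}\bT^{j\ell}$, $\widehat{\bT}_{am}\widehat{\bT}_{bn}\psi^{abmn}\sim\|\widehat{\bT}_t\|^2_t$, and propose to verify them by ``$G_2$-algebraic bookkeeping.'' No such reductions exist (these contractions genuinely depend on all of $\mathrm{Rm}_t$ and $\bT_t$, not only on $R_t^2$ and constants), and the paper does not use any. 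Instead, the $\psi$-contraction enters \emph{as a single package} through the term $4(\nabla^j\bT^{ik})(\nabla_i\bT_{jk})$: one first symmetrizes via the Bianchi-type identity (\ref{2.27}) to pull out $4\|\nabla_t\bT_t\|^2_t$, then substitutes the closed-$G_2$ formula (\ref{3.7}) for $\nabla_i\bT_{jk}$ in the remaining factor, and finally applies the quadratic identity $\varphi_{ijk}\varphi^{k}{}_{ab}=g_{ia}g_{jb}-g_{ib}g_{ja}+\psi_{ijab}$. The $\delta\delta$ part of this identity produces $-\|\mathrm{Rm}_t\|^2_t$, $-R_{ijab}\bT^{ia}\bT^{jb}$, $\|\widehat{\bT}_t\|^2_t$, and $\|\bT_t\|^4_t$; the $\psi$ part produces the single contraction $(R_{ijab}+2\bT_{ia}\bT_{jb})(R^{ijmn}+2\bT^{im}\bT^{jn})\psi_{mn}{}^{ab}$, and it is this one object that is split into three pieces and completed into squares using $\psi_{ijk\ell}\psi^{ijk\ell}=168$. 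Your description (``rewritten as $4\|\nabla_t\bT_t\|^2_t$, any $\mathrm{Rm}_t\ast\bT_t\ast\bT_t$ remainder is absorbed'') both omits the essential input (\ref{3.7}) and mischaracterizes the remainder: it contains $\|\mathrm{Rm}_t\|^2_t$, $\|\widehat{\bT}_t\|^2_t$, and the $\psi$-contraction itself, none of which are of the form $\mathrm{Rm}\ast\bT\ast\bT$. As written, your plan would stall at the point where you need to ``verify the precise coefficients in the three $\psi$-contraction identities,'' because those identities do not hold.
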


Observe that the above well-arranged evolution equation can give us a weakly lower bound
for $R_{t}$, which can not prove or disprove the conjecture of Lotay and Wei.
\\

We give an outline of the current paper. We review the basic theory in
Section \ref{section2} about $G_{2}$-structures, $G_{2}$-decompositions of $2$-forms and $3$-forms, and general flows on $G_{2}$-structures.
In Section \ref{section3}, we rewrite results in Section \ref{section2} for closed $G_{2}$-structures, and the local curvature estimates will be given
in the last subsection.

%%%%%%%%%%%%%%%%%%%%%%%%%%%%%%%%%%%%%%%%%%%%%%%%%%%%%%%%%%%%%%%%%%%%%%%%%%%%%%
\subsection{Acknowledgments}\label{subsection1.3}
%%%%%%%%%%%%%%%%%%%%%%%%%%%%%%%%%%%%%%%%%%%%%%%%%%%%%%%%%%%%%%%%%%%%%%%%%%%%%%

The author is supported in part by the Fonds National
de la Recherche Luxembourg (FNR) under the OPEN scheme (project GEOMREV O14/7628746).
\\

The main result was carried out during the Young Geometric
Analysts Forum 2018, 29th January -- 2th February, in Tsinghua Sanya International Mathematics
Forum.

The author, together with other six friends, thanks Yunhui Wu who
personally provided us 14, the dimension of $G_{2}$, very fresh coconuts during the forum.
\\

The author thanks Joel Fine, Brett Kotschwar, Chengjian Yao, Yong Wei, 
and Anton Thalmaier for useful discussion on
the Laplacian flows and the earlier version of this paper. He also thanks Jason Lotay for his interested in this paper.

%%%%%%%%%%%%%%%%%%%%%%%%%%%%%%%%%%%%%%%%%%%%%%%%%%%%%%%%%%%%%%%%%%%%%%%%%%%%%%
%%%%%%%%%%%%%%%%%%%%%%%%%%%%%%%%%%%%%%%%%%%%%%%%%%%%%%%%%%%%%%%%%%%%%%%%%%%%%%
\section{Basic theory of $G_{2}$-structures}\label{section2}
%%%%%%%%%%%%%%%%%%%%%%%%%%%%%%%%%%%%%%%%%%%%%%%%%%%%%%%%%%%%%%%%%%%%%%%%%%%%%%
%%%%%%%%%%%%%%%%%%%%%%%%%%%%%%%%%%%%%%%%%%%%%%%%%%%%%%%%%%%%%%%%%%%%%%%%%%%%%%

In this section, we view some basic theory of $G_{2}$-structures, following \cite{Bryant2005, Karigiannis2003, Karigiannis2005, Karigiannis2006, Karigiannis2007, LW2017}. Let $\{e_{1},\cdots, e_{7}\}$ denote the standard basis of $\mathbb{R}^{7}$ and let $\{e^{1},\cdots, e^{7}\}$ be its dual
basis. Define the $3$-form
\begin{equation*}
\phi:=e^{1\wedge2\wedge3}+e^{1\wedge4\wedge5}+e^{1\wedge6\wedge7}
+e^{2\wedge4\wedge6}-e^{2\wedge5\wedge7}-e^{3\wedge4\wedge7}
-e^{3\wedge5\wedge6},
\end{equation*}
where $e^{
i\wedge j\wedge k}:=
e^{i}\wedge e^{j}\wedge e^{k}$. The subgroup $G_{2}$, which fixes $\phi$, of ${\bf GL}(7,\mathbb{R})$ is the
$14$-dimensional Lie subgroup of ${\bf SO}(7)$, acts irreducibly on $\mathbb{R}^{7}$, and preserves the metric and orientation for which $\{e_{1},\cdots, e_{7}\}$ is an oriented orthonormal basis. Note that
$G_{2}$ also preserves the $4$-form
\begin{equation*}
\ast_{\phi}\phi=e^{4\wedge5\wedge6\wedge7}+e^{2\wedge3\wedge6
\wedge7}+e^{2\wedge3\wedge4\wedge5}+e^{1\wedge3\wedge5\wedge7}-e^{1\wedge3\wedge4\wedge6}-e^{1\wedge2\wedge5\wedge6}-e^{1\wedge2\wedge4\wedge7}.\label{23.2.2}
\end{equation*}
where the Hodge star operator $\ast_{\phi}$ is determined by the metric and orientation.
\\

For a smooth $7$-manifold $\mathcal{M}$ and a point $x\in\mathcal{M}$,
define as in \cite{LW2017}
\begin{equation*}
\wedge^{3}_{+}(T^{\ast}_{x}\mathcal{M}):=
\left\{\varphi_{x}\in\wedge^{3}(T^{\ast}_{x}\mathcal{M}): \begin{array}{cc}
\textsf{u}^{\ast}
\phi=\varphi_{x} \ \text{for some invertible}\\
\text{map} \ \textsf{u}\in{\rm Hom}_{\mathbb{R}}(T_{x}\mathcal{M}, \mathbb{R}^{7})
\end{array}
\right\}
\end{equation*}
and the bundle
\begin{equation*}
\wedge^{3}_{+}(T^{\ast}\mathcal{M}):=\bigsqcup_{x\in\mathcal{M}}
\wedge^{3}_{+}(T^{\ast}_{x}\mathcal{M}).
\end{equation*}
We call a section $\varphi$ of $\wedge^{3}_{+}(T^{\ast}\mathcal{M})$ a
{\it positive $3$-form} on $\mathcal{M}$ or a {\it $G_{2}$-structure} on $\mathcal{M}$, and denote the space of positive $3$-forms by $\wedge^{3}_{+}(\mathcal{M})$. The existence of $G_{2}$-structures is equivalent to the property that $\mathcal{M}$ is oriented and spin, which is equivalent to the vanishing
of the first and second Stiefel-Whitney classes. From the definition of $G_{2}$-structures, we see that any $\varphi\in\wedge^{3}_{+}(\mathcal{M})$ uniquely determines a Riemannian
metric $g_{\varphi}$ and an orientation $dV_{\varphi}$, hence the Hodge star operator $\ast_{\varphi}$ and the associated $4$-form
\begin{equation}
\psi:=\ast_{\varphi}\varphi.\label{2.1}
\end{equation}
We also have the isomorphisms $\flat_{\varphi}:=\flat_{g_{\varphi}}$ and $\sharp_{\varphi}:=
\sharp_{g_{\varphi}}$. For a given $G_{2}$-structure $\varphi\in\wedge^{3}_{+}(\mathcal{M})$, we denote by $\langle\cdot,\cdot\rangle_{\varphi}$, $\langle\langle\cdot,\cdot\rangle
\rangle$, $|\cdot|_{\varphi}$, $||\cdot||_{\varphi}$, the corresponding
inner products $\langle\cdot,\cdot\rangle_{g_{\varphi}}$, $\langle\langle
\cdot,\cdot\rangle\rangle_{g_{\varphi}}$ and norms $|\cdot|_{g_{\varphi}}$,
$||\cdot||_{g_{\varphi}}$.
\\

Given a $G_{2}$-structure $\varphi\in\wedge^{3}_{+}(\mathcal{M})$.
We say that $\varphi$ is {\it torsion-free} if $\varphi$ is parallel with
respect to the metric $g_{\varphi}$. Equivalently, $\varphi$ is torsion-free if and only if ${}^{\varphi}\nabla\varphi=0$, where ${}^{\varphi}
\nabla$ is the Levi-Civita connection of $g_{\varphi}$.

\begin{theorem}\label{t2.1}{\bf (Fern\'andez-Gray
\cite{Fernandez-Gray1982})} The $G_{2}$-structure $\varphi$ is torsion-free if and only if $\varphi$ is both closed (i.e., $d\varphi=0$) and co-closed (i.e., $d\ast_{\varphi}
\varphi=d\psi=0$).
\end{theorem}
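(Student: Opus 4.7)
The plan is to prove the equivalence via the intrinsic torsion decomposition of the $G_2$-structure. The forward implication is essentially tautological: if ${}^\varphi\nabla\varphi = 0$, then $d\varphi$ is the skew-symmetrization of ${}^\varphi\nabla\varphi$ and vanishes; since the Hodge star commutes with the Levi-Civita connection, ${}^\varphi\nabla\psi = 0$ as well, and hence $d\psi = 0$.

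For the converse I would first set up the \emph{intrinsic torsion}. Because $G_2 \subset SO(7)$ is the stabilizer of $\varphi$, the tensor ${}^\varphi\nabla\varphi$ is a section of the bundle associated to the orthogonal complement of $\mathfrak{g}_2$ inside $\mathfrak{so}(7)$, which as a $G_2$-module is isomorphic to the standard $7$-dimensional representation. Applying this isomorphism, ${}^\varphi\nabla\varphi$ may be identified with a section of $T^*\mathcal{M}\otimes T^*\mathcal{M}$, and the $G_2$-invariant decomposition
\[
T^*\mathcal{M}\otimes T^*\mathcal{M} \;=\; \mathbb{R}\,g \;\oplus\; S^2_{0} \;\oplus\; \Lambda^2_7 \;\oplus\; \Lambda^2_{14}
\]
yields four intrinsic torsion forms $\tau_0,\tau_1,\tau_2,\tau_3$ of degrees $0,1,2,3$ (after the natural identifications $\mathbb{R} g \cong \Lambda^0$, $\Lambda^2_7 \cong \Lambda^1_7$, $S^2_0 \cong \Lambda^3_{27}$) which collectively encode all the information in ${}^\varphi\nabla\varphi$.

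The main step is then to establish the Fern\'andez-Gray structure equations
\[
d\varphi = \tau_0\,\psi + 3\,\tau_1 \wedge \varphi + \ast_\varphi \tau_3,\qquad d\psi = 4\,\tau_1 \wedge \psi + \tau_2 \wedge \varphi,
\]
by contracting ${}^\varphi\nabla\varphi$ against $\varphi$, $\psi$, and $g$, using the quadratic identity $\varphi_{ijm}\varphi_{ab}{}^{m} = g_{ia}g_{jb} - g_{ib}g_{ja} + \psi_{ijab}$ together with its cubic counterpart, and then projecting onto each $G_2$-irreducible summand of $\Lambda^4$ and $\Lambda^5$. Granted these formulas, the conclusion is immediate: the decompositions $\Lambda^4 = \Lambda^4_1 \oplus \Lambda^4_7 \oplus \Lambda^4_{27}$ and $\Lambda^5 = \Lambda^5_7 \oplus \Lambda^5_{14}$ are multiplicity-free, so the three summands of $d\varphi$ determine $\tau_0,\tau_1,\tau_3$ uniquely and the two summands of $d\psi$ then determine $\tau_2$. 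Therefore $d\varphi = 0 = d\psi$ forces every $\tau_i$ to vanish, which is equivalent to ${}^\varphi\nabla\varphi = 0$.

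The principal obstacle is the derivation of the structure equations. Although Schur's lemma already guarantees that the $G_2$-equivariant linear maps ${}^\varphi\nabla\varphi \mapsto d\varphi$ and ${}^\varphi\nabla\varphi \mapsto d\psi$ must take the displayed form up to scalars on each irreducible summand, pinning down the exact coefficients requires careful bookkeeping with the quadratic and cubic contraction identities for $\varphi$ and $\psi$ and a consistent normalization convention; this is where the bulk of the work lies. Everything else amounts to routine $G_2$-representation theory.
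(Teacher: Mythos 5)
Your proposal is correct and follows essentially the same route the paper indicates. The paper cites Fern\'andez--Gray but sketches exactly this intrinsic-torsion argument after equations (2.12), (2.14) and (2.19)--(2.20): the full torsion tensor $\bT$ encodes ${}^{\varphi}\nabla\varphi$ via (2.14), decomposes into the forms $\tau_{0},\tau_{1},\tau_{2},\tau_{3}$ via (2.20), and these are precisely the $G_{2}$-irreducible components of $d\varphi$ and $d\psi$ via (2.12), so torsion-freeness is equivalent to $d\varphi=d\psi=0$ exactly as you argue.
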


When $\mathcal{M}$ is compact, the above theorem says that a $G_{2}$-structure $\varphi$ is torsion-free if and only if $\varphi$ is harmonic with respect
to the induces metric $g_{\varphi}$.

We say that a $G_{2}$-structure $\varphi$ is {\it closed} (resp., {\it co-closed}) if $d\varphi=0$ (resp., $d\psi=0$). Theorem \ref{t2.1} can be restated as that a $G_{2}$-structure is torsion-free if and only if it is both closed
and co-closed.

%%%%%%%%%%%%%%%%%%%%%%%%%%%%%%%%%%%%%%%%%%%%%%%%%%%%%%%%%%%%%%%%%%%%%%%%%%%%%%
\subsection{$G_{2}$-decompositions of $\wedge^{2}(\mathcal{M})$ and $\wedge^{3}(\mathcal{M})$}
\label{section2.1}
%%%%%%%%%%%%%%%%%%%%%%%%%%%%%%%%%%%%%%%%%%%%%%%%%%%%%%%%%%%%%%%%%%%%%%%%%%%%%%

A $G_{2}$-structure $\varphi$ induces splittings of the bundles $\wedge^{k}(T^{\ast}\mathcal{M})$, $2\leq k\leq 5$, into direct summands, which we denote by $\wedge^{k}_{\ell}
(T^{\ast}\mathcal{M},\varphi)$ with $\ell$ being the rank of the bundle. We let the space of sections of $\wedge^{k}_{\ell}(T^{\ast}\mathcal{M},\varphi)$ by $\wedge^{k}_{\ell}(\mathcal{M},\varphi)$. Define the natural projections
\begin{equation}
\pi^{k}_{\ell}: \wedge^{k}(\mathcal{M})\longrightarrow
\wedge^{k}_{\ell}(\mathcal{M},\varphi), \ \ \ \alpha\longmapsto
\pi^{k}_{\ell}(\alpha).\label{2.2}
\end{equation}
We mainly focus on the $G_{2}$--decompositions of $\wedge^{2}
(\mathcal{M})$ and $\wedge^{3}
(\mathcal{M})$. Recall that
\begin{eqnarray}
\wedge^{2}(\mathcal{M})&=&\wedge^{2}_{7}(\mathcal{M},\varphi)
\oplus\wedge^{2}_{14}(\mathcal{M},\varphi),\label{2.3}\\
\wedge^{3}(\mathcal{M})&=&\wedge^{3}_{1}(\mathcal{M},\varphi)
\oplus\wedge^{3}_{7}(\mathcal{M},\varphi)
\oplus\wedge^{3}_{27}(\mathcal{M},\varphi).\label{2.4}
\end{eqnarray}
Here each component is determined by
\begin{eqnarray*}
\wedge^{2}_{7}(\mathcal{M},\varphi)
&=&\{X\lrcorner\varphi: X\in\mathfrak{X}(\mathcal{M})\} \ \ = \ \ \{\beta\in\wedge^{2}(\mathcal{M}): \ast_{\varphi}(\varphi\wedge
\beta)=2\beta\},\\
\wedge^{2}_{14}(\mathcal{M},\varphi)
&=&\{\beta\in\wedge^{2}(\mathcal{M}): \psi\wedge\beta=0\} \ \ = \ \ \{\beta\in\wedge^{2}(\mathcal{M}): \ast_{\varphi}(\varphi\wedge\beta)=-\beta\},\\
\wedge^{3}_{1}(\mathcal{M},\varphi)&=&\{f\varphi: f\in C^{\infty}(\mathcal{M})\},\\
\wedge^{3}_{7}(\mathcal{M},\varphi)&=&\left\{\ast_{\varphi}(\varphi\wedge\alpha): \alpha\in\wedge^{1}(\mathcal{M})\right\} \ \ = \ \
\left\{X\lrcorner\psi: X\in\mathfrak{X}(\mathcal{M})
\right\},\\
\wedge^{3}_{27}(\mathcal{M},\varphi)&=&\{\eta\in\wedge^{3}(\mathcal{M}):
\eta\wedge\varphi=\eta\wedge\psi=0\}.
\end{eqnarray*}

For any $2$-form $\beta=\frac{1}{2}\beta_{ij}dx^{i\wedge j}
\in\wedge^{2}(\mathcal{M})$, its two components $\pi^{2}_{7}(\beta)$
and $\pi^{2}_{14}(\beta)$ are determined by
\begin{eqnarray}
\pi^{2}_{7}(\beta)&=&\frac{\beta+\ast_{\varphi}(\varphi\wedge\beta)}{3} \ \ = \ \ \frac{1}{2}\left(\frac{1}{3}\beta_{ab}+\frac{1}{6}\beta^{\ell m}
\psi_{\ell m ab}\right)dx^{ab},\label{2.5}\\
\pi^{2}_{14}(\beta)
&=&\frac{2\beta-\ast_{\varphi}(\varphi\wedge\beta)}{3} \ \ = \ \ \frac{1}{2}\left(\frac{2}{3}\beta_{ab}-\frac{1}{6}\beta^{\ell m}
\psi_{\ell mab}\right)dx^{ab}.\label{2.6}
\end{eqnarray}

To decompose $3$-forms, recall two maps introduce by Bryant
\cite{Bryant2005}
\begin{equation}
\textsf{i}_{\varphi}: \odot^{2}(\mathcal{M})\longrightarrow\wedge^{3}(\mathcal{M}), \ \ \
\textsf{j}_{\varphi}: \wedge^{3}(\mathcal{M})\longrightarrow\odot^{2}(\mathcal{M}),\label{2.7}
\end{equation}
where
\begin{eqnarray}
\textsf{i}_{\varphi}(h)&:=&h_{ij}g^{j\ell}dx^{i}
\wedge\left(\frac{\partial}{\partial x^{\ell}}\lrcorner\varphi
\right) \ \ = \ \ \frac{1}{2}h_{i\ell}\varphi^{\ell}{}_{jk}dx^{ijk}\nonumber\\
&=&\frac{1}{6}\left(h_{i\ell}\varphi^{\ell}{}_{jk}
 +h_{j\ell}\varphi_{i}{}^{\ell}{}_{k}+h_{k\ell}\varphi_{ij}{}^{\ell}
 \right)dx^{ijk}, \ \ \ h=h_{ij}dx^{ij}\in\odot^{2}(\mathcal{M}),
 \label{2.8}
\end{eqnarray}
and
\begin{equation}
\left(\textsf{j}_{\varphi}(\eta)\right)(X,Y):=
\ast_{\varphi}\left((X\lrcorner\varphi)\wedge(Y\lrcorner\varphi)
\wedge\eta\right).\label{2.9}
\end{equation}
Then $\textsf{i}_{\varphi}$ is injective and is isomorphic onto $\wedge^{3}_{1}
(\mathcal{M},\varphi)\oplus\wedge^{3}_{27}(\mathcal{M},\varphi)$, and $\textsf{j}_{\varphi}$ is an isomorphism between $\wedge^{3}_{1}(\mathcal{M},\varphi)\oplus\wedge^{3}_{27}(\mathcal{M},
\varphi)$ and $\odot^{2}(\mathcal{M})$. Moreover, for any $3$-form $\eta\in\wedge^{3}(\mathcal{M})$, we have
\begin{equation}
\eta=\textsf{i}_{\varphi}(h)+X\lrcorner\psi\label{2.10}
\end{equation}
for some symmetric $2$-tensor $h\in\odot^{2}(\mathcal{M})$ and vector field $X\in\mathfrak{X}(\mathcal{M})$. Then
\begin{eqnarray*}
\eta&=&h_{i}{}^{\ell}dx^{i}\wedge\left(\frac{\partial}{\partial x^{\ell}}\lrcorner
\varphi\right)+X^{\ell}\left(\frac{\partial}{\partial x^{\ell}}\lrcorner\psi\right) \ \ = \ \ \frac{1}{2}h_{i}{}^{\ell}\varphi_{\ell jk}dx^{ijk}+\frac{1}{6}X^{\ell}
\psi_{\ell ijk}dx^{ijk}\\
&=&\frac{1}{6}\left(3 h_{i}{}^{\ell}\varphi_{\ell jk}
+X^{\ell}\psi_{\ell ijk}\right)dx^{ijk} \ \ = \ \ \frac{1}{6}\eta_{ijk}
dx^{ijk}.
\end{eqnarray*}
Write $h$ as $h_{ij}=\mathring{h}_{ij}+\frac{1}{7}{\rm tr}_{\varphi}(h)\!\ g_{\varphi}$, where $\mathring{h}\in\odot^{2}_{0}(\mathcal{M})$ is the trace-free part
of $h$, one has
\begin{equation}
\eta=\underbrace{\frac{3}{7}\left({\rm tr}_{\varphi}(h)\right)\varphi}_{
\pi^{3}_{1}(\eta)}
+\underbrace{\frac{1}{2}\mathring{h}_{i}{}^{\ell}\varphi_{\ell jk}
dx^{ijk}}_{\pi^{3}_{27}(\eta)}+
\underbrace{\frac{1}{6}X^{\ell}\psi_{\ell ijk}dx^{ijk}}_{\pi^{3}_{7}(\eta)}.
\label{2.11}
\end{equation}

%%%%%%%%%%%%%%%%%%%%%%%%%%%%%%%%%%%%%%%%%%%%%%%%%%%%%%%%%%%%%%%%%%%%%%%%%%%%%%
\subsection{The torsion tensors of a $G_{2}$-structure}
\label{subsection2.2}
%%%%%%%%%%%%%%%%%%%%%%%%%%%%%%%%%%%%%%%%%%%%%%%%%%%%%%%%%%%%%%%%%%%%%%%%%%%%%%

By Hodge duality we obtain the $G_{2}$-decompositions of $4$-forms $\wedge^{4}(\mathcal{M})=\wedge^{4}_{1}(\mathcal{M},\varphi)
\oplus\wedge^{4}_{7}(\mathcal{M},\varphi)\oplus\wedge^{4}_{27}(\mathcal{M},
\varphi)$ and $5$-forms $
\wedge^{5}(\mathcal{M})=\wedge^{5}_{7}(\mathcal{M},\varphi)
\oplus\wedge^{5}_{14}(\mathcal{M},\varphi)$, respectively. By
definition, we can find forms $\tau_{0}\in C^{\infty}(\mathcal{M})$, $\tau_{1},
\widetilde{\tau}_{1}\in\wedge^{1}(\mathcal{M})$, $\tau_{2}\in\wedge^{2}_{14}(\mathcal{M},\varphi)$, and $\tau_{3}\in\wedge^{3}_{27}(\mathcal{M},\varphi)$ such that
\begin{equation}
d\varphi=\tau_{0}\psi+3\tau_{1}\wedge\varphi
+\ast_{\varphi}\tau_{3}, \ \ \ d\psi=4\widetilde{\tau}_{1}\wedge\psi
-\ast_{\varphi}\tau_{2}.\label{2.12}
\end{equation}
Since $\tau_{2}\in\wedge^{2}_{14}(\mathcal{M},\varphi)$, it follows that $\tau_{2}\wedge\varphi=-\ast_{\varphi}\tau_{2}$. Then (\ref{2.12}) can be written as in the sense of Bryant \cite{Bryant2005}
\begin{equation}
d\varphi=\tau_{0}\psi+3\tau_{1}\wedge\varphi
+\ast_{\varphi}\tau_{3}, \ \ \ d\psi=4\widetilde{\tau}_{1}\wedge\psi
+\tau_{2}\wedge\varphi.\label{2.13}
\end{equation}
It can be proved that $\tau_{1}=\widetilde{\tau}_{1}$ (see \cite{Karigiannis2007}). We call
$\tau_{0}$ the {\it scalar torsion}, $\tau_{1}$ the {\it vector torsion}, $\tau_{2}$ the {\it Lie algebra torsion}, and $\tau_{3}$ the {\it symmetric
traceless torsion}. We also call $\boldsymbol{\tau}_{\varphi}:=\{\tau_{0},\tau_{1},\tau_{2},
\tau_{3}\}$ the {\it intrinsic torsion forms} of the $G_{2}$-structure $\varphi$.

Recall that a $G_{2}$-structure $\varphi$ is torsion-free if and only if $d
\varphi=d\psi=0$ by Theorem \ref{t2.1}. From (\ref{2.12}) we see that $\varphi$ is torsion-free if and only if the intrinsic torsion forms $\boldsymbol{\tau}_{\varphi}\equiv=0$; that is, $\tau_{0}=\tau_{1}=\tau_{2}
=\tau_{3}=0$.

\begin{lemma}\label{l2.2} {\bf (Fern\'andez-Gray, \cite{Fernandez-Gray1982})} For any $X\in\mathfrak{X}(\mathcal{M})$, the $3$-form $\nabla_{X}
\varphi$ lines in the space $\wedge^{3}_{7}(\mathcal{M},\varphi)$. Therefore the covariant derivative $\nabla\varphi\in\wedge^{1}(\mathcal{M})
\otimes\wedge^{3}_{7}(\mathcal{M})$.
\end{lemma}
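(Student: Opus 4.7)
The strategy is to exploit the metric-compatibility of the Levi-Civita connection together with the infinitesimal deformation theory of $G_2$-structures. The essential observation is that the splitting $\wedge^3(\mathcal{M})=\wedge^3_1\oplus\wedge^3_7\oplus\wedge^3_{27}$ can be read off of the Lie algebra decomposition $\mathfrak{gl}(7,\mathbb{R})=\mathfrak{g}_2\oplus\mathfrak{g}_2^{\perp}\oplus\odot^2\mathbb{R}^7$: the summand $\wedge^3_7$ is precisely the tangent direction produced by the action of $\mathfrak{g}_2^{\perp}\cong\mathbb{R}^7$, while the summand $\wedge^3_1\oplus\wedge^3_{27}$ corresponds to a genuine variation of the induced metric. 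Thus the claim reduces to showing that $\nabla_X\varphi$ does not change the metric infinitesimally, which is immediate because $\nabla g_{\varphi}=0$.

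To make this precise I would proceed as follows. First, for a fixed point $x\in\mathcal{M}$ and vector $X\in T_x\mathcal{M}$, apply the decomposition (\ref{2.11}) pointwise to $\nabla_X\varphi\in\wedge^3(T^{\ast}_x\mathcal{M})$: there exist a symmetric $2$-tensor $h_X\in\odot^2(T^{\ast}_x\mathcal{M})$ (encoding both the $\wedge^3_1$ and $\wedge^3_{27}$ parts) and a vector $Y_X\in T_x\mathcal{M}$ such that
\begin{equation*}
\nabla_X\varphi \;=\; \textsf{i}_{\varphi}(h_X) \,+\, Y_X\lrcorner\psi.
\end{equation*}
Second, I would invoke the Karigiannis variation formula (see \cite{Karigiannis2005, Karigiannis2007}): if $\varphi_t$ is a smooth $1$-parameter family of $G_2$-structures with $\partial_t\varphi_t|_{t=0}=\textsf{i}_{\varphi}(h)+Y\lrcorner\psi$ for $h$ symmetric, then the induced variation of the associated metric is
\begin{equation*}
\partial_t g_{\varphi_t}\big|_{t=0} \;=\; 2h.
\end{equation*}
Applied along the flow of $X$ (or, equivalently, interpreting the covariant derivative as the first-order deformation in the direction $X$), this gives $\nabla_X g_{\varphi}=2h_X$.

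Third, since $\nabla$ is the Levi-Civita connection of $g_{\varphi}$, metric-compatibility forces $\nabla_X g_{\varphi}=0$, so $h_X=0$ at every point and for every $X$. Consequently $\nabla_X\varphi=Y_X\lrcorner\psi\in\wedge^3_7(\mathcal{M},\varphi)$, and the sectional statement $\nabla\varphi\in\wedge^1(\mathcal{M})\otimes\wedge^3_7(\mathcal{M},\varphi)$ follows. The only real obstacle is verifying the Karigiannis variation formula in the precise normalization compatible with the conventions (\ref{2.8})--(\ref{2.11}) adopted here; this is a somewhat delicate but standard linearization of the algebraic relation between a positive $3$-form and its induced metric (obtained by differentiating an identity of the shape $g_{\varphi}(X,Y)\det(g_{\varphi})^{1/9}=\frac{1}{6}(X\lrcorner\varphi)\wedge(Y\lrcorner\varphi)\wedge\varphi$), and I would simply cite it rather than reproduce the bookkeeping.
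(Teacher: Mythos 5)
The paper itself offers no proof of this lemma --- it simply cites Fern\'andez--Gray --- so your argument has to stand on its own. The overall strategy is sound and conceptually attractive: combine the metric variation formula $\partial_t g_{\varphi_t}=2h$ with the Levi-Civita compatibility $\nabla g_{\varphi}=0$ to force the $\wedge^3_1\oplus\wedge^3_{27}$ component of $\nabla_X\varphi$ to vanish. But the step where you link $\nabla_X\varphi$ to a variation is exactly where the argument, as written, has a genuine gap.

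You say you would apply the Karigiannis formula ``along the flow of $X$ (or, equivalently, interpreting the covariant derivative as the first-order deformation in the direction $X$).'' These two readings are \emph{not} equivalent. Pulling $\varphi$ back by the flow $\phi_t$ of $X$ produces a family $\varphi_t=\phi_t^{\ast}\varphi$ whose derivative at $t=0$ is the Lie derivative $\mathcal{L}_X\varphi$, not $\nabla_X\varphi$; they differ by terms in $\nabla X$. Worse for your argument, the corresponding metric derivative would be $\mathcal{L}_X g_{\varphi}$, which is generically nonzero (it vanishes only for Killing fields), so the conclusion $h_X=0$ would fail. To make the proof correct you should instead convert $\nabla_X\varphi$ into a genuine one-parameter family by parallel transport: fix $x$, take a curve $\gamma$ with $\gamma(0)=x$, $\gamma'(0)=X_x$, let $P_t:T_x\mathcal{M}\to T_{\gamma(t)}\mathcal{M}$ be parallel transport, and set $\tilde{\varphi}_t:=P_t^{\ast}\varphi(\gamma(t))\in\wedge^3(T_x^{\ast}\mathcal{M})$. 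Then $\tfrac{d}{dt}\tilde{\varphi}_t\big|_{t=0}=(\nabla_X\varphi)(x)$ by definition of the covariant derivative, and because the algebraic map $\varphi\mapsto g_{\varphi}$ commutes with linear pullback, the induced metric family is $\tilde{g}_t=P_t^{\ast}g_{\varphi}(\gamma(t))$, whose derivative at $t=0$ is $(\nabla_X g_{\varphi})(x)=0$. Now the pointwise form of the Karigiannis variation formula (Theorem \ref{t2.3}, equation (\ref{2.30})) applied to $\tilde{\varphi}_t$ gives $2h_X=0$, so $\nabla_X\varphi=Y_X\lrcorner\psi\in\wedge^3_7(\mathcal{M},\varphi)$ as desired. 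With that correction the proof is complete; the appeal to ``the flow of $X$'' should simply be deleted.
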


Consequently, there exists a $2$-tensor $\bT=\bT_{ij}dx^{i\otimes j}$, called the {\it full torsion tensor}, such that
\begin{equation}
\nabla_{\ell}\varphi=\bT_{\ell}{}^{n}\psi_{nabc}.\label{2.14}
\end{equation}
Equivalently,
\begin{equation}
\bT_{\ell m}=\frac{1}{24}(\nabla_{\ell}\varphi_{abc})
\psi_{m}{}^{abc}.\label{2.15}
\end{equation}
Write
\begin{eqnarray}
\tau_{1}&=&(\tau_{1})_{i}dx^{i}\in\wedge^{1}(\mathcal{M}),
\label{2.16}\\
\tau_{2}&=&\frac{1}{2}(\tau_{2})_{ab}dx^{ab}\in\wedge^{2}_{14}(\mathcal{M}),
\label{2.17}\\
\tau_{3}&=&\frac{1}{2}(\tau_{3})_{i}{}^{\ell}\varphi_{\ell ij}
dx^{ijk}\in\wedge^{3}_{27}(\mathcal{M},\varphi).\label{2.18}
\end{eqnarray}
The associated $2$-tensor $\boldsymbol{\tau}_{3}:=(\tau_{3})_{ij}dx^{i\otimes j}$ of $\tau_{3}$ lies in the space $\odot^{2}_{0}(\mathcal{M})$. With this convenience, the full torsion tensor $\bT_{\ell m}$ is determined by
\begin{equation}
\bT_{\ell m}=\frac{\tau_{0}}{4}g_{\ell m}-(\boldsymbol{\tau}_{3})_{\ell m}
-\left(\sharp_{\varphi}(\tau_{1})\lrcorner\varphi\right)_{\ell m}
-\frac{1}{2}(\tau_{2})_{\ell m}\label{2.19}
\end{equation}
or as $2$-tensors,
\begin{equation}
\bT=\frac{\tau_{0}}{4}g_{\varphi}
-\boldsymbol{\tau}_{3}-\sharp_{\varphi}(\tau_{1})\lrcorner\varphi
-\frac{1}{2}\tau_{2}.\label{2.20}
\end{equation}
Here the $2$-form $\sharp_{\varphi}(\tau_{1})\lrcorner\varphi$ is defined by \begin{equation*}
\sharp_{\varphi}(\tau_{1})\lrcorner\varphi
=\frac{1}{2}\left(\sharp_{\varphi}(\tau_{1})\lrcorner
\varphi\right)dx^{a\wedge b}=\frac{1}{2}\left((\tau_{1})_{k}\varphi^{k}{}_{ab}
\right)dx^{a\wedge b}.
\end{equation*}
As an application, this gives another proof of Theorem \ref{t2.1}.
\\

For fixed indices $i$ and $j$, set
\begin{equation}
R_{ij|k\ell}:=R_{ijk\ell} \ \text{is skew-symmetric in} \ k \
\text{and} \ \ell,\label{2.21}
\end{equation}
where
\begin{equation}
R_{ij|\bullet\bullet}:=\frac{1}{2}R_{ij|k\ell}dx^{k\ell}
=\frac{1}{2}R_{ijk\ell}dx^{k\ell}\in\wedge^{2}
(\mathcal{M}).\label{2.22}
\end{equation}
Then, according to (\ref{2.5}) and (\ref{2.6})
\begin{equation*}
R_{ijk\ell}=R_{ij|k\ell}=\left(\pi^{2}_{7}(R_{ij|\bullet\bullet})
\right)_{k\ell}+\left(\pi^{2}_{14}(R_{ij|\bullet\bullet})\right)_{k\ell},
\end{equation*}
where
\begin{eqnarray*}
\left(\pi^{2}_{7}(R_{ij|\bullet\bullet})\right)_{k\ell}
&=&\frac{1}{3}R_{ij|k\ell}+\frac{1}{6}R_{ij|ab}\psi^{ab}{}_{k\ell} \ \ = \ \
\frac{1}{3}R_{ijk\ell}+\frac{1}{6}R_{ijab}\psi^{ab}{}_{k\ell},\\
\left(\pi^{2}_{14}(R_{ij|\bullet\bullet})\right)_{k\ell}&=&
\frac{2}{3}R_{ij|k\ell}-\frac{1}{6}R_{ij|ab}\psi^{ab}{}_{k\ell} \ \ = \ \
\frac{1}{3}R_{ijk\ell}-\frac{1}{6}R_{ijab}\psi^{ab}{}_{k\ell}.
\end{eqnarray*}
Karigiannis \cite{Karigiannis2007} (see also the equivalent
formula obtained by Bryant in \cite{Bryant2005})
proved that the Ricci curvature is given by
\begin{eqnarray}
R_{jk}&=&R_{ijk\ell}g^{i\ell} \ \ = \ \ 3\left(\pi^{2}_{7}(R_{ij|\bullet\bullet})\right)_{k\ell}g^{i\ell} \ \ = \ \ \frac{3}{2}\left(\pi^{2}_{14}(R_{ij|\bullet\bullet})\right)_{k\ell}
g^{i\ell}\nonumber\\
&=&-\left(\nabla_{i}\bT_{jm}-\nabla_{j}\bT_{im}\right)\varphi^{m}{}_{
k}{}^{i}-\bT_{j}{}^{i}\bT_{ik}+\left({\rm tr}_{\varphi}\bT\right)\bT_{jk}
+\bT_{jb}\bT_{ia}\psi^{iab}{}_{k},\label{2.23}\\
&=&-\nabla_{i}\left(\bT_{j}{}^{n}\varphi_{nk}{}^{i}\right)
+\nabla_{j}\left(\bT_{i}{}^{n}\varphi_{nk}{}^{i}\right)
-\bT_{j}{}^{i}\bT_{ik}+\left({\rm tr}_{\varphi}\bT\right)
\bT_{jk}-\bT_{jb}\bT_{ia}\psi^{iab}{}_{k}.\nonumber
\end{eqnarray}
Cleyton and Ivanov \cite{Cleyton-Ivanov2007} also derived a formula for the Ricci tensor for closed $G_{2}$-structures in terms of
$d^{\ast}_{\varphi}\varphi$. Taking the trace of (\ref{2.23}), we
obtain Btyant's formula \cite{Bryant2005} for the scalar curvature
\begin{eqnarray}
\ \ R&=&-12\nabla^{\ell}(\tau_{1})_{\ell}
+\frac{21}{8}\tau^{2}_{0}-||\boldsymbol{\tau}_{3}||^{2}_{\varphi}
+5||\sharp_{\varphi}(\tau_{1})\lrcorner\varphi||^{2}_{\varphi}-\frac{1}{4}||\tau_{2}||^{2}_{\varphi},
\nonumber\\
&=&-12\nabla^{\ell}(\tau_{1})_{\ell}
+\frac{21}{8}\tau^{2}_{0}-||\boldsymbol{\tau}_{3}||^{2}_{\varphi}
+30|\tau_{1}|^{2}_{\varphi}-\frac{1}{2}|\tau_{2}|^{2}_{\varphi},
\label{2.24}
\end{eqnarray}

For a closed $G_{2}$-structure, we have $\tau_{0}=\tau_{1}=\tau_{3}=0$
and then $R=-\frac{1}{4}||\tau_{2}||^{2}_{\varphi}\leq0$. On the other hand, we have $(\tau_{2})_{ij}=-2\bT_{ij}$ by (\ref{2.20}). Thus the full torsion tensor $\bT$ is actually a $2$-form
\begin{equation}
\bT=\frac{1}{2}\bT_{ij}dx^{ij}\in\wedge^{2}(\mathcal{M})\label{2.25}
\end{equation}
and the scalar curvature can be written in terms of $T$
\begin{equation}
R=-||\bT||^{2}_{\varphi}=-2|\bT|^{2}_{\varphi}\leq0.\label{2.26}
\end{equation}
Hence, for closed $G_{2}$-structures, scalar curvatures are always non-positive.
\\

Finally, we mention a Bianchi type identity
\begin{equation}
\nabla_{i}\bT_{j\ell}-\nabla_{j}\bT_{i\ell}
=-\frac{1}{2}R_{ijab}\varphi^{ab}{}_{\ell}
-\bT_{ia}\bT_{jb}\varphi^{ab}{}_{\ell}
=-\left(\frac{1}{2}R_{ijab}+\bT_{ia}\bT_{jb}\right)
\varphi^{ab}{}_{\ell}.\label{2.27}
\end{equation}
The proof can be found in \cite{Karigiannis2007}.

%%%%%%%%%%%%%%%%%%%%%%%%%%%%%%%%%%%%%%%%%%%%%%%%%%%%%%%%%%%%%%%%%%%%%%%%%%%%%%
\subsection{General flows on $G_{2}$-structures}\label{section2.3}
%%%%%%%%%%%%%%%%%%%%%%%%%%%%%%%%%%%%%%%%%%%%%%%%%%%%%%%%%%%%%%%%%%%%%%%%%%%%%%

For any family $(\varphi_{t})_{t}$ of $G_{2}$-structures, according to the decomposition
(\ref{2.10}), we can consider the general flow
\begin{equation}
\partial_{t}\varphi_{t}=\textsf{i}_{\varphi_{t}}(h_{t})
+X_{t}\lrcorner\psi_{t}\label{2.28}
\end{equation}
where $h_{t}\in\odot^{2}(\mathcal{M})$ and $X_{t}\in\mathfrak{X}
(\mathcal{M})$. The general flow (\ref{2.28}) locally can be written as
\begin{equation}
\partial_{t}\varphi_{ijk}=h_{i}{}^{\ell}\varphi_{\ell jk}
+h_{j}{}^{\ell}\varphi_{i\ell k}+h_{k}{}^{\ell}\varphi_{ij\ell}
+X^{\ell}\psi_{\ell ijk}.\label{2.29}
\end{equation}
We write for $g_{t}$ and $dV_{t}$ the metric and
volume form associated to $\varphi_{t}$, respectively.

\begin{theorem}\label{t2.3} Under the general flow (\ref{2.28}), we have
\begin{eqnarray}
\partial_{t}g_{ij}&=&2h_{ij}\label{2.30},\\
\partial_{t}g^{ij}&=&-2h^{ij},\label{2.31}\\
\partial_{t}dV_{t}&=&\left({\rm tr}_{t}h_{t}\right)dV_{t},\label{2.32}\\
\partial_{t}\bT_{pq}&=&\bT_{p}{}^{m}h_{mq}
-\bT_{p}{}^{m}X^{k}\varphi_{kmq}-(\nabla_{k}h_{ip})\varphi^{ki}{}_{q}
+\nabla_{p}X_{q}.\label{2.33}
\end{eqnarray}
\end{theorem}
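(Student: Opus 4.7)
The plan is to reduce everything to the variational identities for $g_\varphi$ and then differentiate the defining relation $\nabla_\ell\varphi_{abc}=\bT_\ell{}^n\psi_{nabc}$ of the full torsion. The main structural input is the $G_2$-decomposition of $3$-forms recalled in (2.11): the component of $\partial_t\varphi_t$ living in $\wedge^3_1(\mathcal{M},\varphi_t)\oplus\wedge^3_{27}(\mathcal{M},\varphi_t)$, namely $\textsf{i}_{\varphi_t}(h_t)$, encodes all the change of the metric, while the $\wedge^3_7$ part $X_t\lrcorner\psi_t$ is pure ``gauge'' as far as $g_t$ is concerned. Once (2.30) is established, formulas (2.31) and (2.32) are immediate consequences.

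First I would prove (2.30). One writes the cubic relation that determines $g_\varphi$ from $\varphi$, for instance the pointwise formula
\begin{equation*}
g_\varphi(X,Y)\,dV_\varphi \;=\; \tfrac{1}{6}\,(X\lrcorner\varphi)\wedge(Y\lrcorner\varphi)\wedge\varphi,
\end{equation*}
differentiate in $t$ and substitute $\partial_t\varphi_t=\textsf{i}_{\varphi_t}(h_t)+X_t\lrcorner\psi_t$. The $X_t\lrcorner\psi_t$ piece, being in $\wedge^3_7(\mathcal{M},\varphi_t)$, contributes only to the skew-symmetric part (this is exactly the content of Karigiannis' computation), while the contribution from $\textsf{i}_{\varphi_t}(h_t)$ can be evaluated using (2.8) and the standard contraction identities for $\varphi$ and $\psi$. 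Matching symmetric parts yields $\partial_t g_{ij}=2h_{ij}$. Formula (2.31) is then the identity $\partial_t g^{ij}=-g^{ik}g^{j\ell}\partial_t g_{k\ell}$, and (2.32) follows from $\partial_t\sqrt{\det g_t}=\tfrac{1}{2}\sqrt{\det g_t}\,g_t^{ij}\partial_t g_{ij}=(\mathrm{tr}_t h_t)\sqrt{\det g_t}$.

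The substantive step is (2.33). Starting from (2.15), or equivalently from $\nabla_\ell\varphi_{abc}=\bT_\ell{}^n\psi_{nabc}$, I would apply $\partial_t$ and commute it past $\nabla_\ell$ using the standard Christoffel variation
\begin{equation*}
\partial_t\Gamma^k_{ij}\;=\;g^{k\ell}\!\left(\nabla_i h_{j\ell}+\nabla_j h_{i\ell}-\nabla_\ell h_{ij}\right).
\end{equation*}
On the left one substitutes the flow (2.29), producing terms of two types: $\nabla h$ contracted with $\varphi,\psi$, and $\nabla X$ contracted with $\psi$ and $\nabla\varphi$ (the latter absorbed into $\bT$ via (2.14)). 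On the right $\partial_t\psi_{nabc}$ must itself be computed; this follows from $\psi=\ast_\varphi\varphi$ and the already-established (2.30)–(2.32), and produces terms of the schematic form $h\ast\psi+X\ast\varphi$. After collecting everything, contracting with $\psi_q{}^{abc}$ and using the standard $G_2$ contraction identity $\psi_{nabc}\psi_q{}^{abc}=24\,g_{nq}$ (together with its cousins that mix $\varphi$ and $\psi$) isolates $\partial_t\bT_{pq}$.

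The main obstacle is bookkeeping in this last step: several contractions of $\nabla h$ with $\varphi,\psi$ that arise from the Christoffel variation and from $\partial_t\psi$ must cancel or combine, via the well-known $\varphi\psi$ and $\psi\psi$ identities, into the single clean term $-(\nabla_k h_{ip})\varphi^{ki}{}_q$ appearing in (2.33); similarly, the $X\lrcorner\psi$ contributions must recombine into $\bT_p{}^m h_{mq}-\bT_p{}^m X^k\varphi_{kmq}+\nabla_p X_q$. Provided one tracks the decompositions carefully and uses the identities in the form given by Karigiannis \cite{Karigiannis2007}, the stated expression for $\partial_t\bT_{pq}$ drops out.
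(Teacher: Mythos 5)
The paper does not prove Theorem~\ref{t2.3} itself; it simply writes ``See \cite{Karigiannis2007}'', and the derivation there (linearizing the map $\varphi\mapsto g_\varphi$ via the Hitchin/Bryant cubic form to get (\ref{2.30}), the algebraic consequences (\ref{2.31})--(\ref{2.32}), and then differentiating $\nabla_\ell\varphi_{abc}=\bT_\ell{}^{n}\psi_{nabc}$ with the Christoffel variation, the flow equation for $\partial_t\varphi$, the induced $\partial_t\psi$, and the contraction $\psi_{nabc}\psi_{q}{}^{abc}=24\,g_{nq}$ to isolate $\partial_t\bT_{pq}$) follows exactly the route you sketch. Your proposal is therefore correct and is essentially the same argument as the cited reference; the only loose phrase is that $X\lrcorner\psi$ ``contributes only to the skew-symmetric part'' --- more precisely $\wedge^3_7$ lies in the kernel of the linearization of $\varphi\mapsto g_\varphi$, so it contributes nothing to $\partial_t g_{ij}$ --- but this does not affect the argument.
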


These evolution equations can be found in \cite{Karigiannis2007}.

%%%%%%%%%%%%%%%%%%%%%%%%%%%%%%%%%%%%%%%%%%%%%%%%%%%%%%%%%%%%%%%%%%%%%%%%%%%%%%
%%%%%%%%%%%%%%%%%%%%%%%%%%%%%%%%%%%%%%%%%%%%%%%%%%%%%%%%%%%%%%%%%%%%%%%%%%%%%%
\section{Laplacian flows on closed $G_{2}$-structures}\label{section3}
%%%%%%%%%%%%%%%%%%%%%%%%%%%%%%%%%%%%%%%%%%%%%%%%%%%%%%%%%%%%%%%%%%%%%%%%%%%%%%
%%%%%%%%%%%%%%%%%%%%%%%%%%%%%%%%%%%%%%%%%%%%%%%%%%%%%%%%%%%%%%%%%%%%%%%%%%%%%%

We now consider the Laplacian flow for closed $G_{2}$-structures
\begin{equation}
\partial_{t}\varphi_{t}=\Delta_{\varphi_{t}}\varphi_{t}=\Delta_{t}\varphi_{t}
, \ \ \ \varphi_{0}=\varphi,\label{3.1}
\end{equation}
where $\Delta_{\varphi_{t}}\varphi_{t}=dd^{\ast}_{\varphi_{t}}
\varphi_{t}+d^{\ast}_{\varphi_{t}}
d\varphi_{t}$ is the Hodge Laplacian of $g_{\varphi_{t}}$ and $\varphi$ is an initial closed
$G_{2}$-structure. The short time existence for (\ref{3.1}) was proved
by Bryant and Xu \cite{Bryant-Xu2011}, see also Theorem \ref{t1.1}.

A criterion for the long time existence for the Lapalcian flow on compact manifolds was
given in Theorem \ref{t1.2}. In this section, we give a new elementary
proof of Lotay-Wei's result in compact case.

%and then extend it to the complete and noncompact case.

%%%%%%%%%%%%%%%%%%%%%%%%%%%%%%%%%%%%%%%%%%%%%%%%%%%%%%%%%%%%%%%%%%%%%%%%%%%%%%
\subsection{Basic theory of closed $G_{2}$-structures}
\label{subsection3.1}
%%%%%%%%%%%%%%%%%%%%%%%%%%%%%%%%%%%%%%%%%%%%%%%%%%%%%%%%%%%%%%%%%%%%%%%%%%%%%%

Let $\wedge^{3}_{+,\bullet}(\mathcal{M})\subset\wedge^{3}_{+}
(\mathcal{M},\varphi)$ be the set of all closed $G_{2}$-structures
on $\mathcal{M}$. If $\varphi\in\wedge^{3}_{+,\bullet}(\mathcal{M})$ is closed, i.e., $d\varphi=0$, then $\tau_{0}, \tau_{1}, \tau_{3}$ are all zero, so the only nonzero torsion form is
\begin{equation}
\boldsymbol{\tau}\equiv\tau_{2}=\frac{1}{2}(\tau_{2})_{ij}dx^{ij}
=\frac{1}{2}\boldsymbol{\tau}_{ij}
dx^{ij}.\label{3.2}
\end{equation}
According to (\ref{2.20}) and (\ref{2.25}), we have $\bT_{ij}=-\frac{1}{2}\boldsymbol{\tau}_{ij}$ so that
\begin{equation}
\bT\equiv\frac{1}{2}\bT_{ij}dx^{ij} \ \ \ \text{or equivalently} \ \ \ \bT=-\frac{1}{2}\boldsymbol{\tau},\label{3.3}
\end{equation}
is a $2$-form. Since $d\psi=\boldsymbol{\tau}\wedge\varphi=-\ast_{\varphi}
\boldsymbol{\tau}$, we get $d^{\ast}_{\varphi}\boldsymbol{\tau}=\ast_{\varphi}d\ast_{\varphi}
\boldsymbol{\tau}=-\ast_{\varphi}d^{2}\psi=0$ which is given in local coordinates by
\begin{equation}
\nabla^{i}\boldsymbol{\tau}_{ij}=0\label{3.4}
\end{equation}

For a closed $G_{2}$-structure $\varphi$, according to (\ref{2.23}), the Ricci curvature is given by (in this case $\bT_{ij}$ is a $2$-form)
\begin{equation*}
R_{jk}=\left(\nabla_{j}\bT_{im}-\nabla_{i}\bT_{jm}\right)\varphi^{m}{}_{k}{}^{i}
-\bT_{j}{}^{i}\bT_{ik}+\bT_{jb}\bT_{ia}\psi^{iab}{}_{k}.
\end{equation*}
Since $\boldsymbol{\tau}\in\wedge^{2}_{14}(\mathcal{M},\varphi)$ and $\bT_{ij}=-\frac{1}{2}\boldsymbol{\tau}_{ij}$, it follows from \cite{LW2017} (see page 179 -- 180) that
\begin{equation}
(\nabla_{j}\bT_{im})\varphi^{m}{}_{k}{}^{i}
=2\bT_{j}{}^{\ell}\bT_{\ell k}.\label{3.5}
\end{equation}
and therefore, for a closed $G_{2}$-structure $\varphi$, the Ricci curvature is given by
\begin{equation}
R_{jk}=-(\nabla_{i}\bT_{jm})\varphi_{k}{}^{im}
-\bT_{j}{}^{i}\bT_{ik}.\label{3.6}
\end{equation}
Taking the trace of (\ref{3.6}) yields (\ref{2.26}). Moreover, the factor $\nabla_{i}\bT_{jm}$ in (\ref{3.6}) can be
expressed as (see Proposition 2.4 in \cite{LW2017})
\begin{eqnarray}
\nabla_{i}\bT_{jk}&=&-\frac{1}{4}R_{ijmn}\varphi_{k}{}^{mn}
-\frac{1}{4}R_{kjmn}\varphi_{i}{}^{mn}+\frac{1}{4}R_{ikmn}\varphi_{j}{}^{mn}\nonumber\\
&&- \ \frac{1}{2}\bT_{im}\bT_{jn}\varphi_{k}{}^{mn}
-\frac{1}{2}\bT_{km}\bT_{jn}\varphi_{i}{}^{mn}
+\frac{1}{2}\bT_{im}\bT_{kn}\varphi_{j}{}^{mn}.\label{3.7}
\end{eqnarray}

If $\varphi$ is a closed $G_{2}$-structure, Section 2.2 in \cite{LW2017}
shows that $\pi^{3}_{7}(\Delta_{\varphi}\varphi)=0$ and hence, according
to (\ref{2.10}),
\begin{equation}
\Delta_{\varphi}\varphi=\textsf{i}_{\varphi}(h)\in\wedge^{3}_{1}(\mathcal{M},\varphi)
\oplus\wedge^{3}_{27}(\mathcal{M},\varphi),\label{3.8}
\end{equation}
where
\begin{equation}
h_{ij}=\frac{1}{2}\nabla_{m}\boldsymbol{\tau}_{ni}\varphi_{j}{}^{mn}
-\frac{1}{6}|\boldsymbol{\tau}|^{2}_{\varphi}g_{ij}-\frac{1}{4}\boldsymbol{\tau}_{i}{}^{\ell}
\boldsymbol{\tau}_{\ell j}=-R_{ij}-\frac{2}{3}|\bT|^{2}_{\varphi}g_{ij}
-2\bT_{i}{}^{k}\bT_{kj}.\label{3.9}
\end{equation}
Here $|\bT|^{2}_{\varphi}=\frac{1}{2}\bT_{k\ell}\bT^{k\ell}
=\frac{1}{2}||\bT||^{2}_{\varphi}$.

%%%%%%%%%%%%%%%%%%%%%%%%%%%%%%%%%%%%%%%%%%%%%%%%%%%%%%%%%%%%%%%%%%%%%%%%%%%%%%
\subsection{Evolution equations for closed $G_{2}$-structures}
\label{subsection3.2}
%%%%%%%%%%%%%%%%%%%%%%%%%%%%%%%%%%%%%%%%%%%%%%%%%%%%%%%%%%%%%%%%%%%%%%%%%%%%%%

Since the Laplacian flow (\ref{3.1}) preserves the closedness of $\varphi_{t}$, it follows from (\ref{3.10}) that we have
\begin{equation}
\Delta_{\varphi_{t}}\varphi_{t}=\textsf{i}_{\varphi_{t}}(h_{t})
\in\wedge^{3}_{1}(\mathcal{M},
\varphi_{t})\oplus\wedge^{3}_{27}(\mathcal{M},\varphi_{t}),
\label{3.10}
\end{equation}
where
\begin{equation}
h_{ij}=-R_{ij}-\frac{2}{3}|\bT_{t}|^{2}_{t}g_{ij}-2\bT_{i}{}^{k}\bT_{kj}.
\label{3.11}
\end{equation}
From Theorem \ref{t2.3}, we see that the associated metric tensor
$g_{t}$ evolves by
\begin{equation}
\partial_{t}g_{ij}=2h_{ij}=-2R_{ij}
-\frac{4}{3}|\bT_{t}|^{2}_{t}g_{ij}-4\bT_{i}{}^{k}\bT_{kj}.\label{3.12}
\end{equation}
and the volume form $dV_{t}$ evolves by
\begin{equation}
\partial_{t}dV_{t}=({\rm tr}_{t}h_{t})dV_{t}=
\left(-R-\frac{14}{3}|\bT_{t}|^{2}_{t}+4|\bT_{t}|^{2}_{t}\right)
dV_{t}
=\frac{4}{3}|\bT_{t}|^{2}_{t}dV_{t}.\label{3.13}
\end{equation}
Hence, along the flow (\ref{3.1}), the volume of $g_{t}$ is
nondecreasing.
\\

Introduce the following notions
\begin{equation}
\blacksquare_{t}:=\partial_{t}-\blacktriangle_{t}, \ \ \ |\cdot|_{t}:=|\cdot|_{\varphi_{t}}, \ \ \ \Delta_{t}:=\Delta_{\varphi_{t}},\label{3.14}
\end{equation}
where $\blacktriangle_{t}:=g^{ij}\nabla_{i}\nabla_{j}$ is the usual Laplacian of
$g_{t}$ and $\Delta_{t}$ is the Hodge Laplacian of $g_{t}$, and also the $2$-tenor ${\rm Sic}_{t}$ with components
\begin{equation}
S_{ij}:=R_{ij}+\frac{2}{3}|\bT_{t}|^{2}_{t}g_{ij}
+2\bT_{i}{}^{k}\bT_{kj}=-h_{ij}.\label{3.15}
\end{equation}
Then the evolution equation (\ref{3.12}) can be written as
\begin{equation}
\partial_{t}g_{ij}=-2S_{ij}.\label{3.16}
\end{equation}
Moreover, the trace of ${\rm Sic}_{t}$ is exactly the scalar
curvature, up to a multiplying constant,
\begin{equation}
S_{t}:={\rm tr}_{t}{\rm Sic}_{t}=R_{t}+\frac{14}{3}|\bT_{t}|^{2}_{t}
-4|\bT_{t}|^{2}_{t}=-\frac{4}{3}|\bT_{t}|^{2}_{t}=\frac{2}{3}R_{t}.
\label{3.17}
\end{equation}
It was proved in \cite{LW2017} that
\begin{equation}
|\Delta_{t}\varphi_{t}|^{2}_{t}
=({\rm tr}_{t}h_{t})^{2}+2||h_{t}||^{2}_{t}
=\frac{16}{9}|\bT_{t}|^{4}_{t}+2||{\rm Sic}_{t}||^{2}_{t}.\label{3.18}
\end{equation}
This identity together with (\ref{2.26}) shows that the boundedness
of $\Delta_{t}\varphi_{t}$ is equivalent to the boundedness
of ${\rm Ric}_{t}$.
\\

The evolution equation (\ref{2.33}) implies that for the Laplacian
flow on closed $G_{2}$-structures, the torsion $T_{ij}$ evolves by
evolves
\begin{equation}
\partial_{t}\bT_{ij}=\bT_{i}{}^{k}h_{kj}
-(\nabla_{m}h_{ni})\varphi_{j}{}^{mn}.\label{3.19}
\end{equation}
Furthermore, we can prove

\begin{proposition}\label{p3.1} Under the flow (\ref{3.1}), we have
\begin{eqnarray}
\blacksquare_{t}\bT_{ij}&=&3R_{j}{}^{k}\bT_{ki}-R_{i}{}^{k}\bT_{kj}
-\frac{1}{2}R_{ijmk}\bT^{mk}-\frac{1}{2}R_{mpi}{}^{k}R_{qk}\psi_{j}{}^{pqm}-
\frac{2}{3}\varphi_{ji}{}^{m}
\nabla_{m}|\bT_{t}|^{2}_{t}\nonumber\\
&&+ \ \nabla_{p}\bT_{qi}\left(\bT^{pk}\varphi_{kj}{}^{q}
-2\bT^{qk}\varphi_{kj}{}^{p}\right)-\frac{2}{3}|\bT_{t}|^{2}_{t}
\bT_{ij}
-4T_{i}{}^{k}\bT_{k}{}^{m}\bT_{mj}.\label{3.20}
\end{eqnarray}
\end{proposition}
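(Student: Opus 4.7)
The plan is to start from the evolution equation (3.19) for $\bT$ derived from the general flow, and then produce $\blacksquare_t \bT_{ij} = \partial_t \bT_{ij} - \blacktriangle_t \bT_{ij}$ by direct termwise computation. The key ingredients are: the formula (3.11) for $h_{ij}$; the identity (3.7) expressing $\nabla_i \bT_{jk}$ in terms of curvature and torsion contracted with $\varphi$; the Bianchi-type identity (2.27) for the skew part of $\nabla\bT$; the closed-$G_2$ identities (3.4)--(3.6); the contraction rule $\bT^{ab}\psi_{abij} = -2\bT_{ij}$, which follows from $\bT \in \wedge^2_{14}(\mathcal{M},\varphi_t)$; and the standard $\varphi\varphi$ contraction identity $\varphi_{ija}\varphi^{abc} = \delta_i{}^b\delta_j{}^c - \delta_i{}^c\delta_j{}^b + \psi_{ij}{}^{bc}$ that resolves pairs of $\varphi$'s into $g$-type and $\psi$-type pieces.

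First, I expand $\bT_i{}^k h_{kj}$ using (3.11). The skew-symmetry of $\bT$ combined with the symmetry of $\mathrm{Ric}_t$ converts the piece $-\bT_i{}^k R_{kj}$ into $+R_j{}^k \bT_{ki}$, and the remaining summands yield $-\tfrac{2}{3}|\bT_t|^2_t \bT_{ij} - 2\bT_i{}^k \bT_k{}^\ell \bT_{\ell j}$. Second, I expand $(\nabla_m h_{ni})\varphi_j{}^{mn}$ by Leibniz into three pieces: a Ricci-derivative part $(\nabla_m R_{ni})\varphi_j{}^{mn}$; a $\nabla|\bT_t|^2_t$ part, which after a single $g$-contraction collapses immediately to $-\tfrac{2}{3}\varphi_{ji}{}^m\nabla_m|\bT_t|^2_t$; and a part quadratic in $\bT$ and linear in $\nabla\bT$, to be processed below.

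The heart of the argument is the computation of the rough Laplacian $\blacktriangle_t \bT_{ij} = g^{mn}\nabla_m \nabla_n \bT_{ij}$. I take the $\nabla^m$-divergence of (3.7), use the second Bianchi identity for $\nabla^m R_{mij\ell}$ to trade $\nabla R$ for $\nabla\mathrm{Ric}$, apply the Ricci identity to swap second covariant derivatives, and then re-express $\mathrm{Ric}$ back through (3.6) where necessary. After using the $\varphi\varphi = g + \psi$ identity and $\bT^{ab}\psi_{ab ij} = -2\bT_{ij}$, the $(R\cdot\bT)\cdot\varphi\varphi$ contractions produce the Weitzenbock piece $-\tfrac{1}{2}R_{ijmk}\bT^{mk}$ and the Ricci-torsion terms whose coefficients will ultimately combine to $3R_j{}^k\bT_{ki} - R_i{}^k\bT_{kj}$; simultaneously, a $(R\cdot R)\cdot\varphi\varphi$ contribution emerges from the composition of the two curvature invocations and collapses, via the same identities, into $-\tfrac{1}{2}R_{mpi}{}^k R_{qk}\psi_j{}^{pqm}$.

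Finally I assemble $\blacksquare_t \bT_{ij}$ by subtracting the $\blacktriangle_t \bT_{ij}$ of step three from the sum of the first two steps. The residual $\nabla\bT \cdot \bT$ terms, after extracting their skew part via (2.27), repackage into the asymmetric combination $\nabla_p \bT_{qi}(\bT^{pk}\varphi_{kj}{}^q - 2\bT^{qk}\varphi_{kj}{}^p)$, and the cubic-torsion leftovers combine with the $-2\bT_i{}^k\bT_k{}^\ell \bT_{\ell j}$ from step one to form $-4\bT_i{}^k \bT_k{}^m \bT_{mj}$. The main obstacle is step three: the rough Laplacian $\blacktriangle_t\bT$ is not directly accessible, and assembling it through (3.7) entails tracking many $\varphi$-contractions; getting the precise coefficients $3$ and $-1$ in the Ricci-torsion terms, as well as the common coefficient $-\tfrac{1}{2}$ in both the Weitzenbock curvature term and the $RR\psi$ term, requires meticulous bookkeeping of the $\varphi\varphi = g+\psi$ decomposition applied over all intermediate pieces and careful use of $\bT \in \wedge^2_{14}$ to close the identities.
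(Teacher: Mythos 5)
The paper itself does not supply an argument for Proposition~\ref{p3.1}; its ``proof'' is simply a citation of \cite{LW2017}, where this is established by exactly the kind of direct computation you outline. Your first two steps are correct as far as they go: with $h_{kj}=-R_{kj}-\tfrac{2}{3}|\bT_{t}|^{2}_{t}g_{kj}-2\widehat{\bT}_{kj}$, the index gymnastics converting $-\bT_{i}{}^{k}R_{kj}$ into $R_{j}{}^{k}\bT_{ki}$ is right, as is the collapse of the $\nabla|\bT_{t}|^{2}_{t}$ piece of $-(\nabla_{m}h_{ni})\varphi_{j}{}^{mn}$ to $-\tfrac{2}{3}\varphi_{ji}{}^{m}\nabla_{m}|\bT_{t}|^{2}_{t}$ (using $\varphi_{j}{}^{m}{}_{i}=-\varphi_{ji}{}^{m}$), and the identification of the $-\tfrac{2}{3}|\bT_{t}|^{2}_{t}\bT_{ij}$ and cubic-in-$\bT$ contributions. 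Your step three---compute $\blacktriangle_{t}\bT_{ij}$ by differentiating (\ref{3.7}), invoking the second Bianchi and Ricci identities, and resolving $\varphi\varphi$ contractions through $\varphi_{ija}\varphi^{abc}=\delta_{i}{}^{b}\delta_{j}{}^{c}-\delta_{i}{}^{c}\delta_{j}{}^{b}+\psi_{ij}{}^{bc}$ together with $\bT^{ab}\psi_{abij}=-2\bT_{ij}$---is the correct mechanism, and it is also what is used in \cite{LW2017}.

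The one genuine reservation is that this third step, which you yourself flag as ``the heart of the argument,'' is stated as a plan rather than executed. All the delicate content of (\ref{3.20})---the coefficients $3$ and $-1$ on the Ricci-torsion terms, the common $-\tfrac{1}{2}$ on the Weitzenb\"ock term $R_{ijmk}\bT^{mk}$ and on the $R_{mpi}{}^{k}R_{qk}\psi_{j}{}^{pqm}$ term, and the precise asymmetric combination $\nabla_{p}\bT_{qi}(\bT^{pk}\varphi_{kj}{}^{q}-2\bT^{qk}\varphi_{kj}{}^{p})$---lives inside that unexecuted step, and that is exactly where a sign or coefficient error would be easiest to make and hardest to catch. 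As a blueprint your proposal is sound and takes the same route as the cited source; as a proof it still requires the $\blacktriangle_{t}\bT_{ij}$ computation to be carried through with the same care you apply to steps one and two.
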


\begin{proof} See \cite{LW2017}.
\end{proof}

For a geometric flow $\partial_{t}g_{ij}=\eta_{ij}$, for some symmetric $2$-tensor $\eta_{ij}$, we have
\begin{eqnarray*}
\partial R^{\ell}_{ijk}
&=&\frac{1}{2}g^{\ell p}\bigg(\nabla_{i}\nabla_{j}\eta_{kp}
+\nabla_{i}\nabla_{k}\eta_{jp}-\nabla_{i}\nabla_{p}\eta_{jk}\\
&&- \ \nabla_{j}\nabla_{i}\eta_{kp}-\nabla_{j}\nabla_{k}
\eta_{ip}+\nabla_{j}\nabla_{p}\eta_{ik}\bigg),\\
\partial_{t}R_{jk}&=&\frac{1}{2}g^{pq}
\left(\nabla_{q}\nabla_{j}\eta_{kp}+\nabla_{q}\nabla_{k}\eta_{jp}
-\nabla_{q}\nabla_{p}\eta_{jk}-\nabla_{j}\nabla_{k}\eta_{qp}\right),\\
\partial_{t}R_{t}&=&-\blacktriangle_{t}{\rm tr}_{t}\eta_{t}
+{\rm div}_{t}({\rm div}_{t}\eta_{t})
-R_{ij}h^{ij},
\end{eqnarray*}
where $({\rm div}_{t}\eta_{t})_{j}=\nabla^{i}\eta_{ij}$. Applying those evolution equations to $\eta_{ij}=-2R_{ij}-\frac{4}{3}|\bT_{t}|^{2}_{t}
g_{ij}-4\bT_{i}{}^{k}\bT_{kj}=-2S_{ij}$ we have
\begin{eqnarray*}
{\rm tr}_{t}\eta_{t}&=&-2R_{t}-\frac{28}{3}|\bT_{t}|^{2}_{t}
+8|\bT_{t}|^{2}_{t} \ \ = \ \ \frac{8}{3}|\bT_{t}|^{2}_{t},\\
({\rm div}_{t}\eta_{t})_{j}&=&-2\nabla^{i}R_{ij}-\frac{4}{3}\nabla_{j}|\bT_{t}|^{2}_{t}
-4\nabla^{i}\widehat{\bT}_{ij}\\
&=&-\nabla_{j}R_{t}-\frac{4}{3}\nabla_{j}|\bT_{t}|^{2}_{t}-4\nabla^{i}
\widehat{\bT}_{ij},\\
{\rm div}_{t}({\rm div}_{t}\eta_{t})&=&\nabla^{j}({\rm div}_{t}\eta_{t})_{j} \ \ = \ \ -\blacktriangle_{t}R_{t}-\frac{4}{3}\blacktriangle_{t}|\bT_{t}|^{2}_{t}
-4\nabla^{j}\nabla^{i}\widehat{\bT}_{ij},
\end{eqnarray*}
where the symmetric $2$-tensor $\widehat{\bT}$ is given by
\begin{equation}
\widehat{\bT}_{ij}:=\bT_{ik}\bT^{k}{}_{j}.\label{3.21}
\end{equation}
Plugging those identities into the above evolution equation for $R_{t}$,
we get
\begin{eqnarray*}
\partial_{t}R_{t}&=&-4\blacktriangle_{t}|\bT_{t}|^{2}_{t}-\blacktriangle_{t}R_{t}
-4\nabla^{j}\nabla^{i}\widehat{\bT}_{ij}-R^{ij}\left(-2R_{ij}-
\frac{4}{3}|\bT_{t}|^{2}_{t}g_{ij}
-4\widehat{\bT}_{ij}\right)\\
&=&\blacktriangle_{t}R_{t}-4\nabla^{j}\nabla^{i}\widehat{\bT}_{ij}
+2||{\rm Ric}_{t}||^{2}_{t}+\frac{4}{3}|\bT_{t}|^{2}_{t}R_{t}
+4R^{ij}\widehat{\bT}_{ij}
\end{eqnarray*}
which implies
\begin{equation}
\blacksquare_{t}R_{t}=2||{\rm Ric}_{t}||^{2}_{t}
-\frac{2}{3}R^{2}_{t}-4\nabla^{j}\nabla^{i}\widehat{\bT}_{ij}+4\langle\langle{\rm Ric}_{t},\widehat{\bT}\rangle\rangle_{t}.\label{3.22}
\end{equation}
Observe that the last two terms on the right-hand side of (\ref{3.22})
are not determined of their signs. In the following, we shall use
the identity
\begin{equation}
\nabla^{i}\bT_{ij}
=0\label{3.23}
\end{equation}
follows from from (\ref{3.3}) and (\ref{3.4}), to simplify those
two terms. Using the identity (\ref{3.23}), the term $\nabla^{j}\nabla^{i}\widehat{\bT}_{ij}$ can be simplified as follows.
\begin{eqnarray*}
\nabla^{j}\nabla^{i}\widehat{\bT}_{ij}&=&
\nabla^{j}\nabla^{i}\left(\bT_{i}{}^{k}\bT_{kj}\right) \ \ = \ \
\nabla^{j}\left[(\nabla^{i}\bT_{i}{}^{k})\bT_{kj}+\bT_{i}{}^{k}
(\nabla^{i}\bT_{kj})
\right]\\
&=&\bT^{ik}(\nabla_{j}\nabla_{i}\bT_{k}{}^{j})-(\nabla^{j}\bT^{ik})
(\nabla_{i}\bT_{jk}).
\end{eqnarray*}
On the other hand, from the Ricci identity \begin{equation*}
\nabla_{j}\nabla_{i}\bT_{k}{}^{j}=\nabla_{i}\nabla_{j}\bT_{k}{}^{j}
-R_{jik\ell}\bT^{\ell j}-R_{ji}{}^{j\ell}\bT_{k\ell}
=R_{ijk\ell}\bT^{\ell j}+R_{i\ell}\bT_{k}{}^{\ell},
\end{equation*}
we see that the evolution equation (\ref{3.22}) is equivalent to
\begin{equation}
\blacksquare_{t}R_{t}=2||{\rm Ric}_{t}||^{2}_{t}
-\frac{2}{3}R^{2}_{t}+4R_{ijk\ell}\bT^{ik}\bT^{j\ell}+4(\nabla^{j}\bT^{ik})
(\nabla_{i}\bT_{jk}).\label{3.24}
\end{equation}
From (\ref{3.15}) and (\ref{3.21}) we can rewrite the term $||{\rm
Ric}_{t}||^{2}_{t}$ in (\ref{3.24}) in terms of ${\rm Sic}_{t}$ according to the following relation:
$$
||{\rm Sic}_{t}||^{2}_{t} \ \ = \ \ \left(R_{ij}+\frac{2}{3}|\bT_{t}|^{2}_{t}g_{ij}
+2\widehat{\bT}_{ij}\right)\left(R^{ij}+\frac{2}{3}|\bT_{t}|^{2}_{t}g^{ij}
+2\widehat{\bT}^{ij}\right)
$$
$$
= \ \ ||{\rm Ric}_{t}||^{2}_{t}+\frac{4}{3}|\bT_{t}|^{2}_{t}R_{t}
+4\langle\langle{\rm Ric}_{t},\widehat{\bT}_{t}\rangle\rangle_{t}
+\frac{28}{9}|\bT_{t}|^{4}_{t}+\frac{8}{3}|\bT_{t}|^{2}_{t}
{\rm tr}_{t}\widehat{\bT}_{t}+4||\widehat{\bT}_{t}||^{2}_{t}
$$
$$
= \ \ ||{\rm Ric}_{t}||^{2}_{t}-\frac{2}{3}R^{2}_{t}
+4\langle\langle{\rm Ric}_{t}, \widehat{\bT}_{t}\rangle\rangle_{t}
+\frac{7}{9}R^{2}_{t}-\frac{4}{3}R^{2}_{t}+4||\widehat{\bT}_{t}||^{
2}_{t}
$$
$$
= \ \ ||{\rm Ric}_{t}||^{2}_{t}+4||\widehat{\bT}_{t}||^{2}_{t}
+4\langle\langle{\rm Ric}_{t},\widehat{\bT}_{t}\rangle\rangle_{t}
-\frac{11}{9}R^{2}_{t},
$$
where we used ${\rm tr}_{t}\widehat{\bT}_{t}=g^{ij}\bT_{ik}\bT^{k}{}_{j}
=\bT_{ik}\bT^{ki}=-2|\bT_{t}|^{2}_{t}$ and $R_{t}=-2|\bT_{t}|^{2}_{t}$.
Replacing $R_{t}$ by $S_{t}$ according to the identity (\ref{3.17}),
we can rewrite (\ref{3.24}) as
\begin{eqnarray*}
\blacksquare_{t}S_{t}&=&\frac{4}{3}||{\rm Sic}_{t}||^{2}_{t}
-\frac{16}{3}||\widehat{\bT}||^{2}_{t}-\frac{16}{3}\langle\langle{\rm Ric}_{t},
\widehat{\bT}_{t}\rangle\rangle_{t}+\frac{32}{27}R^{2}_{t}\\
&&+ \ \frac{8}{3}R_{ijk\ell}\bT^{ik}\bT^{j\ell}
+\frac{8}{3}(\nabla^{j}\bT^{ik})(\nabla_{i}\bT_{jk}).
\end{eqnarray*}
Similarly, replacing $\langle\langle{\rm Ric}_{t},\widehat{\bT}_{t}\rangle
\rangle_{t}$ by $\langle\langle{\rm Sic}_{t}, \widehat{\bT}_{t}\rangle
\rangle_{t}$ with respect to the identity
\begin{equation*}
\langle\langle{\rm Sic}_{t},\widehat{\bT}_{t}\rangle\rangle
=\left(R_{ij}+\frac{2}{3}|\bT_{t}|^{2}_{t}g_{ij}+2\widehat{\bT}_{ij}
\right)\widehat{\bT}^{ij}=\langle\langle{\rm Ric}_{t},\widehat{\bT}_{t}\rangle\rangle_{t}
-\frac{1}{3}R^{2}_{t}+2||\widehat{\bT}_{t}||^{2}_{t},
\end{equation*}
we obtain the following evolution equation for $S_{t}$,
\begin{equation}
\blacksquare_{t}S_{t}=\frac{4}{3}\left[\left|\left|{\rm Sic}_{t}-2\widehat{\bT}_{t}
\right|\right|^{2}_{t}-S^{2}_{t}\right]
+\frac{8}{3}\left[R_{ijk\ell}\bT^{ik}\bT^{j\ell}
+(\nabla^{j}\bT^{ik})(\nabla_{i}\bT_{jk})\right].\label{3.25}
\end{equation}
Next, we try to deal with the last bracket in (\ref{3.25}), which contains two terms $R_{ijk\ell}\bT^{ik}\bT^{j\ell}$ and $(\nabla^{j}\bT^{ik})(\nabla_{i}
\bT_{jk})$. Using (\ref{2.27}) and (\ref{3.7}), the term $(\nabla^{j}\bT^{ik})(\nabla_{i}
\bT_{jk})$ is equal to
$$
(\nabla^{j}\bT^{ik})(\nabla_{i}\bT_{jk}) \ \ = \ \
\left[\nabla^{i}\bT^{jk}+\left(\frac{1}{2}R^{ij}{}_{ab}
+\bT^{i}{}_{a}\bT^{j}{}_{b}\right)\varphi^{kab}\right]
\nabla_{i}\bT_{jk}
$$
$$
= \ \ ||\nabla_{t}\bT_{t}||^{2}_{t}
+\frac{1}{2}\left(\frac{1}{2}R^{ij}{}_{ab}+\bT^{i}{}_{a}\bT^{j}{}_{b}\right)\bigg[
-\frac{1}{2}R_{ijmn}\varphi^{mn}{}_{k}\varphi^{kab}
-\frac{1}{2}R_{kjmn}\varphi_{i}{}^{mn}\varphi^{kab}
$$
$$
+ \ \frac{1}{2}R_{ikmn}\varphi_{j}{}^{mn}\varphi^{kab}
-\bT_{im}\bT_{jn}\varphi^{mn}{}_{k}\varphi^{kab}
-\bT_{km}\bT_{jn}\varphi_{i}{}^{mn}\varphi^{kab}
+\bT_{im}\bT_{kn}\varphi_{j}{}^{mn}\varphi^{kab}\bigg].
$$
By symmetry the term
\begin{equation*}
\left(\frac{1}{2}R^{ij}{}_{ab}+\bT^{i}{}_{a}\bT^{j}{}_{b}\right)
\left(-\frac{1}{2}R_{kjmn}\varphi_{i}{}^{mn}\varphi^{kab}+
\frac{1}{2}R_{ikmn}\varphi_{j}{}^{mn}\varphi^{kab}\right)
\end{equation*}
is equal to, interchanging $i\leftrightarrow j$ and $a\leftrightarrow b$
in the second term,
\begin{equation*}
\left(\frac{1}{2}R^{ij}{}_{ab}+\bT^{i}{}_{a}\bT^{j}{}_{b}\right)
\left(-\frac{1}{2}R_{kjmn}\varphi_{i}{}^{mn}\varphi^{kab}\right)
+\left(\frac{1}{2}R^{ji}{}_{ba}+\bT^{j}{}_{b}\bT^{i}{}_{a}\right)
\left(\frac{1}{2}R_{jkmn}\varphi_{i}{}^{mn}\varphi^{kba}\right)
\end{equation*}
which is zero. Similarly, we have, by interchanging $m\leftrightarrow n$
and then $i\leftrightarrow j$, $a\leftrightarrow b$ in the first term,
$$
\left(\frac{1}{2}R^{ij}{}_{ab}+\bT^{i}{}_{a}\bT^{j}{}_{b}\right)
\left(-\bT_{km}\bT_{jn}\varphi_{i}{}^{mn}\varphi^{kab}
+\bT_{im}\bT_{kn}\varphi_{j}{}^{mn}\varphi^{kab}\right)
$$
$$
= \ \ \left(\frac{1}{2}R^{ij}{}_{ab}+\bT^{i}{}_{a}\bT^{j}{}_{b}\right)
\left(-\bT_{kn}\bT_{jm}\varphi_{i}{}^{nm}\varphi^{kab}
+\bT_{im}\bT_{kn}\varphi_{j}{}^{mn}\varphi^{kab}\right)
$$
$$
= \ \ \left(\frac{1}{2}R^{ij}{}_{ab}+\bT^{i}{}_{a}\bT^{j}{}_{b}\right)
\left(-\bT_{kn}\bT_{im}\varphi_{j}{}^{nm}\varphi^{kba}
+\bT_{im}\bT_{kn}\varphi_{j}{}^{mn}\varphi^{kab}\right) \ \ = \ \ 0.
$$
Therefore, using the identity $\varphi_{ijk}\varphi^{k}{}_{ab}
=g_{ia}g_{jb}-g_{ib}g_{ja}+\psi_{ijab}$ (see \cite{Karigiannis2007}), we arrive at
$$
(\nabla^{j}\bT^{ik})(\nabla_{i}\bT_{jk}) \ = \ ||\nabla_{t}\bT_{t}||^{2}_{t}-\frac{1}{2}\left(\frac{1}{2}R^{ij}{}_{ab}
+\bT^{i}{}_{a}\bT^{j}{}_{b}
\right)\left(\frac{1}{2}R_{ij}{}^{mn}+\bT_{i}{}^{m}\bT_{j}{}^{n}
\right)\varphi_{mnk}\varphi^{kab}
$$
$$
= \ \ ||\nabla_{t}\bT_{t}||^{2}_{t}-\frac{1}{2}\left(\frac{1}{2}R^{ij}{}_{ab}
+\bT^{i}{}_{a}\bT^{j}{}_{b}\right)
\left(\frac{1}{2}R_{ij}{}^{mn}+\bT_{i}{}^{m}\bT_{j}{}^{n}\right)\left(\delta^{a}_{m}\delta^{b}_{n}-\delta^{b}_{m}\delta^{a}_{n}
+\psi_{mn}{}^{ab}\right)
$$
$$
= \ \ ||\nabla_{t}\bT_{t}||^{2}_{t}
-\frac{1}{8}\left(R_{ijab}+2\bT_{ia}\bT_{jb}
\right)\bigg[\left(R^{ijab}+2\bT^{ia}\bT^{jb}\right)
$$
$$
- \ \left(R^{ijba}+2\bT^{ib}\bT^{ja}\right)
+\left(R^{ijmn}+2\bT^{im}\bT^{jn}\right)
\psi_{mn}{}^{ab}\bigg].
$$
Since, by our convention,
\begin{eqnarray*}
\left(R_{ijab}+2\bT_{ia}\bT_{jb}\right)
\left(R^{ijab}+2\bT^{ia}\bT^{jb}\right)
&=&||{\rm Rm}_{t}||^{2}_{t}+4R_{ijab}\bT^{ia}\bT^{jb}
+4||\bT_{t}||^{4}_{t},\\
\left(R_{ijab}+2\bT_{ia}\bT_{jb}\right)
\left(R^{ijba}+2\bT^{ib}\bT^{ja}\right)&=&
-||{\rm Rm}_{t}||^{2}_{t}-4R_{ijab}\bT^{ia}\bT^{jb}
+4||\widehat{\bT}_{t}||^{2}_{t},
\end{eqnarray*}
it follows that
\begin{eqnarray*}
(\nabla^{j}\bT^{ik})(\nabla_{i}\bT_{jk})
&=&||\nabla_{t}\bT_{t}||^{2}_{t}
+\frac{1}{8}\bigg[-2||{\rm Rm}_{t}||^{2}_{t}
-8R_{ijab}\bT^{ia}\bT^{jb}-4||\bT_{t}||^{4}_{t}\\
&&+ \ 4||\widehat{\bT}_{t}||^{2}_{t}-\left(R_{ijab}+2\bT_{ia}\bT_{jb}\right)
\left(R^{ijmn}+2\bT^{im}\bT^{jn}\right)\psi_{mn}{}^{ab}\bigg]
\end{eqnarray*}
and (\ref{3.25}) can be written as
\begin{eqnarray}
\blacksquare_{t}S_{t}&=&\frac{4}{3}\left|\left|{\rm Sic}_{t}
-2\widehat{\bT}_{t}\right|\right|^{2}_{t}+\frac{8}{3}||\nabla_{t}\bT_{t}
||^{2}_{t}+\frac{4}{3}||\widehat{\bT}_{t}||^{2}_{t}-\frac{2}{3}
||{\rm Rm}_{t}||^{2}_{t}-\frac{13}{3}S^{2}_{t}\nonumber\\
&&- \ \frac{1}{3}\left(R_{ijab}+2\bT_{ia}\bT_{jb}\right)
\left(R^{ijmn}+2\bT^{im}\bT^{jn}\right)\psi_{mn}{}^{ab}.\label{3.26}
\end{eqnarray}
Finally, we deal with the last term $J$ on the right-hand side of
(\ref{3.26}). From the identity $\psi_{ijk\ell}
\psi^{ijk\ell}=168$, we find that
\begin{eqnarray*}
J&:=&-\frac{1}{3}\left(R_{ijab}+2\bT_{ia}\bT_{jb}\right)
\left(R^{ijmn}+2\bT^{im}\bT^{jn}\right)\psi_{mn}{}^{ab}\\
&=&\frac{1}{3}\left(-R_{ij}{}^{ab}R^{ijmn}\psi_{mnab}-4\bT_{i}{}^{a}
\bT_{j}{}^{b}R^{ijmn}\psi_{mnab}
-4\bT^{a}{}_{i}\bT^{im}\bT^{b}{}_{j}\bT^{jn}\psi_{mnab}\right)\\
&=&\frac{1}{3}\bigg[\left|\left|R_{ij}{}^{ab}R^{ijmn}-\frac{1}{2}
\psi^{abmn}\right|\right|^{2}_{t}-\left|\left|R_{ij}{}^{ab}R^{ijmn}
\right|\right|^{2}_{t}
-\frac{168}{4}\\
&&+ \ \left|\left|2\bT_{i}{}^{a}\bT_{j}{}^{b}R^{ijmn}-\psi^{abmn}\right|\right|^{2}_{t}
-4\left|\left|\bT_{i}{}^{a}\bT_{j}{}^{b}R^{ijmn}\right|\right|^{2}_{t}
-168\\
&&+ \ \left|\left|2\widehat{\bT}^{am}\widehat{\bT}^{bn}-\psi^{mnab}
\right|\right|^{2}_{t}-4||\widehat{\bT}_{t}||^{4}_{t}-168\bigg].
\end{eqnarray*}
Plugging the expression for $J$ into (\ref{3.26}), we obtain

\begin{proposition}\label{p3.2} The scalar curvature $R_{t}$ or $S_{t}$ evolves by
\begin{eqnarray}
\blacksquare_{t}S_{t}&=&
\frac{4}{3}\left|\left|{\rm Sic}_{t}-2\widehat{\bT}_{t}\right|\right|^{2}_{t}
+\frac{8}{3}||\nabla_{t}\bT||^{2}_{t}+\frac{1}{3}
\left|\left|R_{ijab}R^{ij}{}_{mn}-\psi_{abmn}
\right|\right|^{2}_{t}+\frac{4}{3}||\widehat{\bT}_{t}||^{2}_{t}\nonumber\\
&&+ \ \frac{1}{3}\left|\left|2\bT_{ia}\bT_{jb}R^{ij}{}_{mn}
-\psi_{abmn}\right|\right|^{2}_{t}
+\frac{1}{3}\left|\left|2\widehat{\bT}_{am}\widehat{\bT}_{bn}
-\psi_{abmn}\right|\right|^{2}_{t}-\frac{4}{3}
||\widehat{\bT}_{t}||^{4}_{t}\label{3.27}\\
&&- \ \frac{2}{3}||{\rm Rm}_{t}||^{2}_{t}
-\frac{13}{3}S^{2}_{t}-\frac{1}{3}\left|\left|R_{ijab}R^{ij}{}_{mn}
\right|\right|^{2}_{t}-\frac{4}{3}\left|\left|\bT_{ia}\bT_{jb}
R^{ij}{}_{mn}\right|\right|^{2}_{t}-126.\nonumber
\end{eqnarray}
\end{proposition}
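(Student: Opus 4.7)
The plan is to derive this evolution equation in three stages: first obtain a preliminary formula $\blacksquare_{t} R_{t} = 2\|{\rm Ric}_{t}\|_{t}^{2} - \tfrac{2}{3}R_{t}^{2} + 4 R_{ijk\ell}\bT^{ik}\bT^{j\ell} + 4(\nabla^{j}\bT^{ik})(\nabla_{i}\bT_{jk})$ from the general evolution formula, then convert everything from ${\rm Ric}_{t}$, $R_{t}$ to ${\rm Sic}_{t}$, $S_{t}$, and finally repackage the remaining cross terms as sums/differences of squares involving $\psi$.

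\textbf{Stage 1.} I start from the standard variation formula $\partial_{t} R_{t} = -\blacktriangle_{t}\mathrm{tr}_{t}\eta_{t} + {\rm div}_{t}({\rm div}_{t}\eta_{t}) - R_{ij}\eta^{ij}$ with $\eta_{ij} = -2S_{ij}$. The trace $\mathrm{tr}_{t}\eta_{t} = \tfrac{8}{3}|\bT_{t}|_{t}^{2}$ is immediate. For the double divergence, the only nontrivial piece is $\nabla^{j}\nabla^{i}\widehat{\bT}_{ij}$, which I simplify using $\nabla^{i}\bT_{ij}=0$ (equation (\ref{3.23})) and the Ricci identity, to get $\bT^{ik}(R_{ijk\ell}\bT^{\ell j} + R_{i\ell}\bT_{k}{}^{\ell}) - (\nabla^{j}\bT^{ik})(\nabla_{i}\bT_{jk})$. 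Collecting everything yields the preliminary form (\ref{3.24}). Then I convert to $S_{t}$ and $\|{\rm Sic}_{t}\|_{t}^{2}$ using $R_{t}=-2|\bT_{t}|_{t}^{2}=\tfrac{3}{2}S_{t}$ and the identity $\mathrm{tr}_{t}\widehat{\bT}_{t} = -2|\bT_{t}|_{t}^{2} = R_{t}$. The algebra just expands $\|{\rm Sic}_{t}\|_{t}^{2} = \|{\rm Ric}_{t} + \tfrac{2}{3}|\bT|^{2}g + 2\widehat{\bT}\|_{t}^{2}$.

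\textbf{Stage 2.} The crucial computation is the rewriting of $(\nabla^{j}\bT^{ik})(\nabla_{i}\bT_{jk})$. I use the Bianchi-type identity (\ref{2.27}) to replace $\nabla^{j}\bT^{ik}$ by $\nabla^{i}\bT^{jk} + (\tfrac{1}{2}R^{ij}{}_{ab} + \bT^{i}{}_{a}\bT^{j}{}_{b})\varphi^{kab}$, and then substitute the Karigiannis expression (\ref{3.7}) for $\nabla_{i}\bT_{jk}$. Four of the six cross terms vanish by swapping dummy indices $i\leftrightarrow j$, $a\leftrightarrow b$, exploiting the antisymmetry of $R_{ijab}$ and $\bT^{i}{}_{a}\bT^{j}{}_{b}$ in $(ij,ab)$ against the symmetry of $\varphi$-contractions. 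The surviving terms involve $\varphi_{mnk}\varphi^{kab}=\delta^{a}_{m}\delta^{b}_{n}-\delta^{b}_{m}\delta^{a}_{n}+\psi_{mn}{}^{ab}$, yielding three groups: a norm-square $\|{\rm Rm}_{t}\|_{t}^{2}$, a cross-term $R_{ijab}\bT^{ia}\bT^{jb}$, a torsion norm $\|\widehat{\bT}_{t}\|_{t}^{2}$, plus a remaining contraction with $\psi$.

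\textbf{Stage 3.} The final task is the quantity
\[
J = -\tfrac{1}{3}\bigl(R_{ijab}+2\bT_{ia}\bT_{jb}\bigr)\bigl(R^{ijmn}+2\bT^{im}\bT^{jn}\bigr)\psi_{mn}{}^{ab}.
\]
Expanding, $J$ consists of three scalar invariants, each a contraction of two symmetric $4$-indexed tensors with $\psi$. For each I use the polarization
\[
-2\langle A,B\rangle = \|A-B\|^{2} - \|A\|^{2} - \|B\|^{2},
\]
with $B$ being $\psi$ (so $\|B\|^{2}$ produces a constant via $\psi_{ijk\ell}\psi^{ijk\ell}=168$ and the relevant symmetrizations). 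This produces the squared differences $\|R_{ijab}R^{ij}{}_{mn}-\psi_{abmn}\|_{t}^{2}$, $\|2\bT_{ia}\bT_{jb}R^{ij}{}_{mn}-\psi_{abmn}\|_{t}^{2}$, $\|2\widehat{\bT}_{am}\widehat{\bT}_{bn}-\psi_{abmn}\|_{t}^{2}$ with the correct coefficients, at the cost of the ``bad'' terms $\|R_{ijab}R^{ij}{}_{mn}\|_{t}^{2}$, $\|\bT_{ia}\bT_{jb}R^{ij}{}_{mn}\|_{t}^{2}$, $\|\widehat{\bT}_{t}\|_{t}^{4}$, and numerical constants summing to $-126$ (from $168/4 + 168 + 168$, rescaled by $1/3$). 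Combining Stage 2 and the $J$-expansion into (\ref{3.26}) produces exactly (\ref{3.27}).

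\textbf{Main obstacle.} The principal difficulty is purely combinatorial: keeping track of the numerical coefficients through the repeated use of the $\varphi\varphi = g\wedge g + \psi$ identity, while verifying that the antisymmetry arguments in Stage 2 really kill four of six cross terms. The symmetry cancellations must be done carefully since $\bT$ is antisymmetric but $\widehat{\bT}$ is symmetric; the contractions with $\varphi$ mix these symmetries, so one has to rely on the precise placement of indices dictated by (\ref{2.27}) and (\ref{3.7}). Once Stages 1 and 2 land correctly, Stage 3 is mechanical.
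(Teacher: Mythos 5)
Your outline reproduces the paper's proof step by step: deriving the preliminary form (\ref{3.24}) from the variation formula for $R$ with $\partial_t g=-2S$, $\nabla^i\bT_{ij}=0$, and the Ricci identity; converting to ${\rm Sic}_t$, $S_t$ via (\ref{3.15})--(\ref{3.17}); rewriting $(\nabla^j\bT^{ik})(\nabla_i\bT_{jk})$ via (\ref{2.27}), (\ref{3.7}), the index-swap cancellations and $\varphi_{ijk}\varphi^k{}_{ab}=g_{ia}g_{jb}-g_{ib}g_{ja}+\psi_{ijab}$; and finally polarizing the three $\psi$-contractions in $J$ with $\psi_{ijk\ell}\psi^{ijk\ell}=168$. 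The ingredients, order of operations, and constant arithmetic ($-126 = -(42+168+168)/3$) all match the paper's, so this is essentially the same proof.
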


Since $S_{t}=\frac{2}{3}R_{t}$, it follows from the above theorem that (\ref{1.6}) holds true.
\\

Before giving local curvature estimates for Laplacian flow in the next
subsection, we derive evolution equations for ${\rm Ric}_{t}$, ${\rm Rm}_{t}$, and $\bT_{t}$ in different forms. Using the Lichnerowicz Laplacian
\begin{equation*}
\blacktriangle_{L,t}\eta_{jk}:=\blacktriangle_{t}
\eta_{jk}-R_{j}{}^{p}\eta_{pk}-R_{k}{}^{p}\eta_{jp}
+2R_{pjkq}h^{qp},
\end{equation*}
we see that the evolution equation for $R_{ij}$ can be written as
\begin{equation*}
\partial_{t}R_{jk}=-\frac{1}{2}
\left[\blacktriangle_{L,t}\eta_{jk}
+\nabla_{j}\nabla_{k}{\rm tr}_{t}\eta_{t}+\nabla_{j}(d^{\ast}_{t}\eta_{t})_{k}
+\nabla_{k}(d^{\ast}_{t}\eta_{t})_{j}\right],
\end{equation*}
where $(d^{\ast}_{t}\eta_{t})_{k}:=-\nabla^{j}\eta_{jk}$. For $\eta_{ij}
=-2R_{ij}-\frac{4}{3}||\bT_{t}||^{2}_{t}g_{ij}
-4\bT_{i}{}^{k}\bT_{kj}$ we have proved ${\rm tr}_{t}\eta_{t}=\frac{8}{3}||\bT_{t}||^{2}_{t}$ and $(d^{\ast}_{t}\eta_{t})_{j}=\nabla_{j}R_{t}+\frac{4}{3}\nabla_{j}
||\bT_{t}||^{2}_{t}
+4\nabla^{i}\widehat{\bT}_{ij}$ with $\widehat{\bT}_{ij}=
\bT_{i}{}^{k}\bT_{kj}$. Then
$$
\partial_{t}R_{jk} \ \ = \ \ \blacktriangle_{L,t}
\left(R_{jk}+\frac{2}{3}||\bT_{t}||^{2}_{t}g_{jk}+2\widehat{
\bT}_{jk}\right)
-\frac{1}{2}\nabla_{j}\left(\nabla_{k}R_{t}+\frac{4}{3}\nabla_{k}
||\bT_{t}||^{2}_{t}+4\nabla^{i}\widehat{\bT}_{ik}\right)
$$
$$
- \ \frac{4}{3}\nabla_{j}\nabla_{k}||\bT_{t}||^{2}_{t}
-\frac{1}{2}\nabla_{k}
\left(\nabla_{j}R_{t}+\frac{4}{3}\nabla_{j}||\bT_{t}||^{2}_{t}
+4\nabla^{i}\widehat{\bT}_{ij}\right)
$$
$$
= \ \ \blacktriangle_{L,t}\left(R_{jk}+\frac{2}{3}||\bT_{t}||^{2}_{t}g_{jk}
+2\widehat{\bT}_{jk}\right)-2\nabla_{j}\nabla^{i}\widehat{\bT}_{ik}
-2\nabla_{k}\nabla^{i}\widehat{\bT}_{ij}-\frac{2}{3}\nabla_{j}\nabla_{k}
||\bT_{t}||^{2}_{t}.
$$
But the first term is equal to
$$
\blacktriangle_{L,t}\left(R_{jk}+\frac{2}{3}||\bT_{t}||^{2}_{t}g_{jk}
+2\widehat{\bT}_{jk}\right) \ \ = \ \ \blacktriangle_{t}R_{jk}-2R_{j}{}^{p}R_{pk}
+2R_{pjkq}R^{pq}
$$
$$+ \ \bigg[\frac{2}{3}\left(\blacktriangle_{t}||\bT_{t}||^{2}_{t}
\right)g_{jk}+2\blacktriangle_{t}\widehat{\bT}_{jk}
-2R_{j}{}^{p}\widehat{\bT}_{pk}-2\widehat{\bT}_{j}{}^{p}R^{p}{}_{k}
+4R_{pjkq}\widehat{\bT}^{pq}\bigg],
$$
we have
\begin{eqnarray}
\blacksquare_{t}R_{ij}&=&-2R_{i}{}^{p}R_{pj}
+2R_{pijq}R^{pq}+\bigg[\frac{2}{3}\left(\blacktriangle_{t}||\bT_{t}||^{2}_{t}
\right)g_{ij}+2\blacktriangle_{t}\widehat{\bT}_{ij}
-2R_{i}{}^{p}\widehat{\bT}_{pj}\nonumber\\
&&- \ 2\widehat{\bT}_{i}{}^{p}R_{pj}
+4R_{pijq}\widehat{T}^{pq}
-2\nabla_{i}\nabla^{p}\widehat{\bT}_{pj}
-2\nabla_{j}\nabla^{p}\widehat{\bT}_{pi}
-\frac{2}{3}\nabla_{i}\nabla_{j}||\bT_{t}||^{2}_{t}\bigg].\label{3.28}
\end{eqnarray}
Consequently, the norm of ${\rm Ric}_{t}$ satisfies
\begin{eqnarray}
\blacksquare_{t}||{\rm Ric}_{t}||^{2}_{t}&=&
-2||\nabla_{t}{\rm Ric}_{t}||^{2}_{t}
+4R_{kij\ell}R^{k\ell}R^{ij}+\bigg[\frac{4}{3}R_{t}\blacktriangle_{t}
||\bT_{t}||^{2}_{t}+8R^{k}{}_{ij}{}^{\ell}\widehat{\bT}_{k\ell}R^{ij}+\nonumber\\
&& \ \frac{8}{3}||{\rm Ric}_{t}||^{2}_{t}||\bT_{t}||^{2}_{t}
+4R^{ij}\blacktriangle_{t}\widehat{\bT}_{ij}
-8 R^{ij}\nabla_{i}\nabla^{k}\widehat{\bT}_{kj}
-\frac{4}{3}R^{ij}\nabla_{i}\nabla_{j}||\bT_{t}||^{2}_{t}\bigg].
\label{3.29}
\end{eqnarray}
The general formula for $R_{ijk}^{\ell}$ gives
\begin{eqnarray}
\partial_{t}R^{\ell}_{ijk}&=&-\nabla_{i}\nabla_{k}R_{j}{}^{\ell}
-\nabla_{j}\nabla^{\ell}R_{ik}+\nabla_{i}\nabla^{\ell}R_{jk}
+\nabla_{j}\nabla_{k}R_{i}{}^{\ell}+R_{ijk}{}^{q}R_{q}{}^{\ell}+R_{ij}{}^{\ell q}R_{kp}\nonumber\\
&&+ \ 2R_{ijk}{}^{q}\widehat{\bT}_{q}{}^{\ell}+2R_{ij}{}^{\ell q}\widehat{\bT}_{kp}
-\frac{2}{3}\left(\nabla_{i}\nabla_{k}||\bT_{t}||^{2}_{t}\right)g_{j}{}^{\ell}
-\frac{2}{3}\left(\nabla_{j}\nabla^{\ell}||\bT_{t}||^{2}_{t}\right)g_{ik}\nonumber\\
&&+ \ \frac{2}{3}\left(\nabla_{i}\nabla^{\ell}||\bT_{t}||^{2}_{t}\right)g_{jk}
+\frac{2}{3}\left(\nabla_{j}\nabla_{k}||\bT_{t}||^{2}_{t}
\right)g_{i}{}^{\ell}
\label{3.30}\\
&&- \ 2\nabla_{i}\nabla_{k}\widehat{\bT}_{j}{}^{\ell}-2\nabla_{j}\nabla^{\ell}\widehat{\bT}_{ik}
+2\nabla_{i}\nabla^{\ell}\widehat{\bT}_{jk}+2\nabla_{j}\nabla_{k}
\widehat{\bT}_{i}{}^{\ell}.\nonumber
\end{eqnarray}
Hence, the evolution equation for $||{\rm Rm}_{t}||^{2}_{t}$ is given by
\begin{eqnarray}
\partial_{t}||{\rm Rm}_{t}||^{2}_{t}&=&
\nabla^{2}_{t}{\rm Ric}_{t}\ast{\rm Rm}_{t}+{\rm Ric}_{t}\ast{\rm Rm}_{t}
\ast{\rm Rm}_{t}+{\rm Rm}_{t}\ast{\rm Rm}_{t}\ast\widehat{\bT}_{t}\nonumber\\
&&+ \ {\rm Ric}_{t}\ast\nabla^{2}_{t}||\bT_{t}||^{2}_{t}+{\rm Rm}_{t}\ast\nabla^{2}_{t}\widehat{\bT}_{t}
+\frac{8}{3}|\bT_{t}|^{2}_{t}||{\rm Rm}_{t}||^{2}_{t}.\label{3.31}
\end{eqnarray}
Moreover, it was proved in \cite{LW2017} that
\begin{eqnarray}
||\nabla_{t}{\rm Rm}_{t}||^{2}_{t}
&\leq&-\frac{1}{2}\blacksquare_{t}||{\rm Rm}_{t}||^{2}_{t}
+C_{1}||{\rm Rm}_{t}||^{3}_{t}\nonumber\\
&&+ \ C_{1}||{\rm Rm}_{t}||^{3/2}_{t}||\nabla^{2}_{t}\bT_{t}||_{t}
+C_{1}||{\rm Rm}_{t}||_{t}||\nabla_{t}\bT_{t}||^{2}_{t}\label{3.32}
\end{eqnarray}
where $C_{1}$ is some universal constant, and
\begin{equation}
\blacksquare_{t}\bT_{t}={\rm Rm}_{t}\ast \bT_{t}
+{\rm Rm}_{t}\ast\bT_{t}\ast\psi_{t}+\nabla_{t}\bT_{t}\ast \bT_{t}
\ast\varphi_{t}+\bT_{t}\ast\bT_{t}\ast\bT_{t}.\label{3.33}
\end{equation}
Squaring (\ref{3.33}) gives
\begin{eqnarray}
||\nabla_{t}\bT_{t}||^{2}_{t}&\leq&-\frac{1}{2}\blacksquare_{t}
||\bT_{t}||^{2}_{t}+C_{2}||{\rm Rm}_{t}||_{t}||\bT_{t}||^{2}_{t}\nonumber\\
&&+ \ C_{2}||\nabla_{t}\bT_{t}||_{t}
||\bT_{t}||^{2}_{t}+C_{2}||\bT_{t}||^{4}_{t}\label{3.34}
\end{eqnarray}
for another universal constant $C_{2}$ which may differs from $C_{1}$.
The Cauchy-Schwartz inequality shows $2C_{2}||\nabla_{t}\bT_{t}||_{t}
||\bT_{t}||^{2}_{t}\leq||\nabla_{t}\bT_{t}||^{2}
+C^{2}_{2}||\bT_{t}||^{4}_{t}$, so that the evolution inequality
(\ref{3.34}) becomes
\begin{equation}
||\nabla_{t}\bT_{t}||^{2}_{t}\leq-\blacksquare_{t}
||\bT_{t}||^{2}_{t}+C_{3}||{\rm Rm}_{t}||_{t}||\bT_{t}||^{2}_{t}
+C_{3}||\bT_{t}||^{4}_{t}.\label{3.35}
\end{equation}
Here $C_{3}$ is a universal constant.

%%%%%%%%%%%%%%%%%%%%%%%%%%%%%%%%%%%%%%%%%%%%%%%%%%%%%%%%%%%%%%%%%%%%%%%%%%%%%%
\subsection{Local curvature estimates}\label{subsection3.3}
%%%%%%%%%%%%%%%%%%%%%%%%%%%%%%%%%%%%%%%%%%%%%%%%%%%%%%%%%%%%%%%%%%%%%%%%%%%%%%

In this section, we consider the Laplacian flow (\ref{3.1})
on $\mathcal{M}\times[0,T]$, where $T\in(0,T_{\max})$. From now on
we always omit the time subscripts from all considered quantities. From (\ref{3.15}), (\ref{3.29}),
(\ref{3.31}), (\ref{3.32}),
and (\ref{3.35}) we have
\begin{eqnarray*}
||\nabla{\rm Ric}||^{2}&=&-\frac{1}{2}\blacksquare
||{\rm Ric}||^{2}+{\rm Ric}\ast{\rm Ric}\ast{\rm Rm}
-\frac{1}{3}\left(\blacktriangle R\right)R
-\frac{2}{3}||{\rm Ric}||^{2}R\\
&&+ \ 2\langle\langle{\rm Ric}, \blacktriangle\widehat{\bT}
\rangle\rangle+\frac{1}{3}\langle\langle{\rm Ric}, \nabla^{2}R\rangle\rangle
+{\rm Ric}\ast\widehat{\bT}\ast{\rm Rm}+{\rm Ric}\ast\nabla^{2}
\widehat{\bT},\\
||\nabla{\rm Rm}||^{2}&\leq&-\frac{1}{2}\blacksquare||{\rm Rm}||^{2}
+C||{\rm Rm}||^{3}+C||{\rm Rm}||^{3/2}
||\nabla^{2}\bT||+C||{\rm Rm}||||\nabla \bT||^{2},\\
\partial_{t}||{\rm Rm}||^{2}&=&\nabla^{2}{\rm Ric}\ast{\rm Rm}
+{\rm Ric}\ast{\rm Rm}\ast{\rm Rm}+{\rm Rm}\ast{\rm Rm}\ast\widehat{\bT}\\
&&+ \ {\rm Ric}\ast\nabla^{2}||\bT||^{2}+{\rm Rm}\ast\nabla^{2}
\widehat{\bT}+\frac{4}{3}||\bT||^{2}||{\rm Rm}||^{2},\\
||\nabla \bT||^{2}&\leq&-\blacksquare||\bT||^{2}
+C||{\rm Rm}||||\bT||^{2}+C||\bT||^{4},\\
\partial_{t}dV&=&\frac{2}{3}||\bT||^{2}dV, \ \ \ R \ \ = \ \
-||\bT||^{2}.
\end{eqnarray*}
Choose an open domain $\Omega$ of $\mathcal{M}$ and assume that
\begin{equation}
||{\rm Ric}||\leq K\label{3.36}
\end{equation}
on $\Omega\times[0,T]$,  Then the torsion $\bT$ satisfies $||\bT||\lesssim
K^{1/2}$ and metrics $g_{t}$ are all equivalent
to $g_{0}$. We also observe from (\ref{2.25}) and (\ref{3.19}) that
\begin{equation}
||{\rm Ric}||\lesssim1\Longleftrightarrow
|\Delta\varphi|\lesssim1\label{3.37}
\end{equation}
and the following simple fact
\begin{equation}
\partial_{t}||A||^{2}=\frac{p}{2}||A||^{p-2}\partial_{t}||A||^{2}
\label{3.39}
\end{equation}
for any tensor $A$.
\\

Choose a Lipschitz function $\eta$ with support in $\Omega$ and
consider the quantity
\begin{equation*}
\frac{d}{dt}\int||{\rm Rm}||^{p}\eta^{2p}dV, \ \ \
\int:=\int_{\mathcal{M}},
\end{equation*}
where $p\geq5$. As in \cite{LY2018}, we introduce the following ``good'' quantities
\begin{eqnarray*}
A_{1}&:=&\int||{\rm Rm}||^{p}\eta^{2p}dV, \ \ \ A_{2} \ \ := \ \
\int ||{\rm Rm}||^{p-1}\eta^{2p}dV,\\
A_{3}&:=&\int||{\rm Rm}||^{p-1}||\nabla
\eta||^{2}\eta^{2p-1}dV, \ \ \ A_{4} \ \ := \ \ \int||{\rm Rm}||^{p-1}
||\nabla\eta||^{2}\eta^{2p-2}dV
\end{eqnarray*}
and also ``bad'' quantities
\begin{equation*}
B_{1}:=\frac{1}{K}\int||\nabla{\rm Ric}||^{2}||{\rm Rm}||^{p-1}
\eta^{2p}dV, \ \ \ B_{2}:=
\int||\nabla{\rm Rm}||^{2}||{\rm Rm}||^{p-3}\eta^{2p}dV.
\end{equation*}
We split the proof of Theorem \ref{t1.4} into four steps.

\begin{itemize}

\item[{\bf (a)}] In the first step, we can show that, see Lemma \ref{l3.3},
\begin{eqnarray*}
\frac{d}{dt}A_{1}&\leq& B_{1}+c K B_{2}+cK A_{4}
+cK A_{1}+cK^{2}A_{2}\\
&&+ \ c\int\left(-\blacksquare||\bT||^{2}
\right)||{\rm Rm}||^{p-1}\eta^{2p}dV.
\end{eqnarray*}

\item[{\bf (b)}] In the second step, we can prove that the term
\begin{equation*}
c\int\left(-\blacksquare||\bT||^{2}
\right)||{\rm Rm}||^{p-1}\eta^{2p}dV
\end{equation*}
is bounded from above by (see (\ref{3.47}))
\begin{equation*}
B_{1}+cK B_{2}+cK^{2}A_{2}+cK A_{1}
-\frac{d}{dt}\left[\int c(-R)||{\rm Rm}||^{p-1}
\eta^{2p}dV\right].
\end{equation*}
Observe that the above integral is nonnegative, since the scalar
curvature $R$ is nonpositive along the Laplacian flow on closed
$G_{2}$-structures. Hence we obtain from the first step that, see
Lemma \ref{l3.4},
\begin{eqnarray*}
\frac{d}{dt}A_{1}&\leq&2B_{1}+cK B_{2}+cK A_{4}+cK A_{1}
+cK^{2}A_{2}\\
&&- \ \frac{d}{dt}\left[\int c(-R)||{\rm Rm}||^{p-1}
\eta^{2p}dV\right].
\end{eqnarray*}

\item[{\bf (c)}] In the next two steps, we estimate the bad terms $B_{1}$
and $B_{2}$. In the third step, $B_{1}$ is estimated by (see
(\ref{3.57}))
\begin{eqnarray*}
B_{1}&\leq& cK B_{2}+cK A_{4}+cK A_{1}
+cK^{2}A_{2}\\
&&- \
\frac{d}{dt}\left[\frac{1}{K}\int||{\rm Rm}||^{p-1}
||{\rm Ric}||^{2}\eta^{2p}dV+c\int(-R)||{\rm Rm}||^{p-1}
\eta^{2p}dV\right].
\end{eqnarray*}
Then the second step can be simplified as, see Lemma \ref{l3.5},
\begin{eqnarray*}
\frac{d}{dt}A_{1}&\leq& cK B_{2}
+cK A_{4}+cK A_{1}+cK^{2}A_{2}\\
&&- \
\frac{d}{dt}\left[\frac{1}{K}\int||{\rm Rm}||^{p-1}
||{\rm Ric}||^{2}\eta^{2p}dV+c\int(-R)||{\rm Rm}||^{p-1}
\eta^{2p}dV\right].
\end{eqnarray*}

\item[{\bf (d)}] Finally, we estimate the term $B_{2}$. In this step
we shall use the assumption that $p\geq5$. Using the inequality $||
\nabla\bT||\lesssim ||{\rm Rm}||$ and $||\nabla^{2}\bT||
\lesssim ||\nabla{\rm Rm}||+||{\rm Rm}||||\bT||
+||\nabla\bT|||\bT||+||\bT||^{3}$, we can prove (see (\ref{3.67}))
\begin{equation*}
B_{2}\leq cA_{4}+cA_{1}-
\frac{d}{dt}\left[\frac{1}{p-1}
\int||{\rm Rm}||^{p-1}\eta^{2p}dV\right].
\end{equation*}
Plugging it into the third step, we arrive at, see Lemma \ref{l3.6},
\begin{eqnarray*}
\frac{d}{dt}(A_{1}+cK A_{2})&\le&
cK(A_{1}+cK A_{2})+cK A_{4}\\
&&- \ \frac{d}{dt}\bigg[\frac{c}{K}\int||{\rm Rm}||^{p-1}
||{\rm Ric}||^{2}\eta^{2p}dV\\
&&+ \ c\int(-R)||{\rm Rm}||^{p-1}\eta^{2p}dV\bigg].
\end{eqnarray*}

\end{itemize}
If we choose a geodesic ball $\Omega:=B_{g_{0}}(
x_{0},\rho/\sqrt{K})$ and a cut-off function $\eta$ so that $
||\nabla\phi||\leq\sqrt{K}e^{cKT}/\rho$, then the above inequality
gives a proof of Theorem \ref{t1.4}.
\\

We are going to carry out the above mentioned four steps. From (\ref{3.39}) and the above evolution equations, we have
$$
\frac{d}{dt}\int||{\rm Rm}||^{p}\eta^{2p}
dV \ \ = \ \ \int
\left(\partial_{t}||{\rm Rm}||^{p}\right)
\eta^{2p}dV+\int||{\rm Rm}||^{p}\eta^{2p}
\partial_{t}dV
$$
$$
= \ \ \int
\frac{p}{2}||{\rm Rm}||^{p-2}
\left(\partial_{t}||{\rm Rm}||^{2}\right)
\eta^{2p}dV+\int
||{\rm Rm}||^{p}\eta^{2p}
\left(-\frac{2}{3}R\right)dV
$$
$$
= \ \ \int\frac{p}{2}
||{\rm Rm}||^{p-2}
\left[\begin{array}{cc}
\nabla^{2}{\rm Ric}\ast{\rm Rm}
+{\rm Ric}\ast{\rm Rm}\ast{\rm Rm}\\
+ \ {\rm Rm}
\ast{\rm Rm}\ast\widehat{\bT}+{\rm Ric}\ast\nabla^{2}||\bT||^{2}\\
+ \ {\rm Rm}\ast\nabla^{2}\widehat{\bT}
+\frac{4}{3}||\bT||^{2}||{\rm Rm}||^{2}
\end{array}\right]\eta^{2p}dV
$$
\begin{equation}
- \ \ \frac{2}{3}\int R||{\rm Rm}||^{p}
\eta^{2p}dV\label{3.39}
\end{equation}
$$
\leq \ \ c\int
||{\rm Rm}||^{p-2}\bigg[\nabla^{2}{\rm Ric}\ast{\rm Rm}
+K||{\rm Rm}||^{2}+K||{\rm Rm}||^{2}
+\nabla^{2}||\bT||^{2}\ast{\rm Ric}
$$
$$
+ \ \nabla^{2}\widehat{\bT}\ast{\rm Rm}\bigg]
\eta^{2p}dV+cK\int||{\rm Rm}||^{p}\eta^{2p}dV
$$
$$
\leq \ \ c\int
||{\rm Rm}||^{p-2}
\left[\nabla^{2}{\rm Ric}\ast{\rm Rm}
+\nabla^{2}||\bT||^{2}\ast{\rm Ric}
+\nabla^{2}\widehat{\bT}\ast{\rm Rm}\right]\eta^{2p}dV
$$
$$
+ \ cK\int||{\rm Rm}||^{p}\eta^{2p}dV.
$$
It was proved in \cite{KMW2016} that the first integral in (\ref{3.39}) is bounded by
$$
c\int||{\rm Rm}||^{p-2}
\left(\nabla^{2}{\rm Ric}\ast{\rm Rm}\right)
\eta^{2p}dV \ \ \leq \ \
\frac{1}{K}\int||\nabla{\rm Ric}||^{2}
||{\rm Rm}||^{p-1}\eta^{2p}dV
$$
\begin{equation}
+ \ cK\int
||\nabla{\rm Rm}||^{2}||{\rm Rm}||^{p-3}
\eta^{2p}dV
+cK\int||{\rm Rm}||^{p-1}||\nabla
\eta||^{2}\eta^{2p-2}dV.\label{3.40}
\end{equation}
Since $||\bT||^{2}=-R$, the same inequality holds for the integral
\begin{equation*}
c\int||{\rm Rm}||^{p-2}
\left(\nabla^{2}||\bT||^{2}\ast{\rm Ric}\right)\eta^{2p}
dV.
\end{equation*}
To deal with the last term in the bracket of (\ref{3.39}), we use the same argument of \cite{KMW2016} to conclude
$$
c\int||{\rm Rm}||^{p-2}
\left(\nabla^{2}\widehat{\bT}\ast{\rm Rm}\right)\eta^{2p}dV \ \
= \ \ c\int
\left(\nabla||{\rm Rm}||^{p-2}\ast\nabla
\widehat{\bT}\ast{\rm Rm}\right)\eta^{2p}dV
$$
$$
+ \ c\int
\left(||{\rm Rm}||^{p-2}\ast\nabla\widehat{\bT}
\ast\nabla{\rm Rm}\right)\eta^{2p}dV
$$
$$
+ \ c\int\left(||{\rm Rm}||^{p-2}\ast
\nabla\widehat{\bT}\ast{\rm Rm}\ast\nabla\eta\right)\eta^{2p-1}dV
$$
$$
\leq \ \ c\int ||{\rm Rm}||^{p-2}||\nabla{\rm Rm}||
||\nabla\widehat{\bT}||\eta^{2p}dV
+c\int||{\rm Rm}||^{p-2}||\nabla\widehat{\bT}||
||\nabla{\rm Rm}||\eta^{2p}dV
$$
$$
+ \ c\int ||{\rm Rm}||^{p-1}||\nabla\widehat{\bT}||||\nabla
\eta||\eta^{2p-1}dV
$$
$$
\leq \ \ c\int||{\rm Rm}||^{p-2}||\nabla{\rm Rm}||
||\nabla\widehat{\bT}||\eta^{2p}dV
+c\int ||{\rm Rm}||^{p-1}
||\nabla\widehat{\bT}||||\nabla\eta||\eta^{2p-1}
dV.
$$
According to the Cauchy-Schwartz inequality, the first and second
integrals are bounded by
$$
\int||{\rm Rm}||^{p-2}
||\nabla{\rm Rm}||||\nabla\widehat{\bT}||\eta^{2p}dV
$$
$$
= \ \ \int\left(||{\rm Rm}||^{\frac{p-3}{2}}||\nabla{\rm Rm}||
\eta^{p}\right)
\left(||{\rm Rm}||^{\frac{p-1}{2}}||\nabla\widehat{\bT}||
\eta^{p}\right)dV
$$
$$
\leq \ \ cK\int||\nabla{\rm Rm}||^{2}
||{\rm Rm}||^{p-3}\eta^{2p}dV
+\frac{1}{K}\int||\nabla\widehat{\bT}||^{2}
||{\rm Rm}||^{p-1}\eta^{2p}dV
$$
and
$$
\int||{\rm Rm}||^{p-1}||\nabla\widehat{\bT}||
||\nabla\eta||\eta^{2p-1}dV
$$
$$
= \ \ \int\left(||{\rm Rm}^{\frac{p-1}{2}}
||\nabla\widehat{\bT}||\eta^{p}\right)
\left(||{\rm Rm}||^{\frac{p-1}{2}}
||\nabla\eta||\eta^{p-1}\right)dV
$$
$$
\leq \ \ \frac{1}{K}\int||\nabla\widehat{\bT}||^{2}
||{\rm Rm}||^{p-1}\eta^{2p}dV
+cK\int||{\rm Rm}||^{p-1}||\nabla\eta||^{2}
\eta^{2p-2}dV.
$$
Hence we obtain
$$
c\int||{\rm Rm}||^{p-2}
\left(\nabla^{2}\widehat{\bT}
\ast{\rm Rm}\right)\eta^{2p}dV \ \
\leq \ \ \frac{1}{K}\int||\nabla\widehat{\bT}||^{2}
||{\rm Rm}||^{p-1}\eta^{2p}dV
$$
\begin{equation}
+ \ cK\int||\nabla{\rm Rm}||^{2}||{\rm Rm}||^{p-3}
\eta^{2p}dV+cK\int||{\rm Rm}||^{p-1}
||\nabla\eta||^{2}\eta^{2p-2}dV.\label{3.41}
\end{equation}
Using $\widehat{\bT}=\bT\ast\bT$ and $R=-||\bT||^{2}$ yields
$$
\frac{1}{K}\int||\nabla\widehat{\bT}||^{2}||{\rm Rm}||^{p-1}
\eta^{2p}dV
$$
\begin{equation}
\leq \ \ \frac{c}{K}\int ||\nabla\bT||^{2}
||\bT||^{2}||{\rm Rm}||^{p-1}\eta^{2p}dV \ \ \leq \ \
c\int||\nabla\bT||^{2}||{\rm Rm}||^{p-1}
\eta^{2p}dV\label{3.42}
\end{equation}
$$
\leq \ \ c\int\left(-\frac{1}{4}\blacksquare
||\bT||^{2}+c||{\rm Rm}||||\bT||^{2}+c||\bT||^{4}\right)
||{\rm Rm}||^{p-1}\eta^{2p}dV
$$
$$
= \ \ c\int\left(-\blacksquare||\bT||^{2}\right)
||{\rm Rm}||^{p-1}\eta^{2p}dV
$$
$$
+ \ cK\int||{\rm Rm}||^{p}\eta^{2p}dV
+cK^{2}\int||{\rm Rm}||^{p-1}\eta^{2p}dV.
$$

Hence, using (\ref{3.40}), (\ref{3.41}), and (\ref{3.42}), we arrive at

\begin{lemma}\label{l3.3} One has
\begin{eqnarray}
A'_{1} \ \equiv \ \frac{d}{dt}A_{1}&\leq&B_{1}+cKB_{2}
+cKA_{4}+cKA_{1}+cK^{2}A_{2}\nonumber\\
&&+ \ c\int\left(-\blacksquare||\bT||^{2}\right)
||{\rm Rm}||^{p-1}\eta^{2p}dV.\label{3.43}
\end{eqnarray}
\end{lemma}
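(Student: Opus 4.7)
The plan is to differentiate $A_1$ under the integral sign and then systematically treat the resulting curvature, torsion, and derivative terms using the evolution equations listed at the start of Section~\ref{subsection3.3}, integration by parts, and Cauchy--Schwarz. Starting from
\begin{equation*}
\frac{d}{dt}A_{1}=\int\frac{p}{2}||{\rm Rm}||^{p-2}\left(\partial_{t}||{\rm Rm}||^{2}\right)\eta^{2p}dV
+\int ||{\rm Rm}||^{p}\eta^{2p}\left(-\tfrac{2}{3}R\right)dV,
\end{equation*}
I substitute the schematic form of $\partial_{t}||{\rm Rm}||^{2}$, absorb $||\bT||^{2}=-R\lesssim K$ into the constant $cK$, and bound the ``easy'' zeroth-order piece by $cK A_{1}$. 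What remains is a sum of three integrals involving the second derivatives $\nabla^{2}{\rm Ric}\ast{\rm Rm}$, $\nabla^{2}||\bT||^{2}\ast{\rm Ric}$, and $\nabla^{2}\widehat{\bT}\ast{\rm Rm}$, each of which must be estimated separately.

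For the first of these, I would invoke the integration-by-parts estimate from \cite{KMW2016}, which produces the bound
\begin{equation*}
c\int||{\rm Rm}||^{p-2}(\nabla^{2}{\rm Ric}\ast{\rm Rm})\eta^{2p}dV
\le B_{1}+cK B_{2}+cK A_{4}.
\end{equation*}
Since $||\bT||^{2}=-R$ is (up to a constant) a trace of ${\rm Ric}$, the integral involving $\nabla^{2}||\bT||^{2}\ast{\rm Ric}$ is structurally identical to the first and thus satisfies the same bound; no new ideas are needed here. The main obstacle is the third integral, which contains $\nabla^{2}\widehat{\bT}\ast{\rm Rm}$. I would integrate by parts once to move a derivative off $\nabla\widehat{\bT}$, obtaining terms of the form $||{\rm Rm}||^{p-2}||\nabla{\rm Rm}||\,||\nabla\widehat{\bT}||\,\eta^{2p}$ and $||{\rm Rm}||^{p-1}||\nabla\widehat{\bT}||\,||\nabla\eta||\,\eta^{2p-1}$, then split each via Cauchy--Schwarz with carefully chosen weights ($K$ against $1/K$) so that the $||\nabla{\rm Rm}||^{2}$ piece is absorbed into $cK B_{2}$ and the $||\nabla\eta||^{2}$ piece into $cK A_{4}$, leaving the residual
\begin{equation*}
\tfrac{1}{K}\int ||\nabla\widehat{\bT}||^{2}||{\rm Rm}||^{p-1}\eta^{2p}dV.
\end{equation*}

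To finish, I would use $\widehat{\bT}=\bT\ast\bT$ together with $||\bT||^{2}\lesssim K$ to get $||\nabla\widehat{\bT}||^{2}\le c\, K\,||\nabla\bT||^{2}$, which converts the residual into $c\int||\nabla\bT||^{2}||{\rm Rm}||^{p-1}\eta^{2p}dV$. Here is the only point where a torsion evolution enters: the differential inequality (\ref{3.35}) lets me replace $||\nabla\bT||^{2}$ by $-\blacksquare||\bT||^{2}+c||{\rm Rm}||\,||\bT||^{2}+c||\bT||^{4}$, and the last two contributions are absorbed into $cK A_{1}$ and $cK^{2}A_{2}$ respectively using $||\bT||^{2}\lesssim K$. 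Combining the three estimates yields precisely (\ref{3.43}). The only subtlety worth flagging is making sure the Cauchy--Schwarz splitting in the third integral produces the correct weight of $K$ in front of $B_{2}$ and $A_{4}$; everything else is bookkeeping.
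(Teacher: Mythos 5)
Your proposal follows the paper's own argument for Lemma~\ref{l3.3} essentially line by line: differentiating $A_1$ and substituting (\ref{3.31}) and $\partial_t dV = -\frac{2}{3}R\,dV$; absorbing the zeroth-order terms into $cKA_1$ via $\|\bT\|^2 = -R \lesssim K$; invoking (\ref{3.40}) for the $\nabla^2{\rm Ric}\ast{\rm Rm}$ term and noting the $\nabla^2\|\bT\|^2\ast{\rm Ric}$ term is handled identically since $\|\bT\|^2=-R$; integrating by parts once on $\nabla^2\widehat{\bT}\ast{\rm Rm}$ and splitting by Cauchy--Schwarz with weights $K$ and $1/K$ to reach the residual $\frac{1}{K}\int\|\nabla\widehat{\bT}\|^2\|{\rm Rm}\|^{p-1}\eta^{2p}dV$; and finally using $\widehat{\bT}=\bT\ast\bT$, $\|\bT\|^2\lesssim K$, and (\ref{3.35}) to convert that residual into the $-\blacksquare\|\bT\|^2$ integral plus $cKA_1+cK^2A_2$. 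This is the same decomposition and the same sequence of estimates as in the paper.
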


In the following computations, we are mainly going to estimate or
simplify the bad terms $B_{1}, B_{2}$, and also the term involving $
-\blacksquare||\bT||^{2}$. Integration by parts on the last integral in (\ref{3.43}) and using
$R=-||\bT||^{2}$, we obtain
$$
c\int\left(-\blacksquare||\bT||^{2}
\right)||{\rm Rm}||^{p-1}\eta^{2p}dV \ \ = \ \
c\int\left((\partial_{t}-\Delta)R\right)||{\rm Rm}||^{p-1}
\eta^{2p}dV
$$
$$
= \ \ c\int\left(\partial_{t}R\right)
||{\rm Rm}||^{p-1}\eta^{2p}dV
+c\int\left\langle\nabla R,\nabla\left(||{\rm Rm}||^{p-1}
\eta^{2p}\right)\right\rangle dV
$$
$$
= \ \ \frac{d}{dt}\left(c\int R||{\rm Rm}||^{p-1}
\eta^{2p}dV\right)
-c\int R\left(\partial_{t}||{\rm Rm}||^{p-1}
\right)\eta^{2p}dV
$$
$$
- \ c\int R||{\rm Rm}||^{p-1}\eta^{2p}\partial_{t}dV
+c\int\left\langle\nabla R, ||{\rm Rm}||^{p-3}
{\rm Rm}\ast\nabla{\rm Rm}\right\rangle\eta^{2p}dV
$$
$$
+ \ c\int\left\langle\nabla R, ||{\rm Rm}||^{p-1}
\eta^{2p-1}\nabla\eta\right\rangle dV
$$
$$
\leq \ \ c\int||{\rm Rm}||^{p-2}
\langle\nabla R,\nabla{\rm Rm}\rangle\eta^{2p}dV
+c\int||{\rm Rm}||^{p-1}||\nabla R||||\nabla\eta||\eta^{2p-1}dV
$$
$$
+ \ c\int R^{2}||{\rm Rm}||^{p-1}\eta^{2p}dV
-c\int R\left(\partial_{t}||{\rm Rm}||^{p-1}\right)\eta^{2p}dV
$$
$$
+ \ \frac{d}{dt}\left(c\int R||{\rm Rm}||^{p-1}\eta^{2p}dV
\right).
$$
The first two integrals can be simplified by using the Cauchy-Schwarz inequality as follows:
$$
c\int||{\rm Rm}||^{p-2}\langle\nabla R,\nabla{\rm Rm}\rangle
\eta^{2p}dV \ \ \leq \ \ c\int||\nabla{\rm Ric}||||\nabla{\rm Rm}||
||{\rm Rm}||^{p-2}\eta^{2p}dV
$$
$$
\leq \ \ c\int\left(||\nabla{\rm Rm}||
||{\rm Rm}||^{\frac{p-3}{2}}\eta^{p}
\right)\left(||\nabla{\rm Ric}||||{\rm Rm}||^{\frac{p-1}{2}}
\eta^{p}\right)dV \ \ \leq \ \ \frac{1}{50}B_{1}+cK B_{2}
$$
and
$$
c\int||{\rm Rm}||^{p-1}
||\nabla R||||\nabla\eta||\eta^{2p-1}dV \ \
\leq \ \ c\int||{\rm Rm}||^{p-1}||\nabla{\rm Ric}||
||\nabla\eta||\eta^{2p-1}dV
$$
$$
\leq \ \ c\int\left(||{\rm Rm}||^{\frac{p-1}{2}}
||\nabla\eta||\eta^{p-1}\right)
\left(||{\rm Rm}||^{\frac{p-1}{2}}||\nabla{\rm Ric}||
\eta^{p}\right)dV \ \ \leq \ \ \frac{1}{50}B_{1}+cK A_{4}.
$$
Therefore
$$
c\int\left(-\blacksquare||\bT||^{2}
\right)||{\rm Rm}||^{p-1}\eta^{2p}dV \ \ \leq \ \ \frac{2}{50}B_{1}+cK B_{2}+cK A_{4}+cK^{2}A_{2}
$$
\begin{equation}
+ \ \frac{d}{dt}\left(c\int R||{\rm Rm}||^{p-1}\eta^{2p}dV
\right)-c\int R\left(\partial_{t}||{\rm Rm}||^{p-1}\right)
\eta^{2p}dV.\label{3.44}
\end{equation}
Now, the second integral in (\ref{3.44}) is equal to
$$
-c\int R\left(\partial_{t}||{\rm Rm}||^{p-1}\right)
\eta^{2p}dV \ \ = \ \ c\int\left(-R\right)
||{\rm Rm}||^{p-3}\left(\partial_{t}||{\rm Rm}||^{2}\right)
\eta^{2p}dV
$$
$$
= \ \ c\int(-R)||{\rm Rm}||^{p-3}
\bigg[\nabla^{2}{\rm Ric}\ast{\rm Rm}
+{\rm Ric}\ast{\rm Rm}\ast{\rm Rm}
+{\rm Rm}\ast{\rm Rm}\ast\widehat{\bT}
$$
$$
+ \ {\rm Ric}\ast\nabla^{2}||\bT||^{2}
+{\rm Rm}\ast\nabla^{2}\widehat{\bT}
+\frac{4}{3}||\bT||^{2}||{\rm Rm}||^{2}\bigg]\eta^{2p}dV
$$
$$
\leq \ \ c\int(-R)||{\rm Rm}||^{p-3}
\left[\nabla^{2}{\rm Ric}\ast{\rm Rm}
-{\rm Ric}\ast\nabla^{2}R+\nabla^{2}\widehat{\bT}\ast{\rm Rm}
\right]\eta^{2p}dV
+cK^{2}A_{2}.
$$
Using the identity, where $p\geq5$,
\begin{equation*}
\nabla||{\rm Rm}||^{p-3}
=\frac{p-3}{2}
\left(||{\rm Rm}||^{2}\right)^{\frac{p-3}{2}-1}
\nabla||{\rm Rm}||^{2}=||{\rm Rm}||^{p-5}
{\rm Rm}\ast\nabla{\rm Rm}
\end{equation*}
we obtain
$$
c\int(-R)||{\rm Rm}||^{p-3}\eta^{2p}(\nabla^{2}{\rm Ric}
\ast{\rm Rm})dV = c\int(-R)||{\rm Rm}||^{p-3}\eta^{2p}
(\nabla{\rm Ric}\ast\nabla{\rm Rm})dV
$$
$$
+ \ c\int\left\{\nabla\left[(-R)||{\rm Rm}||^{p-3}\phi^{2p}\right]
\ast\nabla{\rm Ric}\ast{\rm Rm}\right\}dV
$$
$$
= c\int(-R)||{\rm Rm}||^{p-3}
\eta^{2p}(\nabla{\rm Ric}\ast\nabla{\rm Rm})
dV+ c\int||{\rm Rm}||^{p-3}\eta^{2p}(\nabla R\ast\nabla{\rm Ric}
\ast{\rm Rm})dV
$$
$$
+ \ c\int(-R)\eta^{2p}\left(\nabla||{\rm Rm}||^{p-3}
\ast\nabla{\rm Ric}\ast{\rm Rm}\right)dV
$$
$$
+ \ c\int(-R)||{\rm Rm}||^{p-3}
\eta^{2p-1}\left(\nabla\phi\ast\nabla{\rm Ric}\ast{\rm Rm}
\right)dV
$$
$$
\leq \ \ c\int||{\rm Rm}||^{p-2}\eta^{2p}
||\nabla{\rm Ric}||||\nabla{\rm Rm}||dV
+c\int||\nabla{\rm Ric}||||\nabla R||||{\rm Rm}||^{p-2}\eta^{2p}dV
$$
$$
+ \ c\int||{\rm Rm}||^{p-2}||\nabla{\rm Ric}||||\nabla{\rm Rm}||
\eta^{2p}dV
+c\int||{\rm Rm}||^{p-1}\eta^{2p-1}
||\nabla\eta||||\nabla{\rm Ric}||dV
$$
$$
\leq \ \ c\int\left(||\nabla{\rm Ric}||
||{\rm Rm}||^{\frac{p-1}{2}}\eta^{p}\right)
\left(||\nabla{\rm Rm}||||{\rm Rm}||^{\frac{p-3}{2}}
\eta^{p}\right)dV
$$
$$
+ \ c\int\left(||\nabla{\rm Ric}||||{\rm Rm}||^{\frac{p-1}{2}}
\eta^{p}\right)
\left(||\nabla\phi||||{\rm Rm}||^{\frac{p-1}{2}}
\eta^{p-1}\right)dV \ \ \leq \ \ \frac{1}{50}B_{1}+cK B_{2}
+cK A_{4}.
$$
Similarly, we can prove
$$
c\int(-R)||{\rm Rm}||^{p-3}
\left(-{\rm Ric}\ast\nabla^{2} R\right)
\eta^{2p}dV
\leq\frac{1}{50}B_{1}+cK B_{2}+cKA_{4}.
$$
Using $\nabla\widehat{\bT}=\nabla \bT\ast \bT\leq c||\nabla \bT||||
\bT||
\leq c K^{1/2}||\nabla \bT||$ yields
$$
c\int(-R)||{\rm Rm}||^{p-3}\eta^{2p}
\left(\nabla^{2}\widehat{\bT}\ast{\rm Rm}\right)
dV \ \ = \ \ c\int(-R)||{\rm Rm}||^{p-3}
\eta^{2p}(\nabla\widehat{\bT}\ast\nabla{\rm Rm})dV
$$
$$
+ \ c\int\left\{\nabla
\left[(-R)||{\rm Rm}||^{p-3}\eta^{2p}\right]
\ast\nabla\widehat{\bT}\ast{\rm Rm}\right\}dV
$$
$$
= \ \ c\int(-R)||{\rm Rm}||^{p-3}\eta^{2p}
(\nabla\widehat{T}\ast\nabla{\rm Rm})
dV+c\int||{\rm Rm}||^{p-3}\eta^{2p}
(\nabla R\ast\nabla\widehat{\bT}\ast{\rm Rm})dV
$$
$$
+ \ c\int(-R)\eta^{2p}
\left(\nabla||{\rm Rm}||^{p-3}\ast\nabla\widehat{\bT}
\ast{\rm Rm}\right)dV
$$
$$
+ \ c\int(-R)||{\rm Rm}||^{p-3}\eta^{2p-1}
\left(\nabla\eta\ast\nabla\widehat{\bT}
\ast{\rm Rm}\right)dV
$$
$$
\leq \ \ c\int\left(||{\rm Rm}||^{p-2}
\eta^{2p}||\nabla{\rm Rm}||+||{\rm Rm}||^{p-1}
\eta^{2p-1}||\nabla\eta||\right)\left(K^{1/2}||\nabla \bT||\right)
dV
$$
$$
\leq \ \ c\int\left(||\nabla{\rm Rm}||
||{\rm Rm}||^{\frac{p-3}{2}}\eta\right)
\left(||\nabla \bT|| K^{1/2}||{\rm Rm}||^{\frac{p-1}{2}}
\eta^{p}\right)dV
$$
$$
+ \ \int\left(||\nabla\eta||||{\rm Rm}||^{\frac{p-1}{2}}
\eta^{p-1}\right)\left(||\nabla \bT|| K^{1/2}
||{\rm Rm}||^{\frac{p-1}{2}}\eta^{p}\right)dV
$$
$$
\leq \ \ \epsilon c\int||\nabla \bT||^{2}||{\rm Rm}||^{p-1}
\eta^{2p}dV
+\frac{cK}{\epsilon}B_{2}+\frac{cK}{\epsilon}A_{4}.
$$
According to (\ref{3.44}) we get
$$
c\int||\nabla \bT||^{2}||{\rm Rm}||^{p-1}\eta^{2p}dV
$$
$$
\leq \ \ c\int\left(-\blacksquare||\bT||^{2}\right)
||{\rm Rm}||^{p-1}\eta^{2p}dV
+cKA_{1}+cK^{2}A_{2}
$$
$$
\leq \ \ \frac{2}{50}B_{1}+cK B_{2}+cK A_{4}+cK^{2}A_{2}
+cKA_{1}
$$
$$
+ \ \frac{d}{dt}\left(c\int R||{\rm Rm}||^{p-1}
\eta^{2p}dV\right)
-c\int R\left(\partial_{t}||{\rm Rm}||^{p-1}\right)
\eta^{2p}dV
$$
$$
\leq \ \ \frac{2}{50}B_{1}+cK B_{2}
+cK A_{4}+cK^{2}A_{2}+cK A_{1}
$$
$$
+ \ \frac{d}{dt}\left(\int cR||{\rm Rm}||^{p-1}\eta^{2p}
dV\right)
+c\int(-R)||{\rm Rm}||^{p-3}\left(\partial_{t}
||{\rm Rm}||^{2}\right)\eta^{2p}dV.
$$
Hence
$$
c\int(-R)||{\rm Rm}||^{p-3}\left(\partial_{t}
||{\rm Rm}||^{2}\right)\eta^{2p}dV \ \
\leq \ \ \frac{2}{50}B_{1}+cK B_{2}
+cK A_{4}+\frac{cK}{\epsilon}B_{2}+\frac{cK}{\epsilon}A_{4}
$$
$$
+ \ \epsilon
\left[\frac{2}{50}B_{1}
+cK B_{2}+cK A_{4}+cK^{2}A_{2}+cK A_{1}
+\frac{d}{dt}\left(\int cR||{\rm Rm}||^{p-1}
\eta^{2p}dV\right)\right]
$$
$$
+ \ \epsilon c\int(-R)||{\rm Rm}||^{p-3}
\left(\partial_{t}||{\rm Rm}||^{2}\right)
\eta^{2p}dV.
$$
Choosing $\epsilon=\frac{1}{2}$ yields
$$
\frac{c}{2}\int(-R)||{\rm Rm}||^{p-3}\left(\partial_{t}||{\rm Rm}||^{2}
\right)\eta^{2p}dV
$$
$$
\leq \ \ \frac{3}{50}B_{1}+cK B_{2}
+cK A_{4}+cK^{2} A_{2}+cK A_{1}+\frac{d}{dt}
\left(\int cR||{\rm Rm}||^{p-1}\eta^{2p}dV
\right)
$$
and
$$
c\int||\nabla \bT||^{2}||{\rm Rm}||^{p-1}
\eta^{2p}dV
$$
$$
\leq \ \ \frac{8}{50}B_{1}
+cK B_{2}+c K A_{4}+c K^{2}A_{2}+c KA_{1}
+\frac{d}{dt}\left(\int 2cR||{\rm Rm}||^{p-1}\eta^{2p}dV
\right).
$$
Thus
\begin{eqnarray}
&&c\int(-R)||{\rm Rm}||^{p-3}
\left(\partial_{t}||{\rm Rm}||^{2}
\right)\eta^{2p}dV \ \ \leq \ \ \frac{3}{50}B_{1}+cK B_{2}\nonumber\\
&&+ \ cK A_{4}+c K^{2}A_{2}+c KA_{1}+\frac{d}{dt}
\left(\int cR||{\rm Rm}||^{p-1}\eta^{2p}dV
\right)\label{3.45}
\end{eqnarray}
and
\begin{eqnarray}
&& c\int||\nabla \bT||^{2}||{\rm Rm}||^{p-1}
\eta^{2p}dV \ \ \leq \ \ \frac{8}{50}B_{1}+c K B_{2}\nonumber\\
&&+ \ cK A_{4}+c K^{2}A_{2}+cK A_{1}
+\frac{d}{dt}\left(\int cR||{\rm Rm}||^{p-1}\eta^{2p}
dV\right)\label{3.46}
\end{eqnarray}
and
\begin{eqnarray}
&& c\int\left(-\blacksquare||\bT||^{2}\right)||{\rm Rm}||^{p-1}
\eta^{2p}dV \ \ \leq \ \ \frac{5}{50}B_{1}
+c K B_{2}\nonumber\\
&&+ \ c K^{2}A_{2}+cK A_{1}+\frac{d}{dt}
\left(\int cR||{\rm Rm}||^{p-1}\eta^{2p}
dV\right).\label{3.47}
\end{eqnarray}
From (\ref{3.43}) and (\ref{3.47}) we arrive at

\begin{lemma}\label{l3.4} One has
\begin{eqnarray}
A'_{1}&\leq& 2B_{1}+c K B_{2}+cK A_{4}
+c K^{2}A_{2}+c K A_{1}\nonumber\\
&&+ \ \frac{d}{dt}\left(\int cR||{\rm Rm}||^{p-1}
\eta^{2p}dV\right).\label{3.48}
\end{eqnarray}
\end{lemma}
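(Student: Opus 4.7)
The plan is to substitute the flux estimate (\ref{3.47}) into the right-hand side of (\ref{3.43}) from Lemma \ref{l3.3}. Lemma \ref{l3.3} packages every $G_{2}$-specific torsion contribution into a single heat-type integral $c\int(-\blacksquare_{t}||\bT_{t}||^{2}_{t})||{\rm Rm}_{t}||^{p-1}_{t}\eta^{2p}\,dV_{t}$, and (\ref{3.47}) bounds this integral by $\tfrac{5}{50}B_{1}$ plus terms of the allowed type $cKB_{2}$, $cK^{2}A_{2}$, $cKA_{1}$, together with a total time derivative $\tfrac{d}{dt}\int cR_{t}||{\rm Rm}_{t}||^{p-1}_{t}\eta^{2p}\,dV_{t}$.

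Combining the two inequalities, the $B_{1}$ from (\ref{3.43}) and the $\tfrac{5}{50}B_{1}$ from (\ref{3.47}) merge into at most $2B_{1}$; the other $cKB_{2}$, $cKA_{1}$, $cK^{2}A_{2}$ contributions absorb into constants of the same form; the term $cKA_{4}$ from (\ref{3.43}) is already present; and the total-derivative term passes through unchanged. This produces exactly (\ref{3.48}), so at the level of Lemma \ref{l3.4} there is no new analysis to do beyond the bookkeeping.

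The actual content therefore lies in (\ref{3.47}), which I would derive by first rewriting $-\blacksquare_{t}||\bT_{t}||^{2}_{t}=(\partial_{t}-\blacktriangle_{t})R_{t}$ via $R_{t}=-||\bT_{t}||^{2}_{t}$, extracting $\partial_{t}R_{t}$ as the total derivative $\tfrac{d}{dt}\int cR_{t}||{\rm Rm}_{t}||^{p-1}_{t}\eta^{2p}\,dV_{t}$, integrating $\blacktriangle_{t}R_{t}$ by parts, and controlling the resulting pairings by Cauchy--Schwarz together with $|\nabla_{t}R_{t}|\le c||\nabla_{t}{\rm Ric}_{t}||_{t}$. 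The main obstacle is the commutator $-cR_{t}\partial_{t}||{\rm Rm}_{t}||^{p-1}_{t}$, which reopens the full evolution equation for $||{\rm Rm}_{t}||^{2}_{t}$; in particular its subterm $\nabla^{2}_{t}\widehat{\bT}_{t}\ast{\rm Rm}_{t}$, after one integration by parts to expose $\nabla_{t}\widehat{\bT}_{t}\sim\bT_{t}\ast\nabla_{t}\bT_{t}$ (of size $K^{1/2}||\nabla_{t}\bT_{t}||_{t}$), produces an integral $\int||\nabla_{t}\bT_{t}||^{2}_{t}||{\rm Rm}_{t}||^{p-1}_{t}\eta^{2p}\,dV_{t}$ which by (\ref{3.35}) feeds back into $c\int(-\blacksquare_{t}||\bT_{t}||^{2}_{t})||{\rm Rm}_{t}||^{p-1}_{t}\eta^{2p}\,dV_{t}$ itself. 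Closing the resulting self-referential inequality of the form $X\le\epsilon X+(\text{good})$ by choosing $\epsilon=\tfrac12$ yields (\ref{3.47}), and hence completes the proof of Lemma \ref{l3.4}.
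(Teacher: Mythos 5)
Your proposal matches the paper's proof exactly: Lemma~\ref{l3.4} is obtained by inserting (\ref{3.47}) into (\ref{3.43}), and your sketch of (\ref{3.47}) — trading $-\blacksquare_{t}||\bT_{t}||^{2}_{t}$ for $(\partial_{t}-\blacktriangle_{t})R_{t}$, extracting the total time derivative, integrating $\blacktriangle_{t}R_{t}$ by parts, expanding the commutator $-cR\,\partial_{t}||{\rm Rm}||^{p-1}$ via the evolution of $||{\rm Rm}||^{2}$, and closing the self-referential bound coming from the $\nabla^{2}_{t}\widehat{\bT}_{t}\ast{\rm Rm}_{t}$ term and (\ref{3.35}) with $\epsilon=\tfrac12$ — is precisely the chain leading to (\ref{3.45})--(\ref{3.47}) in the text. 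No substantive difference to report.
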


We next estimate $B_{1}$ and $B_{2}$. Actually, we shall see that
$B_{1}$ can be estimated in terms of $B_{2}$. Hence the key step is to
estimate $B_{2}$. For $B_{1}$, using
\begin{eqnarray*}
||\nabla{\rm Ric}||^{2}
&=&-\frac{1}{2}\blacksquare||{\rm Ric}||^{2}
+{\rm Ric}\ast{\rm Ric}\ast{\rm Rm}
-\frac{1}{3}(\blacktriangle R)\bT-\frac{2}{3}R||{\rm Ric}||^{2}\\
&&+ \ 2\langle\langle{\rm Ric}, \blacktriangle\widehat{\bT}\rangle
\rangle+\frac{1}{3}\langle\langle{\rm Ric},\nabla^{2}R\rangle\rangle
+{\rm Ric}\ast\widehat{\bT}\ast{\rm Rm}+{\rm Ric}\ast\nabla^{2}
\widehat{\bT}.
\end{eqnarray*}
we obtain
$$
B_{1} \ \ \leq \ \ \frac{1}{2K}\int||{\rm Rm}||^{p-1}
\eta^{2p}
\left(\blacktriangle-\partial_{t}\right)||{\rm Ric}||^{2}dV
+c K A_{1}
$$
\begin{equation}
+ \ \frac{1}{3K}\int(-R)||{\rm Rm}||^{p-1}
\eta^{2p}\Delta R\!\ dV
+\frac{2}{K}\int\langle\langle{\rm Ric},\blacktriangle\widehat{\bT}
\rangle
\rangle||{\rm Rm}||^{p-1}\eta^{2p}dV\label{3.49}
\end{equation}
$$
+ \ \frac{1}{3K}\int\langle\langle{\rm Ric},
\nabla^{2}R\rangle\rangle||{\rm Rm}||^{p-1}
\eta^{2p}dV+\frac{1}{K}\int||{\rm Rm}||^{p-1}\left({\rm Ric}
\ast\nabla^{2}\widehat{\bT}\right)\eta^{2p}dV.
$$
From the estimates $\nabla||{\rm Ric}||^{2}
\lesssim ||{\rm Ric}||||\nabla{\rm Ric}||$, $\nabla||{\rm Rm}||^{p-1}
\lesssim ||{\rm Rm}||^{p-2}||\nabla{\rm Rm}||$, and $
\partial_{t}||{\rm Rm}||^{p-1}=\frac{p-1}{2}||{\rm Rm}||^{p-3}
\partial_{t}||{\rm Rm}||^{2}$, we have
$$
\int||{\rm Rm}||^{p-1}
\eta^{2p}
\left(\blacktriangle-\partial_{t}\right)||{\rm Ric}||^{2}dV
$$
$$
= \ \ \int\nabla||{\rm Ric}||^{2}\ast\nabla
\left(||{\rm Rm}||^{p-1}
\eta^{2p}\right)dV
-\int||{\rm Rm}||^{p-1}
\eta^{2p}\left(\partial_{t}||{\rm Ric}||^{2}
\right)dV
$$
$$
= \ \ \int\left(\nabla||{\rm Ric}||^{2}\ast\nabla
||{\rm Rm}||^{p-1}\right)
\eta^{2p}dV+\int\left(\nabla||{\rm Ric}||^{2}\ast
\nabla
\eta\right)||{\rm Rm}||^{p-1}\eta^{2p-1}dV
$$
$$
- \ \frac{d}{dt}\left[\int||{\rm Rm}||^{p-1}
\eta^{2p}
||{\rm Ric}||^{2}dV\right]
+\int\left(\partial_{t}||{\rm Rm}||^{p-1}
\right)\eta^{2p}||{\rm Ric}||^{2}dV
$$
$$
+ \ \int||{\rm Rm}||^{p-1}\eta^{2p}
||{\rm Ric}||^{2}(\partial_{t}dV)
$$
$$
\leq \ \ cK\int||\nabla{\rm Ric}||||\nabla{\rm Rm}||
||{\rm Rm}||^{p-2}\eta^{2p}dV
+cK\int||\nabla{\rm Ric}||||\nabla
\eta||||{\rm Rm}||^{p-1}\eta^{2p-1}dV
$$
$$
+ \ c\int||{\rm Rm}||^{p-3}
\left(\partial_{t}||{\rm Rm}||^{2}\right)
\eta^{2p}||{\rm Ric}||^{2}dV
+cK^{2}A_{1}
$$
$$
- \ \frac{d}{dt}\left[\int||{\rm Rm}||^{p-1}||{\rm Ric}||^{2}
\eta^{2p}dV\right]
$$
$$
\leq \ \ cK\left(\frac{1}{50c}B_{1}+c K B_{2}\right)
+cK\left(\frac{1}{50 c}B_{1}+c K A_{4}\right)
+cK^{2}A_{1}
$$
$$
+ \ c\int||{\rm Ric}||^{2}||{\rm Rm}||^{p-3}
\eta^{2p}\left(\partial_{t}
||{\rm Rm}||^{2}\right)dV
-\frac{d}{dt}\left[\int||{\rm Rm}||^{p-1}||{\rm Ric}||^{2}
\eta^{2p}dV\right]
$$
$$
\leq \ \ \frac{2}{50}K B_{1}
+cK^{2}B_{2}+c K^{2}A_{4}+cK^{2}A_{1}
$$
$$
+ \ c\int||{\rm Ric}||^{2}||{\rm Rm}||^{p-3}
\eta^{2p}\left(\partial_{t}
||{\rm Rm}||^{2}\right)dV
-\frac{d}{dt}\left[\int||{\rm Rm}||^{p-1}||{\rm Ric}||^{2}
\eta^{2p}dV\right].
$$
Thus
\begin{equation}
\int||{\rm Rm}||^{p-1}
\eta^{2p}\blacksquare||{\rm Ric}||^{2}dV \ \
\leq \ \ \frac{2}{50}KB_{1}+c K^{2}B_{2}+c K^{2}A_{4}+cK^{2}A_{1}
\label{3.50}
\end{equation}
$$
+ \ c\int||{\rm Ric}||^{2}||{\rm Rm}||^{p-3}
\eta^{2p}\left(\partial_{t}
||{\rm Rm}||^{2}\right)dV
-\frac{d}{dt}\left[\int||{\rm Rm}||^{p-1}||{\rm Ric}||^{2}
\eta^{2p}dV\right].
$$
Consider the term
$$
c\int||{\rm Ric}||^{2}||{\rm Rm}||^{p-3}
\eta^{2p}\left(\partial_{t}||{\rm Rm}||^{2}\right)
dV \ \ = \ \ c\int||{\rm Ric}||^{2}
||{\rm Rm}||^{p-3}\eta^{2p}
$$
$$
\bigg[\nabla^{2}{\rm Ric}\ast{\rm Rm}
+{\rm Ric}\ast{\rm Rm}\ast{\rm Rm}+{\rm Rm}
\ast{\rm Rm}\ast\widehat{\bT}+{\rm Ric}
\ast\nabla^{2}||\bT||^{2}+{\rm Rm}\ast\nabla^{2}
\widehat{\bT}
$$
$$
+ \ \frac{4}{3}||\bT||^{2}||{\rm Rm}||^{2}\bigg]dV \ \ \leq \ \
c\int||{\rm Ric}||^{2}
||{\rm Rm}||^{p-3}
\eta^{2p}\bigg[\nabla^{2}{\rm Ric}\ast{\rm Rm}
-\nabla^{2}R\ast{\rm Ric}
$$
$$
+ \ \nabla^{2}\widehat{\bT}\ast{\rm Rm}
\bigg]dV+cK^{2}A_{2}.
$$
The three terms in the bracket can be estimated as follows. Firstly
$$
c\int||{\rm Ric}||^{2}||{\rm Rm}||^{p-3}
\eta^{2p}\left(\nabla^{2}{\rm Ric}\ast{\rm Rm}\right)dV
$$
$$
= \ \ c\int||{\rm Ric}||^{2}||{\rm Rm}||^{p-3}\eta^{2p}
\left(\nabla{\rm Ric}\ast\nabla{\rm Rm}\right)dV
$$
$$
+ \ c\int\left\{\nabla
\left[||{\rm Ric}||^{2}||{\rm Rm}||^{p-3}
\eta^{2p}\right]\ast\nabla{\rm Ric}\ast{\rm Rm}\right\}dV
$$
$$
= \ \ c\int||{\rm Ric}||^{2}
||{\rm Rm}||^{p-3}
\eta^{2p}\left(\nabla{\rm Ric}\ast\nabla{\rm Rm}\right)
dV
$$
$$
+ \ c\int||{\rm Rm}||^{p-3}
\eta^{2p}\left(\nabla||{\rm Ric}||^{2}\ast\nabla{\rm Ric}
\ast{\rm Rm}\right)dV
$$
$$
+ \ c\int||{\rm Ric}||^{2}
\eta^{2p}\left(\nabla||{\rm Rm}||^{p-3}\ast\nabla{\rm Ric}
\ast{\rm Rm}\right)dV
$$
$$
+ \ c\int||{\rm Ric}||^{2}||{\rm Rm}||^{p-3}
\eta^{2p-1}\left(\nabla\eta\ast\nabla{\rm Ric}\ast{\rm Rm}\right)dV
$$
$$
\leq \ cK\int||{\rm Rm}||^{p-2}
\eta^{2p}||\nabla{\rm Ric}||||\nabla{\rm Rm}||dV
+cK\int||{\rm Rm}||^{p-1}\eta^{2p-1}
||\nabla{\rm Ric}||||\nabla\eta||dV
$$
$$
\leq \ \ cK\left(\epsilon B_{1}+\frac{K}{\epsilon}B_{2}
\right)+cK\left(\epsilon B_{1}+\frac{K}{\epsilon}A_{4}
\right) \ \ \leq \ \ \frac{1}{50}K B_{1}
+cK^{2}B_{2}+cK^{2}A_{4}.
$$
The same estimate holds for
\begin{equation*}
c\int||{\rm Ric}||^{2}||{\rm Rm}||^{p-3}
\eta^{2p}\left(-\nabla^{2}R\ast{\rm Ric}\right)dV.
\end{equation*}
Finally,
$$
c\int||{\rm Ric}||^{2}||{\rm Rm}||^{p-3}
\eta^{2p}\left(\nabla^{2}\widehat{\bT}\ast{\rm Rm}\right)
dV \ \ = \ \ c\int||{\rm Ric}||^{2}||{\rm Rm}||^{p-3}
\eta^{2p}
$$
$$
\left(\nabla\widehat{\bT}\ast\nabla{\rm Rm}\right)dV+c\int\left\{\nabla\left(||{\rm Ric}||^{2}
||{\rm Rm}||^{p-3}\eta^{2p}\right)\ast\nabla\widehat{\bT}
\ast{\rm Rm}\right\}dV
$$
$$
\leq \ \ c\int||{\rm Ric}||^{2}
||{\rm Rm}||^{p-3}\eta^{2p}
\left(K^{1/2}||\nabla\bT||||\nabla{\rm Rm}||\right)
dV
$$
$$
+ \ c\int\left(\nabla||{\rm Ric}||^{2}\right)
||{\rm Rm}||^{p-3}\eta^{2p}
||\nabla\widehat{\bT}||||{\rm Rm}||dV
$$
$$
+ \ c\int||{\rm Rm}||^{2}
\left(\nabla||{\rm Rm}||^{p-3}\right)
\eta^{2p}||\nabla\widehat{\bT}||||{\rm Rm}||dV
$$
$$
+ \ c\int||{\rm Ric}||^{2}||{\rm Rm}||^{p-3}
\eta^{2p-1}
||\nabla\eta||||\nabla
\widehat{\bT}||||{\rm Rm}||dV
$$
$$
\leq \ \ c K\int||{\rm Rm}||^{p-2}
\eta^{2p}\left(K^{1/2}||\nabla\bT||||\nabla{\rm Rm}||\right)dV
$$
$$
+ \ cK\int||{\rm Rm}||^{p-1}\eta^{2p-1}
\left(K^{1/2}||\nabla
\eta||||\nabla\bT||\right)dV
$$
$$
\leq \ \ K\left[cK B_{2}
+\frac{cK}{\epsilon}A_{4}
+\epsilon c\int||\nabla\bT||^{2}
||{\rm Rm}||^{p-1}\eta^{2p}dV\right]
$$
$$
\leq \ \ \frac{8}{50}K B_{1}+cK^{2}B_{2}
+cK^{2}A_{4}+cK^{3}A_{2}+cK^{2}A_{1}
+\frac{d}{dt}\left[cK\int R||{\rm Rm}||^{p-1}
\eta^{2p}
dV\right]
$$
Therefore
$$
c\int||{\rm Ric}||^{2}
||{\rm Rm}||^{p-3}
\eta^{2p}
\left(\partial_{t}||{\rm Rm}||^{2}
\right)dV \ \ \leq \ \ \frac{10}{50}K B_{1}+cK^{2}B_{2}
+cK^{2}A_{4}+cK^{3}A_{2}
$$
\begin{equation}
+ \ cK^{2}A_{1}
+cK\frac{d}{dt}\left[\int R||{\rm Rm}||^{p-1}\eta^{2p}
dV\right]\label{3.51}
\end{equation}
and
$$
\frac{1}{2K}\int||{\rm Rm}||^{p-1}
\eta^{2p}(\blacktriangle-\partial_{t})
||{\rm Ric}||^{2}dV \ \ \leq \ \
\frac{6}{50}B_{1}+cK B_{2}+cK A_{4}
+cK^{2}A_{2}+cK A_{1}
$$
$$
- \ \frac{1}{K}\frac{d}{dt}
\left[\int||{\rm Rm}||^{p-1}
||{\rm Ric}||^{2}\eta^{2p}dV
\right]
+c\frac{d}{dt}\left[\int R||{\rm Rm}||^{p-1}
\eta^{2p}dV\right]
$$
\begin{equation}
\leq \ \ \frac{6}{50}B_{1}+cKB_{2}
+cKA_{4}+cK^{2}A_{2}+cK A_{1}\label{3.52}
\end{equation}
$$
- \ \frac{d}{dt}
\left[\frac{1}{K}\int||{\rm Rm}||^{p-1}
||{\rm Ric}||^{2}\eta^{2p}dV
+c\int(-R)||{\rm Rm}||^{p-1}
\eta^{2p}dV\right].
$$
In the following, we estimate the left four terms in (\ref{3.49}). We start from terms involving the scalar curvature.
$$
\frac{1}{3K}\int(-R)||{\rm Rm}||^{p-1}\eta^{2p}
\Delta R\!\ dV \ \ = \ \ -\frac{1}{3K}
\int\nabla R\cdot\nabla\left[(-R)||{\rm Rm}||^{p-1}
\eta^{2p}\right]dV
$$
$$
= \ \ -\frac{1}{3K}\int\nabla R
\cdot\bigg[-\nabla R||{\rm Rm}||^{p-1}
\eta^{2p}+(-R)\nabla||{\rm Rm}||^{p-1}
\eta^{2p}
$$
\begin{equation}
+ \ 2p(-R)||{\rm Rm}||^{p-1}
\eta^{2p-1}\nabla\eta\bigg]dV \ \ \leq \ \ \frac{1}{3K}\int||\nabla R||^{2}
||{\rm Rm}||^{p-1}\eta^{2p}dV\label{3.53}
\end{equation}
$$
+ \ \frac{c}{K}\int(-R)||{\rm Rm}||^{p-2}||\nabla R||
||\nabla{\rm Rm}||\eta^{2p}dV
$$
$$
+ \ \frac{c}{K}\int(-R)||{\rm Rm}||^{p-1}
\eta^{2p-1}||\nabla R||||\nabla
\eta||dV
$$
$$
\leq \ \ \frac{1}{3K}
\int||\nabla R||^{2}||{\rm Rm}||^{p-1}
\eta^{2p}dV+\frac{1}{3K}\int||\nabla R||^{2}
||{\rm Rm}||^{p-1}\eta^{2p}dV
+cK B_{2}
$$
$$
+ \ \frac{1}{3K}\int||\nabla R||^{2}
||{\rm Rm}||^{p-1}\eta^{2p}dV
+cK A_{4}
$$
$$
\leq \ \ \frac{1}{K}\int||\nabla R||^{2}
||{\rm Rm}||^{p-1}\eta^{2p}dV
+cK B_{2}+cK A_{4}.
$$
The another term involving the scalar curvature can be estimated by
$$
\frac{1}{3K}\int\langle\langle {\rm Ric},
\nabla^{2}R\rangle\rangle||{\rm Rm}||^{p-1}\eta^{2p}
dV \ \ = \ \
-\frac{1}{3K}\int\nabla^{j}R\nabla^{i}\left[R_{ij}
||{\rm Rm}||^{p-1}\eta^{2p}\right]dV
$$
$$
= \ \ -\frac{1}{3K}
\int\nabla^{j}R\bigg[\frac{1}{2}\nabla_{j}R||{\rm Rm}||^{p-1}
\eta^{2p}+R_{ij}\nabla^{i}||{\rm Rm}||^{p-1}\eta^{2p}
$$
\begin{equation}
+ \ R_{ij}||{\rm Rm}||^{p-1}2p\eta^{2p-1}
\nabla^{i}\eta\bigg]dV \ \ \leq \ \
-\frac{1}{6K}\int||\nabla R||^{2}
||{\rm Rm}||^{p-1}
\eta^{2p}dV\label{3.54}
\end{equation}
$$
+ \ \frac{c}{K}\int||{\rm Ric}||||\nabla R||
||{\rm Rm}||^{p-2}||\nabla{\rm Rm}||
\eta^{2p}dV
$$
$$
+ \ \frac{c}{K}\int||\nabla R||||{\rm Ric}||||{\rm Rm}||^{p-1}
\eta^{2p-1}||\nabla\eta||dV
$$
$$
\leq \ \ -\frac{1}{6K}
\int||\nabla R||^{2}||{\rm Rm}||^{p-1}
\eta^{2p}dV+\frac{1}{18K}\int||\nabla R||^{2}
||{\rm Rm}||^{p-1}\eta^{2p}dV
+cK B_{2}
$$
$$
+ \ \frac{1}{18K}\int||\nabla R||^{2}
||{\rm Rm}||^{p-1}\eta^{2p}
dV+cK A_{4} \ \ \leq \ \ cK B_{2}+cK A_{4}.
$$
Using (\ref{3.46}) we obtain
$$
\frac{2}{K}\int\langle\langle{\rm Ric},\blacktriangle\widehat{
\bT}\rangle
\rangle||{\rm Rm}||^{p-1}
\eta^{2p}dV \ \ = \ \ \frac{1}{K}
\int\left({\rm Ric}\ast\blacktriangle\widehat{\bT}\right)
||{\rm Rm}||^{p-1}\eta^{2p}dV
$$
$$
= \ \ \frac{1}{K}\int\left(\nabla{\rm Ric}
\ast\nabla\widehat{T}\right)||{\rm Rm}||^{p-1}
\eta^{2p}dV
+\frac{1}{K}\int{\rm Ric}\ast\nabla\widehat{\bT}
\ast\nabla\left(||{\rm Rm}||^{p-1}
\eta^{2p}\right)dV
$$
$$
\leq\frac{c}{K}\int||\nabla{\rm Ric}||
||\nabla\widehat{\bT}||||{\rm Rm}||^{p-1}
\eta^{2p}dV+\frac{c}{K}\int||{\rm Ric}||
||\nabla\widehat{\bT}||||{\rm Rm}||^{p-2}
||\nabla{\rm Rm}||\eta^{2p}dV
$$
$$
+ \ \frac{c}{K}\int||{\rm Ric}||
||\nabla\widehat{\bT}||||{\rm Rm}||^{p-1}
\eta^{2p-1}||\nabla\eta||dV
$$
\begin{equation}
\leq \ \ \frac{1}{50}B_{1}
+c\int||\nabla\bT||^{2}
||{\rm Rm}||^{p-1}
\eta^{2p}dV
+cK B_{2}\label{3.55}
\end{equation}
$$
+ \ c\int||\nabla\bT||^{2}
||{\rm Rm}||^{p-1}\eta^{2p}dV
+cK A_{4}+c\int||\nabla\bT||^{2}||{\rm Rm}||^{p-1}
\eta^{2p}dV
$$
$$
\leq \ \ \frac{1}{50}B_{1}
+cK B_{2}+cK A_{4}
+c\int||\nabla\bT||^{2}||{\rm Rm}||^{p-1}
\eta^{2p}dV
$$
$$
\leq \ \ \frac{9}{50}B_{1}
+cK B_{2}+cK A_{4}+cK^{2}A_{2}
+cK A_{1}
+\frac{d}{dt}\left[\int cR||{\rm Rm}||^{p-1}
\eta^{2p}dV\right].
$$
Similarly, we can prove
$$
\frac{1}{K}\int\left({\rm Ric}\ast\nabla^{2}\widehat{\bT}
\right)||{\rm Rm}||^{p-1}\eta^{2p}dV \ \ = \ \
\frac{1}{K}\int\left(\nabla{\rm Ric}\ast\nabla\widehat{\bT}
\right)||{\rm Rm}||^{p-1}\eta^{2p}dV
$$
$$
+ \ \frac{1}{K}\int{\rm Ric}\ast\nabla\widehat{\bT}
\ast\nabla\left(||{\rm Rm}||^{p-1}
\eta^{2p}\right)dV\ \
\leq \ \ \frac{1}{K}\int\left(\nabla{\rm Ric}
\ast\nabla\widehat{\bT}\right)||{\rm Rm}||^{p-1}
\eta^{2p}dV
$$
$$
+ \ \frac{c}{K}\int||{\rm Ric}||||\nabla\widehat{\bT}||
||{\rm Rm}||^{p-2}||\nabla{\rm Rm}||\eta^{2p}dV
$$
\begin{equation}
+ \ \frac{c}{K}\int||{\rm Ric}||||\nabla\widehat{\bT}||
||{\rm Rm}||^{p-1}\eta^{2p-1}
||\nabla\eta||dV\label{3.56}
\end{equation}
$$
\leq \frac{c}{K}\int||\nabla{\rm Ric}||
||\nabla\widehat{\bT}||||{\rm Rm}||^{p-1}
\eta^{2p}dV+\frac{c}{K}\int||{\rm Ric}||||\nabla
\widehat{\bT}||||{\rm Rm}||^{p-2}||\nabla{\rm Rm}||
\eta^{2p}dV
$$
$$
+ \ \frac{c}{K}\int||{\rm Ric}||||\nabla\widehat{\bT}||
||{\rm Rm}||^{p-1}
\eta^{2p-1}||\nabla\eta||dV
$$
$$
\leq \ \ \frac{9}{50}B_{1}
+cK B_{2}+cK A_{4}+cK^{2}A_{2}
+cK A_{1}
+\frac{d}{dt}\left[\int cR||{\rm Rm}||^{p-1}
\eta^{2p}dV\right].
$$
Plugging (\ref{3.50}) and (\ref{3.53}) -- (\ref{3.56}) into
(\ref{3.49}), and using (\ref{3.46}) and $||\nabla R||^{2}
\leq cK||\nabla\bT||^{2}$, we obtain
$$
B_{1} \ \ \leq \ \ \frac{6}{50}B_{1}+cK B_{2}
+cK A_{4}+cK^{2}A_{2}+cK A_{1}
$$
$$
- \ \frac{d}{dt}\left[\frac{1}{K}\int||{\rm Rm}||^{p-1}
||{\rm Ric}||^{2}\eta^{2p}dV
+c\int(-R)||{\rm Rm}||^{p-1}
\eta^{2p}dV\right]
$$
$$
+ \ \frac{1}{K}\int||\nabla R||^{2}
||{\rm Rm}||^{p-1}\eta^{2p}dV
+\frac{18}{50} B_{1}-\frac{d}{dt}\left[c\int(-R)||{\rm Rm}||^{p-1}
\eta^{2p}dV\right]
$$
$$
\leq \ \ \frac{32}{50}B_{1}+cK B_{2}+cK A_{4}
+cK^{2}A_{2}+cK A_{1}
$$
$$
- \ \frac{d}{dt}\left[\frac{1}{K}\int||{\rm Rm}||^{p-1}
||{\rm Ric}||^{2}\eta^{2p}dV
+c\int(-R)||{\rm Rm}||^{p-1}
\eta^{2p}dV\right].
$$
Thus
\begin{equation}
B_{1} \ \ \leq \ \ cK B_{2}+cK A_{4}+cK^{2}A_{2}+cK A_{1}\label{3.57}
\end{equation}
$$
- \ \frac{d}{dt}\left[\frac{1}{K}\int||{\rm Rm}||^{p-1}
||{\rm Ric}||^{2}\eta^{2p}dV
+c\int(-R)||{\rm Rm}||^{p-1}
\eta^{2p}dV\right]
$$
From (\ref{3.48}) and (\ref{3.57}), we can conclude that

\begin{lemma}\label{l3.5} One has
\begin{equation}
A'_{1} \ \ \leq \ \ cK B_{2}+cK A_{4}+cK^{2}A_{2}+cK A_{1}
\label{3.58}
\end{equation}
$$
- \ \frac{d}{dt}\left[\frac{c}{K}\int||{\rm Rm}||^{p-1}
||{\rm Ric}||^{2}\eta^{2p}dV
+c\int(-R)||{\rm Rm}||^{p-1}
\eta^{2p}dV\right].
$$
\end{lemma}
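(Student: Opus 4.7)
The plan is to combine the two estimates already produced in the preceding text: the bound (\ref{3.48}) from Lemma \ref{l3.4}, which controls $A_1'$ by $2B_1$ plus the ``good'' quantities $cKB_2$, $cKA_4$, $cK^2A_2$, $cKA_1$ and a time derivative of $c\int R||{\rm Rm}||^{p-1}\eta^{2p}dV$, together with the estimate (\ref{3.57}), which bounds $B_1$ itself by the same kind of ``good'' quantities and a further time derivative. Substituting (\ref{3.57}) into (\ref{3.48}) yields Lemma \ref{l3.5} directly; the crucial structural point is that along the Laplacian flow for closed $G_{2}$-structures one has $R_t\leq 0$ by (\ref{2.26}), so the boundary integrand $-R||{\rm Rm}||^{p-1}\eta^{2p}$ is nonnegative and the two time-derivative contributions combine into a single monotone expression carrying a negative sign on the right.

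The substance of the argument therefore lies in (\ref{3.57}). To derive it, I would start from the Bochner-type identity
$$||\nabla{\rm Ric}||^{2}=-\tfrac{1}{2}\blacksquare||{\rm Ric}||^{2}+{\rm Ric}\ast{\rm Ric}\ast{\rm Rm}+(\text{scalar/torsion terms involving }\nabla^{2}R,\ \nabla^{2}\widehat{\bT}),$$
which follows from the evolution equation for $R_{ij}$. Multiplying by $\frac{1}{K}||{\rm Rm}||^{p-1}\eta^{2p}$ and integrating over $\mathcal{M}$, I would integrate by parts on the $-\tfrac{1}{2}\blacksquare||{\rm Ric}||^{2}$ piece to produce the boundary term $\frac{1}{K}\frac{d}{dt}\int||{\rm Rm}||^{p-1}||{\rm Ric}||^{2}\eta^{2p}dV$ together with gradient cross terms that are controlled, via Cauchy-Schwarz, by small multiples of $B_{1}$ itself (absorbed on the left) and by the good quantities $cKB_{2}$, $cKA_{4}$. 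For the remaining scalar-curvature and $\widehat{\bT}$ contributions I would use $||\bT||^{2}=-R$ together with the pointwise bound $||\bT||\lesssim\sqrt{K}$ to obtain $||\nabla R||^{2}\leq cK||\nabla\bT||^{2}$, and then invoke (\ref{3.46}) to exchange the resulting $||\nabla\bT||^{2}$ integrals for further boundary terms of the form $\frac{d}{dt}\int cR||{\rm Rm}||^{p-1}\eta^{2p}dV$, modulo good quantities.

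The main obstacle is not any one estimate but the sheer bookkeeping: at each application of Cauchy-Schwarz a small fraction (on the order of $\tfrac{1}{50}$) of $B_1$ reappears on the right, and one must keep the cumulative coefficient strictly below $1$ so that the final absorption from right to left is legitimate. Careful power counting, which requires the hypothesis $p\geq 5$ so that the exponents $(p-3)/2$ and $(p-1)/2$ appearing after differentiating $||{\rm Rm}||^{p-1}$ remain nonnegative, is what allows every cross term coming from $\nabla^{2}{\rm Ric}$, $\nabla^{2}R$, or $\nabla^{2}\widehat{\bT}$ paired with $||{\rm Rm}||^{p-3}$ or $||{\rm Rm}||^{p-2}$ to be split into a small-$B_1$ piece plus a good-quantity piece. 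Once (\ref{3.57}) is established in this way, plugging it into (\ref{3.48}) and invoking the nonpositivity of $R_t$ to merge the boundary terms finishes the proof of Lemma \ref{l3.5}.
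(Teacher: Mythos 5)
Your proposal follows the paper's proof exactly: Lemma \ref{l3.5} is obtained by substituting the bound (\ref{3.57}) for $B_{1}$ into (\ref{3.48}) and merging the two $\frac{d}{dt}$ boundary terms (both of which carry nonnegative integrands because $R\leq 0$ by (\ref{2.26})), and your sketch of how (\ref{3.57}) itself is derived — evolution equation for $\|{\rm Ric}\|^{2}$, integration by parts, repeated Cauchy--Schwarz with absorption of small multiples of $B_{1}$, use of $\|\nabla R\|^{2}\lesssim K\|\nabla\bT\|^{2}$ and (\ref{3.46}) — matches the paper's computation leading from (\ref{3.49}) to (\ref{3.57}). One small slip: the hypothesis $p\geq 5$ is not needed because $(p-3)/2$ or $(p-1)/2$ must be nonnegative (those only require $p\geq 3$); it enters because differentiating $\|{\rm Rm}\|^{p-3}$ produces the factor $\|{\rm Rm}\|^{p-5}$ (as in the displayed identity before (\ref{3.60}) and in the $B_{2}$ estimate), which must be a genuine power.
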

Observe that two terms in the bracket are both nonnegative, since $R
=-||\bT||^{2}\leq0$.
\\

Finally, we estimate the term $B_{2}$. Using the evolution inequality
\begin{equation*}
||\nabla{\rm Rm}||^{2}
\leq-\frac{1}{2}\blacksquare||{\rm Rm}||^{2}
+c||{\rm Rm}||^{3}+c||\nabla^{2}\bT||||{\rm Rm}||^{3/2}
+c||{\rm Rm}||||\nabla \bT||^{2}
\end{equation*}
we obtain
$$
B_{2} \ \ = \ \ \int||\nabla{\rm Rm}||^{2}
||{\rm Rm}||^{p-3}\eta^{2p}dV \ \ \leq \ \
\int\bigg[-\frac{1}{2}\blacksquare||{\rm Rm}||^{2}
+c||{\rm Rm}||^{3}
$$
$$
+ \ c||\nabla^{2}\bT||||{\rm Rm}||^{3/2}
+c||{\rm Rm}||||\nabla \bT||^{2}\bigg]||{\rm Rm}||^{p-3}
\eta^{2p}dV
$$
\begin{equation}
\leq \ \ -\frac{1}{2}\int\left(\blacksquare||{\rm Rm}||^{2}
\right)||{\rm Rm}||^{p-3}\eta^{2p}dV
+cA_{1}\label{3.59}
\end{equation}
$$
+ \ c\int||\nabla^{2}\bT||||{\rm Rm}||^{p-3/2}\eta^{2p}dV
+c\int||\nabla^{2}\bT||^{2}||{\rm Rm}||^{p-2}
\eta^{2p}dV.
$$
For the first integral one has
$$
-\frac{1}{2}\int\left(\blacksquare||{\rm Rm}||^{2}
\right)||{\rm Rm}||^{p-3}\eta^{2p}dV \ \ = \ \
\frac{1}{2}\int\left(\blacktriangle||{\rm Rm}||^{2}\right)
||{\rm Rm}||^{p-3}\eta^{2p}dV
$$
$$
- \ \frac{1}{2}\int\left(\partial_{t}||{\rm Rm}||^{2}\right)
||{\rm Rm}||^{p-3}
\eta^{2p}dV \ \ = \ \
-\frac{1}{2}\int\left(\partial_{t}||{\rm Rm}||^{2}
\right)||{\rm Rm}||^{p-3}
\eta^{2p}dV
$$
$$
- \ \frac{1}{2}\int\nabla||{\rm Rm}||^{2}
\left[\left(\nabla||{\rm Rm}||^{p-3}
\right)\eta^{2p}+||{\rm Rm}||^{p-3}
\left(\nabla\eta^{2p}\right)\right]dV
$$
$$
= \ \ -\frac{p-3}{4}\int\left(\nabla||{\rm Rm}||^{2}\right)^{2}
||{\rm Rm}||^{p-5}\eta^{2p}dV
$$
$$
+ \ c\int||{\rm Rm}||^{p-2}||\nabla{\rm Rm}||||\nabla
\eta||\eta^{2p-1}dV
-\frac{1}{2}\int\left(\partial_{t}||{\rm Rm}||^{2}
\right)||{\rm Rm}||^{p-3}\eta^{2p}dV
$$
$$
\leq \ \ \frac{1}{50}
B_{2}+cA_{4}-\frac{1}{2}\int\left(\partial_{t}
||{\rm Rm}||^{2}\right)||{\rm Rm}||^{p-3}\eta^{2p}dV.
$$
Here we used the assumption that $p\geq5$. On the other hand,
$$
-\frac{1}{2}\int\left(\partial_{t}||{\rm Rm}||^{2}
\right)||{\rm Rm}||^{p-3}
\eta^{2p}dV \ \ = \ \ -\frac{1}{2}\frac{d}{dt}
\left[\int||{\rm Rm}||^{p-1}\eta^{2p}dV\right]
$$
$$
+ \ \frac{1}{2}\int||{\rm Rm}||^{2}
\left(\partial_{t}||{\rm Rm}||^{p-3}\right)
\eta^{2p}dV
+\frac{1}{2}\int||{\rm Rm}||^{p-1}
\eta^{2p}\left(\partial_{t}dV\right)
$$
$$
\leq \ \ \frac{p-3}{4}\int||{\rm Rm}||^{p-3}
\left(\partial_{t}||{\rm Rm}||^{2}
\right)\eta^{2p}dV+cA_{1}-\frac{1}{2}\frac{d}{dt}
\left[\int||{\rm Rm}||^{p-1}\eta^{2p}dV\right]
$$
so that
$$
-\frac{1}{2}
\int\left(\partial_{t}||{\rm Rm}||^{2}\right)
||{\rm Rm}||^{p-3}\eta^{2p}dV
\leq c A_{1}-\frac{1}{p-1}\frac{d}{dt}
\left[\int||{\rm Rm}||^{p-1}\eta^{2p}dV\right].
$$
Therefore
$$
-\frac{1}{2}\int\left(\blacksquare||{\rm Rm}||^{2}\right)
||{\rm Rm}||^{p-3}\eta^{2p}dV \ \ \leq \ \ \frac{1}{50}B_{2}+c A_{4}+cA_{1}
$$
\begin{equation}
- \ \frac{1}{p-1}\frac{d}{dt}\left[\int||{\rm Rm}||^{p-1}
\eta^{2p}dV\right].\label{3.60}
\end{equation}
To estimate the remainder two integrals, we recall from (\ref{3.9}) that
\begin{equation}
\nabla \bT={\rm Rm}\ast\varphi+\bT\ast \bT\ast\varphi\label{3.61}
\end{equation}
and from (\ref{2.14}) that
\begin{equation}
\nabla\varphi=\bT\ast\psi.\label{3.62}
\end{equation}
From (\ref{3.61}) we get
\begin{equation}
||\nabla \bT||\leq c||{\rm Rm}||+c||\bT||^{2}\leq c||{\rm Rm}||.
\label{3.63}
\end{equation}
In particular, the inequality (\ref{3.63}) yields
\begin{equation}
\int||\nabla \bT||^{2}||{\rm Rm}||^{p-2}
\eta^{2p}dV\leq c\int||{\rm Rm}||^{p}\eta^{2p}dV
\leq cA_{1}.\label{3.64}
\end{equation}
Taking the derivative of (\ref{3.61}) and using (\ref{3.62})
we obtain
\begin{equation}
\nabla^{2}\bT=\nabla{\rm Rm}\ast\varphi
+{\rm Rm}\ast \bT\ast\psi
+\nabla \bT\ast \bT\ast\varphi+\bT\ast \bT\ast \bT\ast\psi.
\label{3.65}
\end{equation}
The particular case $||\nabla^{2}\bT||\leq c||\nabla{\rm Rm}||
+c||{\rm Rm}||||\bT||+c||\nabla \bT||||\bT||+c||\bT||^{3}$ leads to
$$
c\int||\nabla^{2}\bT||||{\rm Rm}||^{p-3/2}\eta^{2p}dV\ \
\leq \ \ c\int\bigg[||\nabla{\rm Rm}||+||{\rm Rm}||||\bT||
+||\nabla \bT||||\bT||
$$
$$
+ \ ||\bT||^{3}\bigg]||{\rm Rm}||^{p-3/2}\phi^{2p}dV \ \
\leq \ \ c\int\left(||\nabla{\rm Rm}||||{\rm Rm}||^{p-3/2}
\eta^{p}\right)
\left(||{\rm Rm}||^{p/2}\eta^{p}
\right)dV
$$
\begin{equation}
+ \ c\int||{\rm Rm}||^{p}\eta^{2p}dV \ \
\leq \ \ \frac{1}{50}B_{2}+cA_{1}.\label{3.66}
\end{equation}
Plugging (\ref{3.60}), (\ref{3.64}), and (\ref{3.66}) into (\ref{3.59}) we arrive at
\begin{equation}
B_{2}\leq cA_{4}+cA_{1}-\frac{d}{dt}\left[\frac{1}{p-1}
\int||{\rm Rm}||^{p-1}
\eta^{2p}dV\right].\label{3.67}
\end{equation}
Together with (\ref{3.58}) and (\ref{3.67}) we finally obtain
$$
(A_{1}+cK A_{2})' \ \ \leq \ \  cK(A_{1}+cK A_{2})
+cK A_{4}
$$
\begin{equation}
- \ \frac{d}{dt}\left[
\frac{c}{K}\int||{\rm Rm}||^{p-1}||{\rm Ric}||^{2}
\eta^{2p}dV
+c\int(-R)||{\rm Rm}||^{p-1}
\eta^{2p}dV\right].\label{3.68}
\end{equation}
Equivalently,

\begin{lemma}\label{l3.6} If $||{\rm Ric}||\leq K$ and $p\geq5$, one has
$$
\frac{d}{dt}
\left[A_{1}+cK A_{2}+\frac{c}{K}\int||{\rm Rm}||^{p-1}
||{\rm Ric}||^{2}\eta^{2p}dV
+c\int(-R)||{\rm Rm}||^{p-1}
\eta^{2p}dV\right]
$$
\begin{equation}
\leq \ \ cK(A_{1}+cK A_{2})+cK A_{4}.\label{3.69}
\end{equation}
\end{lemma}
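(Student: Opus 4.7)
The plan is to derive Lemma \ref{l3.6} by stitching together Lemma \ref{l3.5} with the estimate (\ref{3.67}) for the bad term $B_2$. Lemma \ref{l3.5} already packages the hard work into an inequality of the form $A_1'\le cKB_2+cKA_4+cK^2A_2+cKA_1-\frac{d}{dt}[\cdots]$, so the only remaining issue is to re-express $cKB_2$ as a combination of the good quantities $A_4$, $A_1$, $A_2$ and a time derivative of a nonnegative integrand that can be absorbed on the left-hand side.

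First I would establish the bound (\ref{3.67}). Starting from the evolution inequality (\ref{3.32}) for $||\nabla_{t}{\rm Rm}_{t}||^{2}$, multiply by $||{\rm Rm}||^{p-3}\eta^{2p}$ and integrate. The dominant piece $-\tfrac12\int(\blacksquare||{\rm Rm}||^{2})||{\rm Rm}||^{p-3}\eta^{2p}\,dV$ splits into two parts: the Laplacian half, integrated by parts, yields the \emph{sign-favourable} term $-\frac{p-3}{4}\int\bigl(\nabla||{\rm Rm}||^{2}\bigr)^{2}||{\rm Rm}||^{p-5}\eta^{2p}\,dV$ together with $cA_{4}$ plus a small $\frac{1}{50}B_{2}$ which is absorbed; the hypothesis $p\ge5$ is precisely what forces this first term to be nonpositive so it may be dropped. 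The $\partial_t$ half is rewritten, via the product rule and the known $\partial_t dV$, as $-\frac{1}{p-1}\frac{d}{dt}\int||{\rm Rm}||^{p-1}\eta^{2p}\,dV$ modulo $cA_{1}$. The cross terms $c||\nabla^{2}\bT||\,||{\rm Rm}||^{p-3/2}$ and $c||{\rm Rm}||\,||\nabla\bT||^{2}\,||{\rm Rm}||^{p-3}$ are controlled by the pointwise estimates $||\nabla\bT||\le c||{\rm Rm}||$ and $||\nabla^{2}\bT||\le c||\nabla{\rm Rm}||+c||{\rm Rm}||\,||\bT||+c||\nabla\bT||\,||\bT||+c||\bT||^{3}$ coming from (\ref{3.61})--(\ref{3.65}); a single Cauchy--Schwarz split converts them into at most $\tfrac{1}{50}B_{2}+cA_{1}$. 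Collecting everything yields exactly (\ref{3.67}).

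With (\ref{3.67}) in hand, I would substitute it into Lemma \ref{l3.5}. The contribution $cK\cdot B_{2}$ becomes $cKcA_{4}+cKcA_{1}-\frac{cK}{p-1}\frac{d}{dt}A_{2}$; transferring the time derivative to the left and absorbing the fixed constants $\frac{1}{p-1}$ and the products of universal $c$'s into a single universal $c$, the left-hand side reorganises into $\frac{d}{dt}[A_{1}+cKA_{2}+\frac{c}{K}\int||{\rm Rm}||^{p-1}||{\rm Ric}||^{2}\eta^{2p}dV+c\int(-R)||{\rm Rm}||^{p-1}\eta^{2p}dV]$. The right-hand side consolidates to $cKA_{1}+cK^{2}A_{2}+cKA_{4}=cK(A_{1}+cKA_{2})+cKA_{4}$, which is precisely (\ref{3.69}).

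The main delicacy is the bookkeeping rather than any analytic subtlety: one must track that every application of Cauchy--Schwarz in the $B_2$ step leaves only a fraction $1/50$ of $B_2$ on the right (so that repeated applications still leave $B_2$ absorbable), and that the derivative terms produced are of quantities that are \emph{nonnegative along the flow}, ensuring they may remain on the left-hand side under $\frac{d}{dt}$; nonnegativity of $-R=||\bT||^{2}$ from (\ref{2.26}) is what makes the last term in the bracket a legitimate part of a monotone-type quantity. With these signs and constants verified, the rest is mechanical substitution.
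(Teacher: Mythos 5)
Your proposal follows essentially the same route as the paper: you reproduce the derivation of (\ref{3.67}) from (\ref{3.32}) by the same integration-by-parts, Cauchy--Schwarz, and product-rule manipulations (with $p\ge5$ used exactly as in (\ref{3.59})--(\ref{3.60})), and then substitute it into Lemma \ref{l3.5} (i.e.\ (\ref{3.58})) and rearrange, which is precisely how the paper arrives at (\ref{3.68}) and Lemma \ref{l3.6}. The bookkeeping and the role of the nonnegativity of $-R$ are correctly identified, so this matches the paper's argument.
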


As in \cite{KMW2016, LY2018}, we choose the domain $\Omega:=
B_{g_{0}}(x_{0},\rho/\sqrt{K})$ and the function
\begin{equation*}
\eta=\left(\frac{\rho/\sqrt{K}-d_{g(0)}(x_{0},\cdot)}{\rho/\sqrt{K}}
\right)_{+}.
\end{equation*}
Then, for all $t\in[0,T]$,
\begin{equation*}
e^{-cK t}g_{0}\leq g(t)\leq e^{cK t}g_{0}, \ \ \
||\nabla_{g(t)}\phi||_{g(t)}\leq e^{cK T}||\nabla_{g_{0}}
\phi||_{g_{0}}\leq\frac{\sqrt{K}e^{cKT}}{\rho}.
\end{equation*}
{\it The proof of Theorem \ref{t1.4}.} Define
\begin{eqnarray}
U&:=&\int||{\rm Rm}||^{p}\phi^{2p}
dV
+cK\int||{\rm Rm}||^{p-1}
\eta^{2p}dV\nonumber\\
&&+ \ \frac{c}{K}\int||{\rm Rm}||^{p-1}
||{\rm Ric}||^{2}\eta^{2p}dV
+c\int(-R)||{\rm Rm}||^{p-1}
\eta^{2p}dV.\label{3.70}
\end{eqnarray}
Then (\ref{3.69}) yields
\begin{equation}
U'\leq cK U+cK A_{4}.\label{3.71}
\end{equation}
For $A_{4}$, using the Young inequality, we have
$$
A_{4} \ \ = \ \ \int||{\rm Rm}||^{p-1}
||\nabla\eta||^{2}\eta^{2p-2}dV \ \leq  \ \int_{B_{g_{0}}
(x_{0},\rho/\sqrt{K})}
||{\rm Rm}||^{p-1}\eta^{2p-2}K\rho^{-2}e^{cKT}dV
$$
$$
\leq \ \ \int_{B_{g_{0}}(x_{0},\rho/\sqrt{K})}
\left[\frac{(||{\rm Rm}||^{p-1}
\eta^{2p-2})^{p/(p-1)}}{\frac{p}{p-1}}
+\frac{(K\rho^{-2}e^{cKT})^{p}}{p}\right]dV
$$
$$
\leq \ \ A_{1}+K^{p}\rho^{-2p}p e^{cKT}
{\rm vol}_{g(t)}\left(B_{g_{0}}
\left(x_{0},\frac{\rho}{\sqrt{K}}\right)\right)
$$
$$
\leq \ \ U+cK^{p}e^{cKT}\rho^{-2p}
{\rm vol}_{g(t)}
\left(B_{g_{0}}\left(x_{0},\frac{\rho}{\sqrt{K}}\right)\right).
$$
Thus
\begin{equation*}
U'\leq cK U+cK^{p+1}e^{cKT}
\rho^{-2-p}{\rm vol}_{g(t)}
\left(B_{g_{0}}\left(x_{0},\frac{\rho}{\sqrt{K}}
\right)\right).
\end{equation*}
As in the proof of \cite{KMW2016}, one can easily deduce from above
that
$$
\int_{B_{g_{0}}(x_{0},\frac{\rho}{2\sqrt{K}})}
||{\rm Rm}_{g(t)}||^{p}_{g(t)}
dV_{g(t)} \ \ \leq \ \ c(1+K) e^{cKT}
\int_{B_{g_{0}}(x_{0},\frac{\rho}{\sqrt{K}})}
||{\rm Rm}_{g_{0}}||^{p}_{g_{0}}
dV_{g_{0}}
$$
\begin{equation}
+ \ c K^{p}\left(1+\rho^{-2p}\right)
e^{cK T}{\rm vol}_{g(t)}\left(B_{g_{0}}
\left(x_{0},\frac{\rho}{\sqrt{K}}\right)\right).\label{3.72}
\end{equation}

As an immediate consequence of the inequality (\ref{3.72}) we give
another proof of the part (a) in Theorem \ref{t1.2}.

%%%%%%%%%%%%%%%%%%%%%%%%%%%%%%%%%%%%%%%%%%%%%%%%%%%%%%%%%%%%%%%%%%%%%%%%%%%%%%

%%%%%%%%%%%%%%%%%%%%%%%%%%%%%%%%%%%%%%%%%%%%%%%%%%%%%%%%%%%%%%%%%%%%%%%%%%%%%%


\begin{thebibliography}{99}


\bibitem{Bryant2005} Bryant, Robert L. \textit{Some remarks on
$G_{2}$-structures}, Proceedings of G\"okova Geometry-Topology
Conference 2005, 75--09, G\"okova Geometry/Topology Conference (GGT),
G\"okova, 2006. MR2282011 (2007k: 53019)


\bibitem{Bryant-Xu2011} Bryant, Robert L.; Xu, Feng. \textit{Laplacian flow for closed $G_{2}$-structures: short time behavior}, arXiv: 1101.2004.

\bibitem{Cao2011} Cao, Xiaodong. \textit{Curvature pinching estimate and singularities of the Ricci flow}, Comm. Anal. Geom., {\bf 19}(2011), no. 5, 975--990. MR2886714

\bibitem{CZ2013} Cheng, Liang; Zhu, Anqiang. \textit{On the extension of the harmonic Ricci flow}, Geom. Dedicata, {\bf 164}(2013), 179--185. MR3054623

\bibitem{Cleyton-Ivanov2007} Cleyton, R.; Ivanov, S. \textit{On the geometry of closed $G_{2}$-structure}, Commun. Math. Phys., {\bf 270}(2007), no. 1, 53--67. MR2276440 (2007m: 53025)

\bibitem{D1} Donaldson, Simon. \textit{Adiabatic limits of co-associative Kovalev-Lefschetz
fibrations}, Algebra, geometry, and physics in the 21st century, 1--29, Progr. Math.,
{\bf 324}, Birkh\"auser/Springer, Cham, 2017. MR3702382

\bibitem{D2} Donaldson, Simon. \textit{Boundary value problems in dimension seven, four and
three related to exceptional holonomy}, arXiv: 1708.01649.

\bibitem{D3} Donaldson, Simon. \textit{An elliptic boundary value problem for $G_{2}$ structures},
arXiv: 1801.01806.

\bibitem{D4} Donaldson, Simon. \textit{Remarks on $G_{2}$-manifolds with boundary}, arXiv: 1802.09694.

\bibitem{EMT2011} Enders, Joerg; M\"uller Reto; Topping, Peter M. \textit{On
type-I singularities in Ricci flow}, Comm. Anal. Geom., {\bf 19}(2011), no. 5, 905--922. MR2886712

\bibitem{Fernandez-Gray1982} Fern\'andez, M.; Gray, A. \textit{Riemannian manifolds with structure group $G_{2}$}, Ann. Mat. Pura Appl., (4){\bf 132}(1982), 19--45. MR0696037 (84e: 53056)

\bibitem{FFM2015} Fern\'andez, Marisa; Fino, Anna; Manero, Victor. \textit{Laplacian
flow of closed $G_{2}$-structures inducing nilsoltons},
J. Geom. Anal., {\bf 26}(2016), no. 3, 1808--1837. MR3511459


\bibitem{FY2017} Fine, Joel; Yao, Chengjian. \textit{Hypersymplectic
$4$-manifolds, the $G_{2}$-Laplacian flow and extension assuming
bounded scalar curvature}, arXiv: 1704.07620

\bibitem{Grigorian2013} Grigorian, Sergey. \textit{Short-time behavior of a modified Laplacian coflow of $G_{2}$-structures}, Adv. Math., {\bf 248}(2013), 378--415. MR3107516

\bibitem{Grigorian2016} Grigorian, Sergey. \textit{Modified Laplacian coflow
of $G_{2}$-structures on manifolds with symmetry}, Differential Geom. Appl., {\bf 46}(2016), 39--78. MR3475531

\bibitem{Hamilton82} Hamilton, Richard. \textit{Three-manifolds with positive
Ricci curvature}, J. Differential Geom., {\bf 17}(1982), no. 2,
255--306. MR0664497 (84a: 53050)

\bibitem{Hitchin2000} Hitchin, N. \textit{The geometry of three-forms in
six dimensions}, J. Differential Geom., {\bf 55}(2000), no. 3, 547--576.
MR1863733 (2002m: 53070)

\bibitem{HWY2017} Huang, Hongnian; Wang, Yuanqi; Yao, Chengjian. \textit{Cohomogeneity-one $G_{2}$-Laplacian
flow on $7$-torus}, to appear in Journal of the London Mathematical Society.

\bibitem{Karigiannis2003} Karigiannis, Sprio. \textit{Deformations of $G_{2}$ and ${\rm Spin}(7)$-structures on manifolds}, Ph. D. Thesis, Harvard Univesity, 2003.

\bibitem{Karigiannis2005} Karigiannis, Sprio. \textit{Deformations of $G_{2}$ and ${\rm Spin}(7)$-structures on manifolds}, Canad. J. Math., {\bf 57}(2005), no. 5,
    1012--1055. MR2164593 (2006j: 53070)


\bibitem{Karigiannis2006} Karigiannis, Spiro. \textit{Some notes on $G_{2}$ and ${\rm Spin}(7)$ geometry}, Recent Advances in Geometric Analysis, 129--146,
    Adv. Lect. Math. (ALM), {\bf 11}, Internatial Press, Somerville, MA, 2010.
    MR2648941 (2011f: 53094)

\bibitem{Karigiannis2007} Karigiannis, Sprio. \textit{Flows of $G_{2}$
structures, I}, Q. J. Math., {\bf 60}(2009), no. 4, 487--522. MR2559631 (2011g:
53046)

\bibitem{KMT2012} Karigiannis, Spiro; McKay, Ben; Tsui, Mao-Pei. \textit{Soliton solutions for the Laplacian coflow of some $G_{2}$-structures with symmetry},
    Diff. Geom. Appl., {\bf 30}(2012), no. 4, 318--333. MR2926272

\bibitem{KMW2016} Kotschwar, Brett; Munteanu, Ovidiu; Wang, Jiaping.
\textit{A local curvature estimate for the Ricci flow}, J. Funct. Anal., {\bf 271}(2016), no. 9, 2604--2630. MR3545226

\bibitem{LY2} Li, Yi. \textit{Long time existence of Ricci-harmonic flow}, Front. Math. China, {\bf 11}(2016), no. 5, 1313--1334. MR3547931


\bibitem{LY3} Li, Yi. \textit{Long time existence and bounded scalar
curvature in the Ricci-harmonic flow},
J. Differential Equations, {\bf 265}(2018), 69--97. MR3782539


\bibitem{LY2018} Li, Yi. \textit{Local curvature estimates for Ricci-harmonic flow}, preprint, 2017. In prepartion of the second revised version.

\bibitem{Lin2013} Lin, Christopher. \textit{Laplacian solitons and $G_{2}$-geometry},
J. Geom. Phys., {\bf 64}(2013), 111-119. MR3004019


\bibitem{List2005} List, Bernhard. \textit{Evolution of an extended Ricci flow system}, PhD thesis, AEI Potsdam, 2005.


\bibitem{List2008} List, Bernhard. \textit{Evolution of an extended Ricci flow system}, Comm.
Anal. Geom., {\bf 16}(2008), no. 5, 1007--1048. MR2471366 (2010i: 53126)


\bibitem{Muller2009} M\"uller, Reto. \textit{The Ricci flow coupled with harmonic map flow}, PhD thesis, ETH Z\"urich, doi: 10.3929/ethz-a-005842361, 2009.


\bibitem{Muller2012} M\"uller, Reto. \textit{Ricci flow coupled with harmonic map flow}, Ann.
Sci. \'Ec. Norm. Sup\'er. (4) {\bf 45}(2012), no. 1, 101--142. MR2961788


\bibitem{LW2017} Lotay, Jason, D; Wei, Yong. \textit{Laplacian flow for closed $G_{2}$-structures: Shi-type estimates, uniqueness and compactness}, Geom. Funct. Anal.,
    {\bf 27}(2017), no. 1, 165--233. MR3613456

\bibitem{LW2019} Lotay, Jason, D; Wei, Yong. \textit{Stability of torsion-free $G_{2}$-structures along the Laplacian flow}, to appesr in J. Differental Geom.

\bibitem{LW2020} Lotay, Jason, D; Wei, Yong. \textit{Laplacian flow for closed $G_{2}$-structures: real analyticity}, to appear in Comm.
    Anal. Geom.

\bibitem{Sesum2005} Sesum, Natasa. \textit{Curvature tensor under the Ricci flow}, Amer.
J. Math., {\bf 127}(2005), no. 6, 1315--1324. MR2183526 (2006f:53097)

\bibitem{WW2012a} Weiss, Hartmut; Witt, Frederik. \textit{A heat flow for special
metrics}, Adv. Math., {\bf 231}(2012), no. 6, 3288--3322. MR2980500


\bibitem{WW2012b} Weiss, Hartmut; Witt, Frederik. \textit{Energy functionals and soliton equations for $G_{2}$-forms}, Ann. Glob. Anal. Geom., {\bf 42}(2012), no. 4, 585--610.
    MR2995206

\bibitem{Zhang2010} Zhang, Zhou. \textit{Scalar curvature behavior
for finite-time singularity of K\"ahler-Ricci flow}, Michigan Math., {\bf 59}(2010),
no. 2, 419--433. MR2677630 (2011j: 53128)

\end{thebibliography}
\end{document}